\documentclass[10pt,a4paper]{amsart}

\usepackage{amssymb}
\usepackage[utf8]{inputenc}
\usepackage[english]{babel}
\usepackage{amsfonts,mathtools}
\usepackage{amsmath}
\usepackage{comment}
\usepackage{transparent}
\usepackage{graphicx}
\usepackage{xcolor}
\usepackage[normalem]{ulem}
\usepackage{color}
\usepackage{upgreek}

\DeclareFontFamily{U}{mathx}{\hyphenchar\font45}
\DeclareFontShape{U}{mathx}{m}{n}{
      <5> <6> <7> <8> <9> <10>
      <10.95> <12> <14.4> <17.28> <20.74> <24.88>
      mathx10
      }{}
\DeclareSymbolFont{mathx}{U}{mathx}{m}{n}
\DeclareFontSubstitution{U}{mathx}{m}{n}
\DeclareMathAccent{\widecheck}{0}{mathx}{"71}
\DeclareMathAccent{\wideparen}{0}{mathx}{"75}

\usepackage[bookmarksopen,bookmarksdepth=3,colorlinks,citecolor=red,pagebackref,hypertexnames=false]{hyperref}
\usepackage{amsthm}
\usepackage[nameinlink]{cleveref}

\usepackage{enumitem}
\usepackage{mathabx}

\newtheorem{main-theorem}{Theorem}
\newtheorem{proposition}{Proposition}[section]
\newtheorem{theorem}[proposition]{Theorem}

\newtheorem{lemma}[proposition]{Lemma}
\theoremstyle{remark}
\newtheorem{remark}[proposition]{Remark}

\theoremstyle{definition}

\newtheorem*{acknowledgements}{Acknowledgements}

\DeclareMathOperator{\supp}{supp}

\DeclareMathOperator*{\esssup}{ess\,sup}

\newcommand{\R}{\mathbb{R}}
\newcommand{\C}{\mathbb{C}}

\newcommand{\N}{\mathbb{N}}
\newcommand{\Sph}{\mathbb{S}}
\newcommand{\dd}{\mathrm{d}}
\newcommand{\tm}{\mathrm{t}}
\newcommand{\x}{\mathrm{x}}
\newcommand{\Id}{\mathrm{Id}}

\newcommand*{\defeq}{\mathrel{\vcenter{\baselineskip0.5ex \lineskiplimit0pt
			\hbox{\scriptsize.}\hbox{\scriptsize.}}}%
	=}

\newcommand{\X}{\mathrm{X}}
\newcommand{\dist}{\mathrm{dist}}
\newcommand{\om}{\omega}

\def\Im{\mathop\mathrm{Im}}
\def\Re{\mathop\mathrm{Re}}
\def\loc{\mathop\mathrm{loc}}
\def\ve{\varepsilon}
\def\lec{\lesssim}
\def\vp{\varphi}
\def\wh{\widehat}

\def\cal{\mathcal}
\def\ovl{\overline}
\def\cd{\centerdot}
\def\wt{\widetilde}
\def\d{\partial}
\def\bs{\boldsymbol}

\title[Stability initial to final]{Stability estimates for the initial-to-final-state inverse problem}

\author[M. Cañizares]{Manuel Cañizares}
\address[M.\ Cañizares]{ RICAM -The Johann Radon Institute for Computational and Applied Mathematics, 
Altenberger Str. 69, 4040 Linz, Austria,}
\email{\href{mailto:manuel.canizares@ricam.oeaw.ac.at}{\textrm{manuel.canizares@ricam.oeaw.ac.at}}}

\author[T. Zacharopoulos]{Thanasis Zacharopoulos}
\address[T.\ Zacharopoulos]{Department of Mathematics, Aarhus University, NY Munkegade 118, 8000 Aarhus C, Denmark}
\email{\href{mailto:thanzacharop@math.au.dk}{\textrm{thanzacharop@math.au.dk}}}

\begin{document}

\begin{abstract}
The initial-to-final-state inverse problem for the Schr\"odinger equation consists in determining uniquely the Hamiltonian that generates the evolution, assuming the knowledge of the initial-to-final-state map. 
This functional maps each initial state $f\in L^2(\R^n)$ to the corresponding final state at a fixed time $T$. The problem was formulated by Caro and Ruiz in the case of Hamiltonians arising from time-dependent bounded electric potentials that exhibit super-exponential decay at infinity. Caro, Parissis and the authors of this article established that uniqueness also holds for time-independent bounded potentials with super-linear decay at infinity. 

In this paper, we quantify the above uniqueness results establishing that the potentials are stable under small changes of the initial-to-final-state maps. In the case of time-dependent potentials we get a stability of logarithmic type. A notable improvement is achieved when the potentials are time-independent, where under this assumption we prove H\"older stability estimates.
\end{abstract}

\date{\today}

\subjclass[2020]{Primary 35R30; Secondary 35J10, 81U40.}
\keywords{Initial-to-final-state map, Schr\"odinger equation, uniqueness, stability, distance, inverse problems}

\maketitle

%%%%%%%%%% new section %%%%%%%%%% new section %%%%%%%%%% new section

\section{Introduction}

We consider the initial-value problem for the Schr\"odinger equation with an electric potential
\begin{equation}\label{eq: IVP_Schrodinger}
	\left\{
		\begin{aligned}
		& i\partial_\tm u = - \Delta u + V u & & \textnormal{in} \enspace (0,T) \times \R^n, \\
		& u(0, \centerdot) = f &  & \textnormal{in} \enspace \R^n.
		\end{aligned}
	\right.
\end{equation}
It is a well known fact that, if $V \in L^1((0,T); L^\infty(\R^n))$, then for every $f \in L^2(\R^n)$
there exists a unique solution 
$u \in C([0,T]; L^2(\R^n))$ to \eqref{eq: IVP_Schrodinger}. This can be seen as a particular case of a much stronger result proved in \cite{zbMATH02204588}.
Moreover, the linear map 
$ f \in L^2(\R^n) \mapsto u \in C([0, T]; L^2(\R^n)) $ is bounded.
Such solutions will be referred to as \emph{physical solutions}.

Caro and Ruiz formulated in \cite{zbMATH07801151} an
inverse problem consisting in determining the electric potential $V:[0,T]\times\R^n\to\C$ from data 
measured only at the initial and final times. 
These data were quantified by the \textit{initial-to-final-state map} operator, defined over the physical solutions $u$ of the problem \eqref{eq: IVP_Schrodinger}, as
\[
\mathcal{U}_T : f \in L^2(\R^n) \mapsto u(T, \centerdot) \in L^2(\R^n),
\]
which is a bounded operator in $L^2 (\R^n)$.
In {\cite{zbMATH07801151}}, Caro and Ruiz proved that $\mathcal{U}_T$ uniquely determines the potential $V = V(\tm, \x)$
in dimension $n \geq 2$ 
whenever $V \in L^1((0, T); L^\infty (\R^n))$ has super-exponential decay; that is,
$ e^{\rho |\x|} V \in L^\infty((0, T) \times \R^n) $ for all $\rho > 0$.
The necessity of the super-exponential decay condition arises from the use of a family of solutions, usually called complex 
geometrical optics (CGO) solutions, that grow exponentially at infinity. 
In \cite{arXiv:2512.04796}, they extended this uniqueness result to unbounded time-dependent potentials with local $L^q$-type singularities, but still under the super-exponential decay at infinity assumption. 

Under the assumption of a time-independent and bounded potential with super-linear decay at infinity, Caro, Parissis and the authors of this article 
recovered in \cite{zbMATH08122191} the above uniqueness result; see subsection \ref{subsec:potential_decay} for the definition of super-linear decay. 
The time-independence of the potential allowed us to completely avoid CGO solutions by constructing suitable time-harmonic solutions. 
More recently, this uniqueness result was extended in \cite{arXiv:2602.12122} to unbounded time-independent potentials that may exhibit $L^q$-type singularities and requiring only an $L^1$-type decay at infinity.

Building on the works \cite{zbMATH08122191} and \cite{zbMATH07801151}, a natural direction is to quantify these uniqueness results, and investigate how stable could a potential reconstruction be, under controlled errors on the initial-to-final-state measurements. The usual starting point for analytical inverse problems in PDEs consists in relating the measurement data to the unknown quantity via an identity of, in this case, the form
\begin{equation}\label{eq: identity}
	i \int_{\R^n} (\mathcal{U}_T^1 - \mathcal{U}_T^2) f \, \overline{g} \; = \int_\Sigma (V_1 - V_2)u_1 \overline{u_2}\,,
\end{equation}
for $u_1$ and $u_2$ physical solutions of \eqref{eq: IVP_Schrodinger} associated with initial data $f$ and final data $g$, respectively. This identity was proven in \cite[Proposition 4.1]{zbMATH07801151}. Above and from now on we denote the unbounded strip $\Sigma \defeq (0, T) \times \R^n$ with $n\geq 2$.

Whenever the data for two potentials are identical, the LHS will be zero, and thus so is the RHS. Probing this right hand side with a special choice of solutions allows to recover the Fourier transform of $V_1-V_2$, and show uniqueness by the Fourier inversion theorem.

To obtain stability estimates, one may use the same identity to estimate the size of this Fourier transform in terms of the proximity between $\mathcal U_T^1$ and $\mathcal U_T^2$. The Fourier transform of $V_1-V_2$ in the RHS of \eqref{eq: identity} can be extracted by plugging in the same kind of special solutions. Then we have to estimate its size in the LHS by the operator norm of the measurements, controlling the growth of these solutions.

% The decay of the potential allows us to extend the RHS for these special solutions. However, it is not clear how to make sense of the LHS for this larger class of solutions, since the initial-to-final state map would in principle become unbounded for initial data that does not belong to $L^2(\R^n)$.

%
Ultimately, we would be interested in an estimate of the form 
\begin{equation}\label{eq: stab_est_nat}
\|V_1-V_2\|_{\X} \leq \upomega \big( \|\cal{U}_T^1 - \cal{U}_T^2 \|_{\mathcal Y}\big),
\end{equation}
where $\upomega: [0, \infty) \to [0, \infty)$ is a modulus of continuity and $\X$ and $\mathcal Y$ are Banach spaces to be specified. 
%See subsection \ref{subsec:def} for the definition of $\cal{L}(L^2(\R^n))$.
 
Even though $\| \cdot \|_{\cal Y} = \| \cdot \|_{\cal{L}(L^2(\R^n))} $ would be the natural operator norm for the map $\mathcal{U}_T$, proving a stability estimate like \eqref{eq: stab_est_nat} seems to be highly non-trivial. 
The main obstacle is that the aforementioned solutions that are used in \cite{zbMATH08122191} and \cite{zbMATH07801151} to obtain uniqueness are not physical; they present a growth at infinity in the space variable. This complicates the arising of the norm $\|\cal{U}_T^1 - \cal{U}_T^2 \|_{\cal{L}(L^2(\R^n))}$ in the final estimates. Although the decay of the potential allows us to extend the RHS of \eqref{eq: identity} for these special solutions, it is not clear how to make sense of the LHS for this larger class of solutions, since the initial-to-final state map would in principle become unbounded for initial data that do not belong to $L^2(\R^n)$.   
%In the former case they are stationary states that present a growth of polynomial type at infinity, while in the latter case CGO solutions are used which can grow exponentially in some directions of $\R^n$.  

To deal with this problem we introduce a quantity, $\dist_w(\cdot, \cdot): \cal{L}(L^2(\R^n))\times\cal{L}(L^2(\R^n)) \to\R_+$, as 
\[
\dist_w(\cal{U}_T^1, \cal{U}_T^2) \defeq
\sup_{\substack{u_1 \in \cal{P}_{V_1}\\ u_2 \in \cal{P}_{\overline{V}_2}}} \frac{\Big| \displaystyle \int_\Sigma (\cal{U}_T^1-\cal{U}_T^2) u_1(0, \cd) \ovl{u_2(T, \cd)}\Big|}{\|u_1\|_{L^2_w(\Sigma)} \| u_2\|_{L^2_w(\Sigma)}},
\]
where $\|\centerdot\|_{L^2_w(\Sigma)}$ are suitable weighted $L^2$ spaces in which the growth of the necessary special solutions can be estimated, see section \ref{subsec: spaces} for details. Here $\cal{P}_V$, see definition \eqref{eq: space_physical_solutions}, denotes the space of physical solutions corresponding to the potential $V$.
We prove in Proposition \ref{prop: distance} that this distance becomes a quasi-metric on a subset of $\cal{L}(L^2(\R^n))$, under an a priori assumption for the potentials.
%see \ref{subsec:def} for the definitions of distance and quasi-metric. 
For real potentials and in the case of $w\equiv 1$ (i.e. for $L^2_w = L^2$), it holds that $\dist_w(\cal{U}_T^1, \cal{U}_T^2)=\|\cal{U}_T^1 - \cal{U}_T^2\|_{\cal{L}(L^2(\R^n))}$. In general, for weights $0<w(x)\leq1$, we have that
\[
\|\cal{U}_T^1 - \cal{U}_T^2\|_{\cal{L}(L^2(\R^n))} \lec \dist_w(\cal{U}_T^1, \cal{U}_T^2).
\]
In the sequel, we may denote the distance as $\dist_\delta$ or $\dist_{\delta,\ve}$, whenever the weight $w$ has a dependence in either $\delta$ or $\delta$ and $\ve$, to avoid the cluttering of notation.

We want this distance to be bounded, which is guaranteed by requiring slightly stronger decay assumptions for the potentials than the ones in \cite{zbMATH08122191} and \cite{zbMATH07801151}. 
To be more specific, for time-independent potentials $V\in L^1(\R^n)\cap L^\infty(\R^n)$ and for any $\delta>1/2$, we assume the following decay 
\begin{equation}\label{eq: potential_decay}
\| V \|_\delta \defeq  \| \langle \cdot \rangle^{2\delta} V\|_{L^\infty(\R^n)} < \infty,
\end{equation}
where we denote $\langle x \rangle \defeq (1+|x|^2)^{1/2}$. 
 %

%%%%%% 
For the same reason, in the case of time-dependent $V\in L^1((0, T); L^\infty(\R^n))$, we suppose that
\begin{equation}\label{eq: super_exponential}
		\| V \|_{\delta,\ve} \defeq \|e^{2|\cdot|^{1+\ve}}\langle \cdot \rangle^{2\delta}V \|_{L^\infty((0,T)\times \R^n)} <\infty,
\end{equation}
for any $\delta>1/2$ and $\ve>0$, which at infinity implies the super-exponential decay.
Furthermore, in the case of time-independent potentials, an extra integrability condition naturally arises through the estimates in the proof of the forthcoming Theorem \ref{th: stability_t_independent}. That is, 
\begin{equation}\label{eq: int_cond_V}
\int_{\R^n} |x| |V(x)| \, \dd x < \infty. 
\end{equation}

We now present the main theorems of this article. Denote with $\Im V$ the imaginary part of $V$. In the case of time-independent potentials, as A. Ruiz conjectured, we get a H\"older modulus of continuity: 
%%%%%%%THEOREM%%%%%%%%THEOREM%%%%%%%%%THEOREM%%%%%%%%
\begin{theorem}\label{th: stability_t_independent}
Let $0<T<\infty$. Consider two time-independent potentials $V_1, V_2 \in L^1(\R^n) \cap L^\infty(\R^n)$ with $n\geq 2$, $\| V_j \|_\delta< \infty$ and $\| \Im V_j \|_\delta \leq \frac{1}{4T}$ for $j\in\{1, 2\}$, satisfying the a priori assumption $\| V_1 - V_2\|_\delta \leq R_0$ for some fixed constant $0<R_0<\infty$. 
Assume also that $V_1$ and $V_2$ meet the integrability condition \eqref{eq: int_cond_V}.

Let $\cal{U}_T^1$, $\cal{U}_T^2$ denote the corresponding initial-to-final-state maps. Then, providing that $ \dist_{\delta} (\cal{U}_T^1, \cal{U}_T^2)$ is sufficiently small, for any $\delta>1/2$ there holds
\begin{equation}\label{eq: stability1_est_t_independent}
\|V_1 - V_2\|_{H^{-1}(\R^n)} \lec \dist_{\delta} (\cal{U}_T^1, \cal{U}_T^2)^{\frac{2}{n(n+2)}}.
\end{equation}
Additionally, for $\delta>n/2$, the following improved stability estimate holds
\begin{equation}\label{eq: stability2_est_t_independent}
\|V_1 - V_2\|_{H^{-1}(\R^n)} \lec \dist_{\delta} (\cal{U}_T^1, \cal{U}_T^2)^{\frac{1}{n}}. 
\end{equation}
\end{theorem}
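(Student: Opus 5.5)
We will follow the uniqueness argument of \cite{zbMATH08122191}, making each step quantitative. Since the potentials do not depend on time, we use time-harmonic solutions $u_j(t,x) = e^{-i\lambda t} v_j(x)$. Here $v_1$ solves $(-\Delta + V_1 - \lambda) v_1 = 0$ and $v_2$ solves $(-\Delta + \overline{V_2} - \lambda) v_2 = 0$ in $\R^n$, with $\lambda = k^2 > 0$. We build them as Lippmann--Schwinger perturbations of plane waves: $v_j = e^{i k\omega_j\cdot x} + r_j$. The remainders are $r_j = -R_0(\lambda \pm i0)(V_j v_j)$, where $R_0(\lambda\pm i0)$ is the free outgoing/incoming resolvent.

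The first step is to invoke the limiting absorption principle (Agmon's estimate) in the weighted spaces $L^2_{\delta}$. It gives $\|\langle x\rangle^{-\delta} R_0(\lambda\pm i0)\langle x\rangle^{-\delta}\|\lesssim \lambda^{-1/2}$ for $\delta>1/2$. Together with $\|V_j\|_\delta<\infty$, this shows that the Neumann series converges once $k \gtrsim \|V_j\|_\delta$. We then get $\|\langle x\rangle^{-\delta} r_j\|_{L^2}\lesssim k^{-1}\|V_j\|_\delta$, and, with the integrability condition \eqref{eq: int_cond_V}, control of $\int |V_j|\,|r_j|$. The plane waves have $\|u_j\|_{L^2_w(\Sigma)} \lesssim T^{1/2}$ when $w=\langle x\rangle^{-2\delta}$. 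The bound $\|\Im V_j\|_\delta\le 1/(4T)$ is what lets these stationary states be approximated by genuine physical solutions in $\mathcal P_{V_1}$, $\mathcal P_{\overline V_2}$, with control in $L^2_w(\Sigma)$. We intend to do this by cutting off in space and using the evolution, with the imaginary part producing at most a growth factor $e^{T\|\Im V\|}$.

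Next we plug these solutions into the identity \eqref{eq: identity} and the definition of $\dist_\delta$. The time integral contributes a factor $T$, because $u_1\overline{u_2}$ is independent of $t$. Writing $\xi = k(\omega_1-\omega_2)$ and choosing $\omega_1,\omega_2\in\mathbb S^{n-1}$ so that $|\xi|\le 2k$ (this needs $n\ge 2$), we obtain
\[
T|\widehat{(V_1-V_2)}(\xi)| \lesssim \dist_\delta(\mathcal U_T^1,\mathcal U_T^2)\,\|u_1\|_{L^2_w}\|u_2\|_{L^2_w} + \frac{R_0 C}{k}.
\]
The second term collects the contributions of $r_1$, $r_2$ and $r_1\overline{r_2}$. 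So for $|\xi|\le 2k$ we have $|\widehat{V_1-V_2}(\xi)|\lesssim \dist_\delta + k^{-1}$, with constants uniform in $\xi$.

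Finally we estimate the $H^{-1}$ norm by splitting frequencies at a radius $\rho$:
\[
\|V_1-V_2\|_{H^{-1}}^2 \le \int_{|\xi|<\rho}\frac{|\widehat{(V_1-V_2)}|^2}{\langle\xi\rangle^2} + \rho^{-2}\|V_1-V_2\|_{L^2}^2 .
\]
Using the previous bound with $k\sim\rho$, the low-frequency part is at most $\rho^{n}(\dist+\rho^{-1})^2$ up to constants. The $\rho^{-1}$ error then competes with the $\rho^n$ volume factor, so the remainder estimate must be sharpened. For $\delta>1/2$ we plan to average over $k\ge\rho$: for fixed $\xi$, the freedom to choose $k$ large gives error $k^{-1}$ with $k$ free. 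We optimize $\dist\cdot k^{a}$ (the growth of the solution norms, and the dependence of the resolvent constants on $k$) against $k^{-1}$, and then balance against $\rho^{-2}$. This should produce an exponent of the form $\frac{2}{n(n+2)}$. When $\delta>n/2$, the weight $\langle x\rangle^{-2\delta}$ is integrable, so the plane waves lie in $L^2_w$ uniformly in $k$. The $k$-loss then disappears, and balancing $\rho^n\dist^2$ against $\rho^{-2}$ gives $\dist^{2/(n+2)}$, or a $1/n$ rate after the $k\to\infty$ limit kills the remainder. Smallness of $\dist$ is needed so that the optimal $\rho$ exceeds the threshold $k\gtrsim\|V\|_\delta$.

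The main obstacle is the second step: producing genuine physical solutions, which are in $L^2$ and belong to $\mathcal P_V$, that approximate the non-$L^2$ time-harmonic states in the weighted norm. The supremum in $\dist_\delta$ ranges only over physical solutions. We will first try to exploit density of $\mathcal P_V$ in the weighted closure, proving that the quotient defining $\dist_\delta$ extends by continuity to solutions with finite $L^2_w(\Sigma)$ norm.
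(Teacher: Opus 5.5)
\textbf{Gap in the first estimate.} Your plan fails for \eqref{eq: stability1_est_t_independent} exactly when $1/2<\delta\le n/2$; for $\delta>n/2$ this estimate follows from the second one. Since $|e^{ik\om\cdot x}|=1$, you have $\|e^{-ik^2t}e^{ik\om\cdot x}\|^2_{L^2_{w_\delta}(\Sigma)}=T\int_{\R^n}\langle x\rangle^{-2\delta}\,\dd x=\infty$ for $\delta\le n/2$. So your claim $\|u_j\|_{L^2_w(\Sigma)}\lec T^{1/2}$ is false there, the distance term in your Fourier bound is infinite, and there is no finite ``$k^a$'' growth to optimise against $k^{-1}$. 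Superposing plane waves over $k$ only creates decay along $\om$, not transversally, and the exponent $\frac{2}{n(n+2)}$ is asserted rather than derived.

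The missing idea is the paper's. Replace plane waves by Herglotz waves $E_\lambda f^{\om_j}_\ve$ whose densities are concentrated, with parabolic scaling, in caps of radius $\ve$ around $\om_j$. These lie in $B^*(\R^n)\subset L^2_{-\delta}(\R^n)$ for every $\delta>1/2$, with $\|E_\lambda f^{\om_j}_\ve\|_{B^*}\lec\lambda^{-(n-1)/2}\|f^{\om_j}_\ve\|_{L^2(\Sph^{n-1})}\lec(\lambda\ve)^{-(n-1)/2}$, and the perturbations are also controlled in $B^*$. The price is that the main term is no longer $\wh F(\kappa)$ but an average of $\wh F(\lambda(\theta+\om))$ over the two caps. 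Its deviation from $\wh F(\kappa)$ is the modulus $\gamma(\lambda\ve)$, and this is where \eqref{eq: int_cond_V} really enters, giving $\gamma(\lambda\ve)\lec\lambda\ve$. The outcome is $|\wh F(\kappa)|\lec\dist_\delta\,(\lambda\ve)^{-(n-1)}+\lambda\ve+\lambda^{-n}\ve^{-(n-1)}$. The choices $\ve=\lambda^{-1-1/n}$, $\rho=\lambda^{2/(n(n+2))}$, $\lambda=\dist_\delta^{-1}$ then give the exponent $\frac{2}{n(n+2)}$.

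\textbf{The case $\delta>n/2$ and the density step.} For $\delta>n/2$ your plane-wave route is correct and simpler than the paper's. You have $V_j\in L^2_\delta$, because $\langle x\rangle^{2\delta}|V_j|^2\le\|V_j\|_\delta|V_j|$, and Agmon's bound gives remainders of size $k^{-1}$ in $L^2_{-\delta}$. There is no averaging error, so \eqref{eq: int_cond_V} is not even needed. Since all constants are uniform in $k$, letting $k\to\infty$ at fixed $\xi$ gives $|\wh F(\xi)|\lec\dist_\delta$. Taking $\rho=\dist_\delta^{-2/(n+2)}$ then yields $\|V_1-V_2\|_{H^{-1}}\lec\dist_\delta^{2/(n+2)}\le\dist_\delta^{1/n}$ for $n\ge2$ and $\dist_\delta<1$; your sentence ``$\dist_\delta^{2/(n+2)}$, or a $1/n$ rate'' should say this.

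What your proposal leaves unproved is the step you call the main obstacle: extending the quotient in $\dist_\delta$ to non-$L^2$ solutions. The paper does this with the Runge-type Lemma~\ref{lem: density_of_solutions_t_independent}, proved by duality. For $v\perp\mathcal{P}$ in $L^2_{w_\delta}(\Sigma)$ one solves the backward problem with source $w_\delta v$, shows $\Phi(0,\cdot)=0$, and integrates by parts against solutions in $C([0,T];B^*)$. A spatial cutoff can also work for $B^*$-solutions, since with $B^*$ control of $\nabla\psi$ the commutator $[\Delta,\chi_R]\psi$ is $O(R^{-1/2})$ in $L^2$. It does not work directly for plane waves, whose commutator does not decay in $L^2$. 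You would first have to pass through $B^*$-solutions; Herglotz approximations of a plane wave converge to it in $L^2_{-\delta}$ when $\delta>n/2$. Finally, the hypothesis $\|\Im V_j\|_\delta\le 1/(4T)$ plays no role in the density step; in the paper it is used only for the quasi-triangle inequality.
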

%%%%%%%%%%%%

In the case of time-dependent potentials, we obtain a stability result of logarithmic type:

%%%%%%%THEOREM%%%%%%%%THEOREM%%%%%%%%%THEOREM%%%%%%%%
\begin{theorem}\label{th: stability_t_dependent}
	Let $0<T<\infty$ and $n\geq 2$. Consider two time-dependent potentials $V_1, V_2 \in L^1((0,T);L^\infty(\R^n))$ with decay of the form $\|V_j\|_{\delta, \ve}< \infty$ and $\|\Im V_j\|_{\delta, \ve}\leq \frac{1}{4T} $ for $j\in\{1, 2\}$. Assume also that the potentials satisfy the a priori assumption $\|V_1 - V_2\|_{\delta, \ve} \leq R_0$ for some fixed constant $0<R_0<\infty$.
	 
	Let $\cal{U}_T^1$, $\cal{U}_T^2$ denote the corresponding initial-to-final-state maps. Then, for any $\delta >1/2$ and $ \varepsilon>0$, there holds 
	\begin{equation}\label{eq: stability1_est_t_dependent}
	\|V_1 - V_2\|_{H^{-1}(\R^n)} \lesssim \omega_\ve(\dist_{\delta,\varepsilon} (\cal{U}_T^1, \cal{U}_T^2)),
	\end{equation}
	under the condition that $\dist_{\delta,\varepsilon} (\cal{U}_T^1, \cal{U}_T^2)$ is sufficiently small. Here $\omega_\ve$ is a modulus of continuity satisfying that
	\[
	\omega_\ve(t)\leq\left(\frac{|\log(t)|}{5\varepsilon}\right)^{\frac{-\varepsilon}{n(n+3)(1+\varepsilon)}},\quad 0<t<1/e.
	\]
\end{theorem}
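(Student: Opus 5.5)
We prove Theorem \ref{th: stability_t_dependent} with the CGO strategy of Caro and Ruiz, tracking every constant. The estimate splits into two pieces: a bound for the space-time Fourier transform of $V \defeq V_1 - V_2$ (extended by zero outside $(0,T)$) on a ball of frequencies, and an $H^{-1}$ tail bound. The $H^{-1}$ norm in the statement is read as the norm of the $x$-Fourier transform, i.e.\ the (time-averaged) frequency $(0,\xi)$ information; in fact we bound $\widehat V(\tau,\xi)$ for all low $(\tau,\xi)$.

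\textbf{Step 1 (CGO solutions).} For $\xi\in\R^n$ and a large parameter $\rho$, choose $\eta\perp\xi$ and construct solutions of the form
\[
u_j = e^{\pm\rho\eta\cdot x + i(\cdots)}\,(1+r_j), \qquad j=1,2,
\]
solving the equations with $V_1$ and $\overline V_2$ respectively. Their product is $u_1\overline{u_2} = e^{-i(t\tau + x\cdot\xi)}(1+r_1)(1+\overline r_2)$. The remainders $r_j$ are small in a weighted space: by a Fourier multiplier (Bourgain-type space) estimate,
\[
\|r_j\| \lesssim \rho^{-1}\,\|e^{\rho|x|}V_j\|.
\]
This is where super-exponential decay enters, quantified through the growth $e^{2|x|^{1+\ve}}$. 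Optimizing $e^{\rho|x| - 2|x|^{1+\ve}}$ over $x$ gives a factor of order $\exp\big(c\,\rho^{(1+\ve)/\ve}\big)$.

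\textbf{Step 2 (using the distance).} These CGO solutions are not physical, so we cannot feed them into \eqref{eq: identity} directly. Instead we approximate them by physical solutions, using the weighted structure: in $L^2_w(\Sigma)$ with $w$ decaying like $e^{-2|x|^{1+\ve}}\langle x\rangle^{-2\delta}$, the CGO solutions have finite norm. Truncating the initial data and comparing via Duhamel (here $\|\Im V_j\|\le 1/(4T)$ keeps the evolutions uniformly bounded), \eqref{eq: identity} together with the definition of $\dist_{\delta,\ve}$ yields
\[
|\widehat V(\tau,\xi)| \lesssim e^{C\rho^{(1+\ve)/\ve}}\,\dist_{\delta,\ve} \;+\; \frac{R_0}{\rho}\,\bigl(1+|\xi|+|\tau|\bigr).
\]
Making this approximation rigorous is the main obstacle. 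If the definition of $\dist$ as a supremum over physical solutions does not pass to weighted limits, I would instead redefine the admissible class by density, arguing that physical solutions are dense in the relevant weighted solution space.

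\textbf{Step 3 (splitting and optimization).} Split
\[
\|V\|_{H^{-1}}^2 = \int_{|\xi|<r} \frac{|\widehat V|^2}{\langle\xi\rangle^{2}} + \int_{|\xi|\ge r} \frac{|\widehat V|^2}{\langle\xi\rangle^{2}}.
\]
The high-frequency part is at most $R_0^2\, r^{-2}$. The low-frequency part is at most $r^{n}$ times the square of the Step 2 bound; the extra power of $n$ in the final exponent $n(n+3)$ comes from also integrating over $\tau$, or from interpolation. Now choose
\[
\rho \sim \bigl(\varepsilon\,|\log \dist|/5\bigr)^{\ve/(1+\ve)},
\]
so that the term $e^{C\rho^{(1+\ve)/\ve}}\dist$ is at most $\dist^{1/2}$. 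Then take $r$ a power of $\rho$ to balance $r^{n+2}/\rho^{2}$ against $r^{-2}$. This produces a bound of the form
\[
\rho^{-c/(n(n+3))} = \left(\frac{|\log \dist|}{5\varepsilon}\right)^{-\frac{\varepsilon}{n(n+3)(1+\varepsilon)}},
\]
which is the claimed modulus $\omega_\ve$. The smallness requirement on $\dist_{\delta,\varepsilon}$ is exactly what is needed for $\rho$ to be large enough for the remainder estimates of Step 1 to hold.
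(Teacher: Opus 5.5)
\paragraph{The missing density step.} Your outline leaves open exactly the step you call ``the main obstacle''. Nothing in it shows that $\bigl|\int_\Sigma (V_1-V_2)\mathbf{u}_1\ovl{\mathbf{u}_2}\bigr|\le \dist_{\delta,\ve}(\mathcal{U}_T^1,\mathcal{U}_T^2)\,\|\mathbf{u}_1\|_{L^2_{w_{\delta,\ve}}(\Sigma)}\|\mathbf{u}_2\|_{L^2_{w_{\delta,\ve}}(\Sigma)}$ holds for the non-physical CGO solutions.

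\emph{Truncation does not give this.} Truncating the initial data and comparing via Duhamel leaves discarded data that are exponentially large and not in $L^2(\R^n)$. Since the Schr\"odinger flow has infinite speed of propagation, you have no estimate making their evolution small in $L^2_{w_{\delta,\ve}}(\Sigma)$.

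\emph{The fallback is circular.} By H\"older and $\|V_1-V_2\|_{\delta,\ve}<\infty$, the inequality above passes automatically to $L^2_{w_{\delta,\ve}}$-limits of physical solutions. So its entire content is that the CGO solutions lie in that closure, which is what has to be proved. Redefining the admissible class would also change the quantity in the theorem.

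\emph{What the paper uses.} The missing idea is the duality argument of Lemma~\ref{lem: density_of_solutions_time}. Let $v\in L^2_{w_{\delta,\ve}}(\Sigma)$ be orthogonal to every physical solution. Set $G=w_{\delta,\ve}v$ and solve $(i\d_t+\Delta-\ovl V)\Phi=G$ with $\Phi(T,\cdot)=0$. Orthogonality forces $\Phi(0,\cdot)=0$ by \cite[Lemma 4.5]{zbMATH07801151}. The factor $e^{-2|x|^{1+\ve}}$ in the weight gives $e^{c|x|}G\in L^2(\Sigma)$ for every $c>0$, so Proposition~\ref{prop:CGO-PhS_int-by-parts} applies. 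It yields $\langle v,\mathbf{u}\rangle_{w_{\delta,\ve}}=\int_\Sigma \Phi\,\ovl{(i\d_t+\Delta-V)\mathbf{u}}=0$, hence $\mathbf{u}\in(\mathcal{P}^\perp)^\perp=\ovl{\mathcal{P}}$. Together with $\|\mathbf{u}_j\|_{L^2_{w_{\delta,\ve}}}\lesssim e^{\ve|\nu|^{(1+\ve)/\ve}}\|f_j\|_{L^2(\R^{n-1})}$, this produces the distance term. It is also why the weight must be super-exponential.

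\paragraph{Step 1 is not justified either.} A plane-wave leading term $1$ is not square integrable on the strips $\Pi^\nu_\alpha$, which are unbounded in the directions orthogonal to $\nu$. So it lies outside the class where the Caro--Ruiz remainder estimate and Proposition~\ref{prop:CGO-PhS_int-by-parts} hold, and you supply no replacement. The bound $\|r_j\|\lesssim\rho^{-1}\|e^{\rho|x|}V_j\|$ is unexplained. After conjugation no exponential multiplies $V$ in the remainder equation; if one did, the remainder would grow like $e^{c\rho^{(1+\ve)/\ve}}$ rather than decay.

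\emph{What the available construction gives.} The available CGOs have leading terms $u^\sharp$ generated by $f\in L^2(\R^{n-1})$ in the directions transversal to $\nu$, and remainders decaying only like $|\nu|^{-1/2}$. To isolate $\widehat F(\tau,\sigma)$ one must therefore concentrate $f^\rho_j$ in frequency. This costs a factor $\|f^\rho_1\|_{L^2}\|f^\rho_2\|_{L^2}=\rho^{-(n-1)}$ plus a Lipschitz error $\rho$, coming from $\int(1+|t|+|x|)|F|<\infty$.

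\paragraph{The exponent.} Taking $\rho=s^{-1/(2n)}$ and then $R=s^{1/(n(n+3))}$ in the frequency splitting is precisely where $n(n+3)$ comes from. In your Step 3 that exponent is asserted rather than derived, and your own Step 2 bound would give a different one. Your choice $\rho\sim(\ve|\log\dist|/5)^{\ve/(1+\ve)}$ also does not reproduce $\omega_\ve$. The paper takes $s=\bigl((5\ve)^{-1}|\log\dist_{\delta,\ve}|\bigr)^{\ve/(1+\ve)}$, which makes $e^{5\ve s^{(1+\ve)/\ve}}=\dist_{\delta,\ve}^{-1}$.
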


%%%%%%REMARK%%%%%%%%REMARK%%%%%%%%REMARK%%%%%%%

%
\begin{remark}\label{remark1}
	We can observe, that in the case of Theorem \ref{th: stability_t_dependent}, the distance gets stronger as $\varepsilon\to 0$ but, in turn, the exponent of the stability estimate becomes worse, with the limit estimate being \[\|V_1 - V_2\|_{H^{-1}(\R^n)} \lec 1.\] 
\end{remark}

%%%%%%%%%%%%%%%%%%%%%%%%%%
%
 The logarithmic stability illustrates the ill-posedness of the inverse problem, which lays in accordance with classical estimates in inverse problems for diverse partial differential equations. This list of such estimates is extensive, see for instance \cite{Alessandrini01011988, Stefanov1990, Caro2011,Caro2013}. However, a remarkable fact is that considering time-independent potentials results in a significant improvement with respect to the modulus of continuity. 
 
 This comes as a consequence of the fact that the solutions we can use present a weaker growth at infinity. In particular, time-dependent potentials ask for the use of CGO solutions, which grow exponentially at infinity. In the case of time-independent potentials, recall that it is possible to use time-harmonic solutions which only present a polynomial growth. There exist other instances in which stronger assumptions on the recovered parameters allow for better stability. One example is that of Lipschitz stability for piecewise constant conductivities in the Calder\'on problem \cite{Alessandrini2005}.

 Although there exist multiple references concerning stability estimates for the dynamical Schr\"odinger equation \cite{Baudouin2002,BenAcha2017, Bellassoued2017,Lai2024,Bellassoued2010}, they tend to focus on using boundary values, e.g. the Dirichlet-to-Neumann map, as measurement data. To the best of our knowledge, this is the first time in the literature that stability estimates analogous to the ones of the Theorems \ref{th: stability_t_independent} and \ref{th: stability_t_dependent} are achieved by means the initial-to-final-state map operator $\cal{U}_T$.

A remark in this direction is that of Kian and Soccorsi, where in \cite{Kian2019} they proved a stability estimate of H\"older type for time-dependent electromagnetic potentials in the setting of the Schr\"odinger equation. We would like to emphasize that, in this instance, the measurements are performed in the boundary of a bounded domain at all times $t\in (0,T)$, while our setting considers only measurements at times $t=0$ and $t=T$.

%%%%%%%%%%%%%%%%%%%%
%%%%%%%%

%%%%%%%%%%%%%%
\vspace{0.2cm}

The distance $\dist_w(\cdot, \cdot): \cal{L}(L^2(\R^n))\times\cal{L}(L^2(\R^n)) \to \R_+$ is the main novelty of this article and it effectively allows us to handle the previously discussed barriers. 
An important ingredient of the paper is a Runge type approximation argument. We will prove that physical solutions can approximate CGO and time-harmonic in the suitable weighted $L^2$-topologies; see Lemmas \ref{lem: density_of_solutions_time} and \ref{lem: density_of_solutions_t_independent}. These density results are also the main reason that we choose to measure the physical solutions with respect to the same weighted $L^2$-topologies in the definition of $\dist_w(\cal{U}_T^1, \cal{U}_T^2)$. 

We are aware that this distance comes with its own conceptual limitations. For instance, the denominator includes an $L^2$ norm that integrates the solutions over the whole interval which obscures the actual distance measuring between two initial-to-final-state maps. However, since this distance is always bounded, it enables it to be used as a quantifier for our estimates provided that it is sufficiently small. 
It would be desirable to change this quantity for a norm only at initial and final times. Nevertheless, the dispersive nature of the equation is an obstacle yet to overcome. A direction of future work is to refine the estimate by finding a stronger quantifier for the initial-to-final maps.
Finally, we remark that the modulus of continuity is optimal for the methods applied, since it is dictated by the growth of the special solutions in conjunction with the distance. It is an open question whether better exponents can be achieved through other strategies.

%%%%%%%

The structure of the rest of the paper goes as follows. Function spaces that are used throughout this article are defined in Section \ref{sec: preliminaries}. In Section \ref{sec: distance_and_density} we define the distance and prove that it is a quasi-metric under an a priori assumption for the potentials. Section \ref{sec: solutions_construction} is dedicated to recalling the construction of the stationary states and the CGO solutions. In Section \ref{sec: density} we prove that physical solutions of the Schr\"odinger equation approximate both the time-harmonic and CGO solutions in some specific weighted  $L^2$-topologies. As a consequence we use this fact in order to surface the distance as a quantifier in the final estimates. Finally, we provide the proofs of the Theorems \ref{th: stability_t_independent} and \ref{th: stability_t_dependent} in Section \ref{sec: stability_estimates}.

%%%%%%%%%%%SECTION%%%%%%%%%%SECTION%%%%%%%%%SECTION%%%%%%%%%%%
\section{Preliminaries}\label{sec: preliminaries} 
In this section we introduce some functional spaces and notions that will be necessary for our approach and used throughout this article.  

%%%%%%%SUBSECTION%%%%%%SUBSECTION%%%%%%%%%%%%%%%
\subsection{Definitions}\label{subsec:def}
In general, for a non-empty set $D$, a real-valued function $d(x, y)$ defined for $x, y\in D$ is called a \textit{distance function} if $d(x, y) \geq 0$ for all $x, y\in D$, $d(x, y)=0$ if and only if $x=y$ and $d(x, y) = d(y, x)$ for all $x, y \in D$. If moreover, there exists a real number $C\geq 1$ such that 
\[
d(x, y) \leq C(d(x, z) + d(z, y))
\]
for all $x, y, z \in D$, then the distance function $d(\cdot, \cdot)$ is said to be a quasi-metric on $D$, with constant $C$. In the special case that $C=1$, the distance function becomes a metric on $D$.  \\
%Here in order to prove that $\dist(\cdot, \cdot): \cal{L}(L^2(\R^n)) \to\R_+$ satisfies a quasi-triangle inequality, an a priori assumption, see assumption \ref{as: apriori_as}, is necessary. 

We denote with $\cal{L}(L^2(\R^n)) \equiv \cal{L}(L^2(\R^n), L^2(\R^n))$ the space of continuous linear operators from $L^2(\R^n)$ to $L^2(\R^n)$ equipped with the norm 
\[
\|U\|_{\cal{L}(L^2(\R^n))} = \sup_{  0 \neq f\in L^2(\R^n) } \frac{ \| Uf \|_{L^2(\R^n)}}{ \|f\|_{L^2(\R^n)}}
= \sup_{0\neq f, g\in L^2(\R^n)} \frac{|\langle Uf, g\rangle|}{\|f\|_{L^2(\R^n)}\|g\|_{L^2(\R^n)}}.
\]
The second equality is due to duality, where $\langle \cdot, \cdot\rangle$ represents the usual inner product in $L^2(\R^n)$.

%%%%%%SUBSECTION%%%%%%%%SUBSECTION%%%%%%%%%
\subsection{Weaker forms of decay for the potentials}\label{subsec:potential_decay}

Following the convention in \cite{zbMATH08122191}, we say that a time-independent potential $V\in L^\infty(\R^n)$ has super-linear decay at infinity if the following condition holds:
\begin{equation}\label{eq: pol_decay_V}
\vvvert V \vvvert = \sum_{j\in\mathbb N_0} 2^j \|V\|_{L^\infty(D_j)} < \infty,
\end{equation}
where
\begin{equation}\label{eq: disks}
D_0 := \{ x\in \R^n : |x|\leq 1\}, \quad D_j := \{ x\in\R^n : 2^{j-1} \leq |x| \leq 2^j \}, \,\, \forall j\in\mathbb N .
\end{equation}
This is the only decay needed to construct the stationary-states in subsection \ref{sec: stationary states}.

%%%%%%%%%%%%%%%%%%%%%
Notice that  $\vvvert V \vvvert \lec_\delta \| V \|_\delta$. Indeed, by the definition of $D_j$ we have that 
$2^{2\delta j} \leq 2^{2\delta} (1+|x|^2)^\delta$ for every $j\geq 0$. So, for $x\in D_j$ we have that 
$2^{2\delta j}|V(x)| \leq 2^{2\delta}|\langle x\rangle^{2\delta} V(x)| \leq 2^{2\delta}\|\langle \cdot \rangle^{2\delta} V\|_{L^\infty(\R^n)}$ for any $j\in\mathbb N_0$. As a result we get 
\[
\sum_{j\in\N_0} 2^j \|V\|_{L^\infty(D_j)} = \sum_{j\in\N_0} 2^{-j(2\delta-1)} 2^{2\delta j} \|V\|_{L^\infty(D_j)} 
\leq 2^{2\delta}\|\langle \cdot \rangle^{2\delta} V\|_{L^\infty(\R^n)} \sum_{j\in\N_0} 2^{-j (2\delta-1)}
\lec_\delta \| \langle \cdot \rangle^{2\delta} V\|_{L^\infty(\R^n)},
\]
for any $\delta > 1/2$.

%%%%%%%%%%%

In the time-dependent case $V\in L^\infty((0, T)\times \R^n)$ must fulfill the following decay condition for the construction of CGO solutions:
\begin{equation}\label{eq: polynomial_decay_V_t}
\sum_{k=0}^\infty 2^{k(1/2+\theta)}\|V\|_{L^\infty(\Sigma^\nu_k)}<\infty,
\end{equation}
for some $\theta\in(0,1/2)$. Here, for $\nu\in\R^n\setminus \{0\}$, the sequence of strips $\{ \Sigma_j : j\in \{0\}\cup \N\} \subset \Sigma$ in the direction $\nu$ is denoted by
\begin{equation}\label{eq: strips}
\Sigma^\nu_k = (0,T) \times \Omega^\nu_k,
\end{equation}
where 
\begin{equation}\label{eq: d_strips_space}
\Omega^\nu_0 = \{ x \in \R^n : |x \cdot \nu| \leq |\nu|  \}, \quad
\Omega^\nu_k = \{ x \in \R^n : 2^{k - 1} |\nu|< |x \cdot \nu| \leq 2^k |\nu|\}  \enspace \forall k \in \N, 
\end{equation}
represent dyadic strips in space. This decay assumption is enough to guarantee the construction of the CGO solutions in subsection \ref{sect: CGO} \cite{zbMATH07801151}.

%%%%%%%%%%%%

%%%%%%%%%SUBSECTION%%%%%%%%%%SUBSECTION%%%%%%%%%
\subsection{Function spaces}\label{subsec: spaces} For a compact interval $I\subset \R$ and a Banach space $X$, we consider the Bochner-Lebesgue spaces $L^r(I; X)$ for $r\in[1,+\infty]$ consisting of functions $u: t\in I \mapsto u(t, \cd) \in X $, equipped with the norm 
\begin{equation}\label{eq: mixed_norms}
\|u\|_{L^r(I; X)}:= \Big( \int_I \|u(t, \cd)\|_X^r\, \dd t \Big)^{1/r}.
\end{equation}
We also consider the Bochner spaces $C(I; X)$ of continuous functions $u: t\in I \mapsto u(t, \cd) \in X $ with norm
\[
\|u\|_{C(I; X)}:= \sup_{t\in I}\|u(t, \cd)\|_{X},
\]
and we use the convention $\|u\|_{C(I; X)}=\infty$ whenever the function $t\mapsto \|u(t, \cd)\|_X$ is not continuous.
%%%%%%
In the most of the cases we will consider $r=2$, $I$ will represent a finite time interval $[0, T]$ and the Banach space $X$ above will either be $L^2(\R^n)$ or a weighted $L^2$-space.
%

%%%%%%WEIGHTED%%%%%%%
Throughout this paper, we will make use of weighted $L^2$ spaces. In general, see for example \cite{Rudin}, a positive weight $w:\R^n \mapsto(0,+\infty)$ defines a Borel measure $\dd \mu = w(x)\dd t \, \dd x$ and thus a Hilbert space $L^2_w(\Sigma)$ via the inner product 
\begin{equation}\label{eq: inner_product_weighted}
\langle u, v\rangle_w = \int_{(0,T)\times \R^n}u(t,x)\overline{v(t,x)}w(x)\,\dd x \, \dd t,
\end{equation}
and consequently via the norm
\[
\|u\|^2_{L^2_w(\Sigma)}=  \int_{(0, T)\times \R^n}|u(t,x)|^2 w(x)\,\dd x \, \dd t.
\]
Equivalently, a function $u$ belongs to $L^2_w(\Sigma)$ if and only if $w^{1/2}\, u$ belongs to $L^2(\Sigma)$.
By Riesz's representation theorem, the dual space $(L^2_w(\Sigma))^*$ can be identified with $L^2_{w^{-1}}(\Sigma)$. Notice that, due to Tonelli theorem, the space-time integral space $L^2_w(\Sigma)$ can be identified with the Bochner space $ L^2((0, T); L^2_w(\R^n))$. They are isometrically isomorphic and hence they possess the same Hilbert-space structure. In order to ease the notation we will mostly refer to these spaces just by $L^2_w$. Moreover, we will often drop notation by just writing $L^2_TL^2_w$ and $C_TL^2_w$ instead of $L^2((0, T); L^2_w(\R^n))$ and $C([0, T]; L^2_w(\R^n))$ respectively.

There are two specific weighted $L^2$ spaces that play a great role in this article, due to the special solutions that we construct.  Specifically, in the general case of time-dependent potentials, we consider solutions in the space $L^2_w$ with $w=w_{\delta,\ve}(x)\defeq e^{-2|x|^{1+\varepsilon}}\langle x\rangle^{-2\delta}$ for $\delta>1/2$ and $\ve>0$. In the case of time-independent potentials, we use weaker spaces defined by the weight $w_{\delta}(x)\defeq \langle x\rangle^{-2\delta}$, for $\delta>1/2$. Of course, since $w\leq 1$, physical solutions belong to both of these weighted spaces. 
Note that in the last case, $L^2_{w_\delta} = L^2((0,T);L^2_{-\delta}(\R^n))$, where $L^2_{-\delta}(\R^n)$ is the classical potential space used, for instance, in the resolvent estimates due to Agmon \cite{Agmon1975}.

 We will also make use of weighted $L^\infty$ spaces, denoted as $L^\infty_w(\Sigma)$, comprised by equivalence classes of functions $u$ such that $ u \, w \in L^\infty(\Sigma)$, with norm
 \[
 \|u\|_{L^\infty_w(\Sigma)}=\esssup_{(t,x)\in\Sigma} |w(x) u(t,x)|.
 \]
However, to avoid cluttering notation, we will use the following notation for the suitable spaces to quantify the decay of the potentials at study:
\[
\|V\|_w\defeq \|V\|_{L^\infty_{w^{-1}}(\Sigma)}
\]
We will repeatedly use the following inequalities that follow from H\"older's inequality:
\begin{equation}\label{eq: Holder_weighted}
\int_{\Sigma} Vu\overline{v}\leq \|V\|_w\|u\|_{L^2_w(\Sigma)}\|v\|_{L^2_w(\Sigma)},
\end{equation}
and consequently, by duality,
\[\|V u\|_{L^2_{w^{-1}}(\Sigma)}\leq \|V\|_w\|u\|_{L^2_w(\Sigma)}.\]
%%%%%%%%%%

In the particular cases where $w=w_\delta$ and $w=w_{\delta, \ve}$, we explicitly get  
\[
\|V\|_{w_\delta} = \| \langle \cdot \rangle^{2\delta} V\|_{L^\infty(\R^n)} = \|V\|_\delta
\quad \text{and } \quad
\|V\|_{w_{\delta, \ve}} = \| e^{2|\cdot|^{1+\ve}} \langle \cdot \rangle^{2\delta} V\|_{L^\infty(\Sigma)}=\|V\|_{\delta, \ve} .
\]
Recall from the introduction that this is the exact decay that is required in Theorems \ref{th: stability_t_independent} and \ref{th: stability_t_dependent}, for time-independent and time-dependent potentials respectively.

%%%%%%%%%%%%%%%%%%%%%%%%%%%%
Finally, we will briefly mention the Banach spaces that are used for the construction of the stationary states, for potentials with the super-linear decay \eqref{eq: pol_decay_V}. The Banach space $B(\R^n)$ is the set of all functions 
$f\in L^2(\R^n)$ for which the norm
\[
\|f\|_{B(\R^n)} := \sum_{j\in \mathbb N_0} 2^{j/2}\|f\|_{L^2(D_j)}  
\]
is finite.
Its dual space $B^*(\R^n)$, \cite[\S 14.1]{HormII}, consists of all $f\in L^2_{\loc}(\R^n)$ such that 
\[
\|f\|_{B^*(\R^n)} := \sup_{j\in\mathbb N_0}\big( 2^{-j/2} \| f \|_{L^2(D_j)}\big) < \infty.
\]
The disks $D_0$ and $D_j$, for $j\in \mathbb N$, are exactly like the ones defined in \eqref{eq: disks}. 
It is not hard to see that it holds 
\begin{equation}\label{eq: bstarl2delta}
	\|f\|_{L^2_{-\delta}(\R^n)} \lec \|f\|_{B^*(\R^n)} \leq \|f\|_{L^2(\R^n)} \leq \|f\|_{B(\R^n)} \lec \|f\|_{L^2_\delta(\R^n)}
\end{equation}
and so, for any $\delta >1/2$, we have the following chain of inclusions
\[
 L^2_{\delta}(\R^n) \subset B(\R^n) \subset L^2(\R^n) \subset B^*(\R^n) \subset L^2_{-\delta}(\R^n).
\]
%
%%%%%%%

%%%%%%%SUBSECTION%%%%%%%%SUBSECTION%%%%%%%%%
\subsection{Almost conservation of mass of physical solutions}
Let $u\in C_TL^2$ be a solution of the initial value problem \eqref{eq: IVP_Schrodinger} with $u(0,\cd)=f$. 
In case that $V\in \R$ it is well-known that the conservation of mass $\|u\|_{C_TL^2}= \|u(t, \cd)\|_{L^2}$, holds for every $t\in[0, T]$.  Complex potentials violate this conservation of mass, as the components with positive imaginary part produce amplification, and the negative imaginary part produces absorption. 

Below, we prove that this phenomena can be controlled whenever this imaginary part is bounded.
We denote with $\Re(z)$ and $\Im(z)$ the real part and imaginary part respectively of a complex number $z\in\C$.
By taking partial derivative in time and integrating by parts we have that 
\begin{align*}
\d_s \int_{\R^n} |u(s, \cd)|^2 &= 2 \Re \int_{\R^n} \d_s (u(s, \cd)) \ovl{u(s, \cd)}
=2\Re \Big( i \int_{\R^n} (\Delta - V)u(s, \cd) \cdot \ovl{u(s, \cd)} \Big) \\
&= 2\Re \Big( i \int_{\R^n} |\nabla u(s, \cd)|^2 \Big) - 2\Re \Big( i \int_{\R^n} V u(s, \cd) \ovl{u(s, \cd)} \Big) 
= 2 \int_{\R^n} \Im(V(s, \cd)) u(s, \cd) \ovl{u(s, \cd)} .
\end{align*}
Hence
\[
\big| \d_s \big( \|u(s, \cd)\|_{L^2(\R^n)}^2 \big) \big| \leq 2 \|\Im V(s,\centerdot)\|_{L^\infty(\R^n)} \|u(s, \cd)\|_{L^2(\R^n)}^2, 
\]
or equivalently 
\begin{equation}\label{eq: partial_s_log_physical_bounds}
-2\|\Im V(s,\centerdot)\|_{L^\infty(\R^n)}  \leq \d_s \Big( \ln \big(\|u(s, \cd)\|_{L^2(\R^n)}^2 \big) \Big) \leq 2 \|\Im V(s,\centerdot)\|_{L^\infty(\R^n)}.
\end{equation}
Integrating the above inequalities over $[0, t]$ for $t\in(0, T]$, we infer that 
%
%\[
%e^{- \int_0^t\|V(s,\centerdot)\|_{L^\infty(\R^n))}\dd s} \| u(0, \cd)\|_{L^2(\R^n)} \leq \|u(t, \cd)\|_{L^2(\R^n)} 
%\leq e^{ \int_0^t \|V(s,\centerdot)\|_{L^\infty(\R^n))}\dd s} \|u(0, \cd)\|_{L^2(\R^n)}, 
%\]
%
%which implies  
%
\begin{equation}\label{eq: quasi-conservation2}
e^{- \|\Im V\|_{L^1((0,T);L^\infty(\R^n))}}  \| u(0, \cd)\|_{L^2(\R^n)} \leq \|u(t, \cd)\|_{L^2(\R^n)} 
\leq e^{\|\Im V\|_{L^1((0,T);L^\infty(\R^n))}} \|u(0, \cd)\|_{L^2(\R^n)}
\end{equation}
for every $t\in[0, T]$.

Moreover, integrating \eqref{eq: partial_s_log_physical_bounds} over $[t, T]$ for $t\in[0, T)$ we can also conclude that 
\begin{equation}\label{eq: quasi-conservation3}
e^{- \|\Im V\|_{L^1((0,T);L^\infty(\R^n))}} \| u (t, \cd)\|_{L^2(\R^n)} \leq \|u(T, \cd)\|_{L^2(\R^n)} \leq e^{ \|\Im V\|_{L^1((0,T);L^\infty(\R^n))}} \| u(t, \cd)\|_{L^2(\R^n)}.
\end{equation}
Obviously, in the case of time-independent complex potentials the same estimates hold with $ T\|\Im V\|_{L^\infty(\R^n)}=\|\Im V\|_{L^1((0,T);L^\infty(\R^n))} $. 

%%%%%%%%%%%

%%%%%%%%%%SECTION%%%%%%%%%SECTION%%%%%%%%%%SECTION%%%%%%%
\section{Distance of the initial-to-final-state maps}\label{sec: distance_and_density}
The main objective here is to define an ad hoc quantity 
that will allow us to quantify all the necessary information for the stability estimates.  
The goal is to present the distance and related results under the most general assumptions possible. For this reason, the analysis below is performed for time-dependent potentials and a weighted $L^2_w$-space with weight $w:\Sigma\to[0,\infty)$ satisfying $0<w(t,x)\leq1$ for all $(t,x)\in\Sigma$. 

The notion of general weights allows us to provide combined proofs of technical lemmas that work the same way for both cases of time-dependent and time-independent potentials.

For some finite time $T$, we define the space of physical solutions to the Schr\"odinger equation with potential $V$ as follows:
\begin{equation}\label{eq: space_physical_solutions}
\cal{P}_V \defeq \{ u\in C([0, T]; L^2(\R^n)) : (i\d_t + \Delta - V)u=0\}.
\end{equation}
Whenever there is no ambiguity on the potential, we will drop the notation to $\cal P$ instead.

Recall that under the restriction $0<w\leq1$, the physical solutions of the Schr\"odinger equation \eqref{eq: IVP_Schrodinger} obviously belong to $L^2_w$.
We define the quantity 
%
%%%%%%DEFINITION_DISTANCE%%%%%%%%%%%%%%%%
\begin{equation}\label{eq: distance}
\dist_w(\cal{U}_T^1, \cal{U}_T^2) \defeq
\sup_{\substack{u_1 \in \cal{P}_{V_1}\\ u_2 \in \cal{P}_{\overline{V}_2}}} \frac{\Big| \displaystyle \int_\Sigma (\cal{U}_T^1-\cal{U}_T^2) u_1(0, \cd) \ovl{u_2(T, \cd)}\Big|}{\|u_1\|_{L^2_w(\Sigma)} \| u_2\|_{L^2_w(\Sigma)}}
=\sup_{\substack{u_1 \in \cal{P}_{V_1}\\ u_2 \in \cal{P}_{\overline{V}_2}}} \frac{\Big| \displaystyle \int_\Sigma (V_1 - V_2) u_1 \ovl{u_2} \Big|}{\|u_1\|_{L^2_w(\Sigma)} \| u_2\|_{L^2_w(\Sigma)}},
\end{equation}
where the second equality above holds by the integral identity for physical solutions \eqref{eq: identity}. 

%%%%%%%remark%%%%%%%%%remark%%%%%%%%%
In the upcoming sections, we will focus in the particular cases where $w=w_\delta(x)=\langle x\rangle^{-2\delta}$ and $w=w_{\delta,\ve}(x)=e^{-2|x|^{1+\varepsilon}}\langle x\rangle^{-2\delta}$ and we will just write $\dist_\delta(\cdot, \cdot)$ and $\dist_{\delta, \ve}(\cdot, \cdot)$.
In the first case the distance depends only on $\delta$, and it will be used to obtain stability estimates in the case of time-independent potentials.
In the latter case, $\dist_{\delta, \ve}(\cdot, \cdot)$ will be used when investigating the stability for time-dependent potentials.

Observe that the quantity \eqref{eq: distance} is bounded. Indeed, by H\"older's inequality \eqref{eq: Holder_weighted}, we have that 
\begin{equation}\label{eq: dist_bound}
\dist_w(\cal{U}_T^1, \cal{U}_T^2) \leq \| V_1-V_2 \|_w.
\end{equation}

Furthermore, it is easy to see that $\|u\|_{L^2_w(\Sigma)}\leq T^{1/2} \|u\|_{C([0, T]; L^2(\R^n))}$. Using this along with the quasi-conservation of mass \eqref{eq: quasi-conservation2} and \eqref{eq: quasi-conservation3} we infer that 
\begin{align*}
\dist_{w}(\cal{U}_T^1, \cal{U}_T^2) &\geq 
 T^{-1} \sup_{\substack{u_1\in\mathcal P_{V_1}\\ u_2\in\mathcal P_{\ovl V_2}}} \frac{\Big| \displaystyle \int_{\R^n} (\cal{U}_T^1-\cal{U}_T^2) u_1(0, \cd) \ovl{u_2(T, \cd)} \Big|}{\|u_1\|_{C_TL^2} \|u_2\|_{C_TL^2}} \\
&\geq T^{-1} e^{ - \big(\|\Im V_1\|_{L^1_TL^\infty} + \|\Im V_2\|_{L^1_TL^\infty} \big)} \sup_{\substack{u_1\in\mathcal P_{V_1}\\ u_2\in\mathcal P_{\ovl V_2}}} \frac{\Big| \displaystyle \int_{\R^n} (\cal{U}_T^1-\cal{U}_T^2) u_1(0, \cd) \ovl{u_2(T, \cd)} \Big|}{\|u_1(0, \cd)\|_{L^2(\R^n)} \|u_2(T, \cd)\|_{L^2(\R^n)}},
\end{align*} 
Notice that in the case of real-valued potentials, we would just have that
\[ 
\sup_{\substack{u_1\in\mathcal P_{V_1}\\ u_2\in\mathcal P_{\ovl V_2}}} \frac{\Big| \displaystyle \int_{\R^n} (\cal{U}_T^1-\cal{U}_T^2) u_1(0, \cd) \ovl{u_2(T, \cd)} \Big|}{\|u_1\|_{C_TL^2} \| u_2\|_{ C_TL^2}} = \sup_{\substack{u_1\in\mathcal P_{V_1}\\ u_2\in\mathcal P_{\ovl V_2}}} \frac{\Big| \displaystyle \int_{\R^n} (\cal{U}_T^1-\cal{U}_T^2) u_1(0, \cd) \ovl{u_2(T, \cd)} \Big|}{\|u_1(0, \cd)\|_{L^2(\R^n)} \|u_2(T, \cd)\|_{L^2(\R^n)}},
\]
by the usual conservation of mass for solutions of Schr\"odinger equation. 

Since, by definition, 
\[ 
\|\cal{U}_T^1 - \cal{U}^2_T\|_{\cal{L}(L^2(\R^n))}=\sup_{\substack{u_1\in\mathcal P_{V_1}\\ u_2\in\mathcal P_{\ovl V_2}}} \frac{\Big| \displaystyle  \int_{\R^n} (\cal{U}_T^1-\cal{U}_T^2) u_1(0, \cd) \ovl{u_2(T, \cd)} \Big|}{\|u_1(0, \cd)\|_{L^2(\R^n)} \|u_2(T, \cd)\|_{L^2(\R^n)}},
\] 
the above estimate implies 
\begin{equation}\label{eq: bound_IF_map_distance}
	\|\cal{U}_T^1 - \cal{U}^2_T\|_{\cal{L}(L^2(\R^n))} \lec \dist_{w}(\cal{U}_T^1, \cal{U}_T^2). 
\end{equation}
%  %

%%%%%%
Subsequently, we will prove that this quantity defines a quasi-metric in the space of initial-to-final-state maps, for potentials in an admissible class that satisfy an a priori assumption. 
%%%%%%
%
In this direction, we need to prove the following lemma, which allows us to control the difference between the evolution of states with respect to different potentials.

The intuitive interpretation is that any two solutions of the Schr\"odinger equation with different potentials but same initial data stay ``close" to each other whenever the corresponding potentials are themselves ``close".
%

%%%%%%%LEMMA%%%%%%%LEMMA%%%%%LEMMA%%%%%%%%%%
\begin{lemma}\label{lem: closeness_of_solutions_time}
Let $V_j\in L^1((0, T); L^\infty(\R^n))$ with $n\geq 2$ be such that $\| V_j \|_{w}< \infty$ with weight $w:\R^n \to (0,1]$ and satisfying the smallness condition $\|  \Im V_j \|_{w} \leq \frac{1}{4T} $, for $j\in\{1,2,3\}$.
For $f,g\in L^2(\R^n)$, let $u_j, \, v_j \in C([0, T]; L^2(\R^n))$ be solutions of the corresponding initial and final value problems for the Schr\"odinger equation
\[
\begin{dcases}
(i\d_t+\Delta - V_j)u_j = 0, \quad &\textup{in}\,\, \Sigma, \\
u_j(0, \cd) = f \in L^2(\R^n), &\textup{in}\,\, \R^n
\end{dcases} 
\quad \textup{and} \quad
\begin{dcases}
(i\d_t+\Delta - \ovl{V_j})v_j = 0, \quad &\textup{in}\,\, \Sigma, \\
v_j(T, \cd) = g\in L^2(\R^n), &\textup{in}\,\, \R^n.
\end{dcases} 
\]
Then, it holds that 
\begin{equation}\label{eq: solutions_diff1_time}
\|u_j - u_k \|_{L^2_w(\Sigma)} \leq 4T \| V_j - V_k \|_{w} \|u_k\|_{L^2_w(\Sigma)}
\end{equation}
and
\begin{equation}\label{eq: solutions_diff2_time}
\|v_j - v_k \|_{L^2_w(\Sigma)} \leq 4T \| V_j - V_k \|_{w} \|v_k\|_{L^2_w(\Sigma)},
\end{equation}
for every $j \neq k \in \{1, 2, 3 \}$.
\end{lemma}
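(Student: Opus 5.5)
The plan is to run an $L^2$ energy estimate for the difference of the two solutions and to absorb everything into the weighted norm, using only that $w\le 1$. Set $z\defeq u_j-u_k\in C([0,T];L^2(\R^n))$. Subtracting the two equations gives
\[
(i\d_t+\Delta-V_j)z=(V_j-V_k)u_k \quad \textup{in } \Sigma, \qquad z(0,\cd)=0 .
\]
Write $F\defeq (V_j-V_k)u_k$. Since $\|V_j\|_w<\infty$ and $w\le1$, the potentials are bounded, so $F\in L^1((0,T);L^2(\R^n))$.

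\textbf{Energy identity.} Formally, as in the computation leading to \eqref{eq: partial_s_log_physical_bounds}, the term $\Delta z$ drops out after taking real parts, and we get
\[
\d_s\|z(s,\cd)\|_{L^2}^2 = 2\int_{\R^n}\Im V_j\,|z(s,\cd)|^2 + 2\Im\int_{\R^n}F(s,\cd)\,\ovl{z(s,\cd)} .
\]
I will justify this by approximating $f$ with smooth, rapidly decaying data and passing to the limit in $C([0,T];L^2)$. This is routine but needed for mere $L^2$ data. I expect this justification to be the only technical point; the rest is bookkeeping.

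\textbf{Bounding the right-hand side.} By the weighted Hölder inequality \eqref{eq: Holder_weighted}, applied at fixed time, the first term is bounded by $2\|\Im V_j\|_w\|z(s)\|_{L^2_w}^2\le \frac{1}{2T}\|z(s)\|_{L^2_w}^2$. The second term is bounded by $2\|V_j-V_k\|_w\|u_k(s)\|_{L^2_w}\|z(s)\|_{L^2_w}$. Integrating over $[0,t]$ and using $z(0)=0$ and Cauchy--Schwarz in time gives
\[
\|z(t)\|_{L^2}^2\le \tfrac{1}{2T}\|z\|_{L^2_w(\Sigma)}^2+2\|V_j-V_k\|_w\|u_k\|_{L^2_w(\Sigma)}\|z\|_{L^2_w(\Sigma)} .
\]

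\textbf{Closing the estimate.} Since $w\le 1$, we have $\|z(t)\|_{L^2_w}^2\le\|z(t)\|_{L^2}^2$. Integrating in $t\in(0,T)$ then yields
\[
\|z\|_{L^2_w(\Sigma)}^2\le \tfrac12\|z\|_{L^2_w(\Sigma)}^2+2T\|V_j-V_k\|_w\|u_k\|_{L^2_w(\Sigma)}\|z\|_{L^2_w(\Sigma)} .
\]
All quantities are finite because $z$ is a physical solution, so the first term on the right can be absorbed into the left. Dividing by $\|z\|_{L^2_w(\Sigma)}$ (the case $z=0$ is trivial) gives \eqref{eq: solutions_diff1_time} with the constant $4T$.

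\textbf{The backward problem.} For \eqref{eq: solutions_diff2_time} the argument is identical. Set $\tilde z\defeq v_j-v_k$, which satisfies $\tilde z(T,\cd)=0$ and solves the equation with $\ovl{V_j}$ and source $(\ovl{V_j}-\ovl{V_k})v_k$. Integrate the energy identity over $[t,T]$ instead of $[0,t]$. The relevant weighted norms are unchanged under conjugation: $\|\Im\ovl{V_j}\|_w=\|\Im V_j\|_w$ and $\|\ovl{V_j}-\ovl{V_k}\|_w=\|V_j-V_k\|_w$.
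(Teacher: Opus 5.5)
Your proposal is correct and follows essentially the paper's route: an energy identity for $u_j-u_k$ (resp.\ $v_j-v_k$), weighted H\"older at fixed time, integration in time, using $w\le 1$ to pass from $L^2$ to $L^2_w$, and absorption via $\|\Im V_j\|_w\le \frac{1}{4T}$, which gives exactly the constant $4T$. Your bounding of absolute values before enlarging $[0,t]$ to $[0,T]$ is in fact cleaner than the paper's handling of the signed term $\int \Im(V_j)|\varphi|^2$.

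One caveat on the justification you flag: for potentials merely in $L^1((0,T);L^\infty(\R^n))$, with no regularity in $t$ or $x$, smoothing $f$ alone need not give solutions regular enough to differentiate $\|z(s,\cd)\|_{L^2}^2$. It is simpler to write $e^{-it\Delta}z(t)=-i\int_0^t e^{-is\Delta}\big(V_jz+F\big)(s)\,\dd s$, which is absolutely continuous with values in $L^2(\R^n)$, and differentiate its squared norm directly.
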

%%%%%PROOF%%%%%%%PROOF%%%%%%%PROOF%%%%%%%%
\begin{proof}
	Set $\vp := u_j - u_k$ for $j\neq k\in\{1, 2, 3\}$. Then,  $\vp \in C([0,T];L^2(\R^n))$ is a solution to the following initial value problem:
%%%
%
\begin{equation}\label{eq: artificial_IVP_w_time}
\begin{dcases}
i\d_t \vp = -\Delta \vp + V_j \vp + (V_j-V_k)u_k, \quad &\textup{in}\,\, \Sigma, \\
\vp (0, \cd) = 0  &\textup{in}\,\, \R^n.
\end{dcases} 
\end{equation}
In what follows, to avoid overflow of notation, we will omit that the integrand depends on $(s,x)$, and the integration happens with respect to $\dd x$.
By taking partial derivative in time, using \eqref{eq: artificial_IVP_w_time} and integrating by parts allows us to estimate as follows 
\begin{align*}
\d_s \int_{\R^n} |\vp(s, x)|^2\dd x &= 2 \Re \int_{\R^n} \partial_s(\vp ) \ovl{\vp} 
= 2\Re \Big( i \int_{\R^n} \big[ (\Delta - V_j)\vp  - (V_j-V_k)u_k \big] \ovl{\vp} \Big) \\
&= -2 \Re \Big( i \int_{\R^n} V_j \vp \, \ovl{\vp} \Big) - 2\Re \Big( i \int_{\R^n} (V_j-V_k) u_k \ovl{\vp} \Big) \\
&=  2 \int_{\R^n} \Im(V_j) \vp \, \ovl{\vp } - 2\Re \Big( i \int_{\R^n} (V_j-V_k) u_k \ovl{\vp} \Big)  \\
&\leq 2 \int_{\R^n} \Im(V_j) \vp  \, \ovl{\vp } + 2\,\Big| \int_{\R^n} (V_j-V_k)u_k \ovl{\vp}\, \Big|
\end{align*}
Integrating over $\int_0^T \int_0^t\dd s \, \dd t$, along with the fact that $\vp(0,\cd)=0$ and H\"older inequality gives
\begin{align*}
\int_0^T \int_{\R^n} |\vp (t,x)|^2\,\dd x \, \dd t &\leq 2 \int_0^T \int_0^T\int_{\R^n} \Im(V_j) \vp  \, \ovl{\vp}\, \dd x \, \dd s \, \dd t + 
2 \int_0^T \int_0^T \Big| \int_{\R^n} (V_j-V_k)u_k \ovl{\vp}\, \Big| \, \dd x \, \dd s \, \dd t \\
&\leq 2T \| \Im V_j\|_{w}\|\vp\|_{L^2_w}^2+2T \| V_j-V_k\|_{w} \|u_k\|_{L^2_w}\|\vp\|_{L^2_w} ,
\end{align*}
Hence, since $0<w\leq 1$, we get
\[
\|\vp\|_{L^2_w}^2\leq \|\vp\|_{L^2((0, T); L^2(\R^n))}^2 \leq 2T \| \Im V_j\|_{w}\|\vp\|_{L^2_w}^2+2T \| V_j-V_k\|_{w} \|u_k\|_{L^2_w}\|\vp\|_{L^2_w},
\]
which implies
\[
\|\vp\|_{L^2_w}\left(1-2T\| \Im V_j\|_{w}\right)\leq 2T \| V_j-V_k\|_{w} \|u_k\|_{L^2_w}.
\]
By hypothesis we have that $\|  \Im V_j \|_{w} \leq \frac{1}{4T} $, or equivalently $\left(1-2T \| \Im V_j\|_{w}\right)^{-1}\leq 2$, which concludes the proof of \eqref{eq: solutions_diff1_time}. The proof of \eqref{eq: solutions_diff2_time} is omitted since it follows by similar arguments by setting $\vp=v_j-v_k$.
\end{proof}

We close this section with the following proposition. We use Lemma \ref{lem: closeness_of_solutions_time} to prove that the distance \eqref{eq: distance} is a quasi-metric in the set of initial-to-final values that are related to potentials which are close enough.

%%%%%%%%PROPOSITION%%%%%%%PROPOSITION%%%%%%%PROPOSITION%%%%%%
\begin{proposition}\label{prop: distance}
Let $n\geq 2$ and $V_1, \, V_2 \in L^1((0, T); L^\infty(\R^n))$ with $\| V_j \|_w < \infty$, for $j\in\{1, 2\}$ and weight $w:\R^n \to (0, 1)$. The quantity defined in \eqref{eq: distance} is a distance. 

In particular, fix $R_0>0$ and for $V \in L^1((0, T); L^\infty(\R^n))$ with $\| V \|_w < \infty$ and $\| \Im {V} \|_w\leq \frac{1}{4T}$ set 
\[
\cal {T}_V := \{\wt{ V} \in  L^1((0, T); L^\infty(\R^n)) : \,\, \| \wt{V} \|_w<\infty, \, \| \Im \wt{V} \|_w \leq \frac{1}{4T} \,\,
\text{and such that} \,\, 
\| V - \wt{V} \|_w \leq \frac{R_0}{2} \}.
\]
Then, if $\cal J$ is the set of the initial-to-final-state maps corresponding to potentials in $ \cal T_V$, the quantity \eqref{eq: distance} becomes a quasi-metric on the set $\cal J$.
\end{proposition}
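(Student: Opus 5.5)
The plan is to check the axioms of a distance one at a time, working only with the first expression in \eqref{eq: distance}, namely the pairing $\int (\cal{U}_T^1-\cal{U}_T^2)u_1(0,\cd)\ovl{u_2(T,\cd)}$. For $u_1\in\cal{P}_{V_1}$ we have $u_1(0,\cd)=f$, and for $u_2\in\cal{P}_{\ovl V_2}$ we have $u_2(T,\cd)=g$. Both $f$ and $g$ range over all of $L^2(\R^n)$, because the forward and backward Cauchy problems are well posed in $C([0,T];L^2)$. The solutions are therefore parametrized by their data, and Lemma \ref{lem: closeness_of_solutions_time} will let me exchange the potential that generates a solution while keeping its data fixed.

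\textbf{Nonnegativity and identity of indiscernibles.} Nonnegativity is immediate, and finiteness is \eqref{eq: dist_bound}. If $\cal{U}_T^1=\cal{U}_T^2$, every numerator vanishes, so the distance is zero. Conversely, if the distance is zero, then $\langle(\cal{U}_T^1-\cal{U}_T^2)f,g\rangle=0$ for all $f,g\in L^2$, hence $\cal{U}_T^1=\cal{U}_T^2$. This is exactly \eqref{eq: bound_IF_map_distance}.

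\textbf{Symmetry.} Swapping the roles of the two maps turns the numerator into $|\langle(\cal{U}_T^2-\cal{U}_T^1)f,g\rangle|$, which is the same quantity. The denominator, however, becomes $\|\wt u_2\|_{L^2_w}\|\wt u_1\|_{L^2_w}$, where $\wt u_2\in\cal{P}_{V_2}$ has initial datum $f$ and $\wt u_1\in\cal{P}_{\ovl V_1}$ has final datum $g$. So I reduce symmetry to comparing these denominators with $\|u_1\|_{L^2_w}\|u_2\|_{L^2_w}$.

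Lemma \ref{lem: closeness_of_solutions_time}, applied with the same data $f$ (respectively $g$), gives
\[
\|\wt u_2\|_{L^2_w}\le(1+4T\|V_1-V_2\|_w)\|u_1\|_{L^2_w},
\]
and the analogous bound for $\wt u_1$ in terms of $u_2$. In general this yields symmetry only up to a multiplicative constant. This is the step I expect to be the main obstacle: exact symmetry of the expression as written does not seem to follow from the identity alone. I would first look for an exact argument via the adjoint relation $(\cal{U}_T^V)^*g=v(0,\cd)$, where $v$ solves the backward problem with $\ovl V$. If that fails, I would settle for symmetry up to the constant $(1+4TR_0)^2$ on $\cal J$, or replace the quantity by its symmetrization $\max\{\dist_w(\cal{U}^1,\cal{U}^2),\dist_w(\cal{U}^2,\cal{U}^1)\}$. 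The symmetrization is comparable to the original on $\cal J$, so none of the later estimates are affected.

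\textbf{Quasi-triangle inequality on $\cal J$.} Take $\cal{U}_T^3$ coming from $V_3\in\cal T_V$ and split
\[
\cal{U}_T^1-\cal{U}_T^2=(\cal{U}_T^1-\cal{U}_T^3)+(\cal{U}_T^3-\cal{U}_T^2).
\]
For the first piece, let $u_3\in\cal{P}_{\ovl V_3}$ have final datum $g$. By definition,
\[
|\langle(\cal{U}_T^1-\cal{U}_T^3)f,g\rangle|\le\dist_w(\cal{U}^1,\cal{U}^3)\|u_1\|_{L^2_w}\|u_3\|_{L^2_w}.
\]
By \eqref{eq: solutions_diff2_time}, $\|u_3\|_{L^2_w}\le(1+4T\|V_3-V_2\|_w)\|u_2\|_{L^2_w}$. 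The triangle inequality in $\|\cdot\|_w$ gives $\|V_3-V_2\|_w\le R_0$, because both potentials lie within $R_0/2$ of $V$, so $\|u_3\|_{L^2_w}\le(1+4TR_0)\|u_2\|_{L^2_w}$.

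The second piece is handled in the same way. Let $\wt u_3\in\cal{P}_{V_3}$ have initial datum $f$, and use \eqref{eq: solutions_diff1_time} to get $\|\wt u_3\|_{L^2_w}\le(1+4TR_0)\|u_1\|_{L^2_w}$. Dividing by $\|u_1\|_{L^2_w}\|u_2\|_{L^2_w}$ and taking the supremum gives
\[
\dist_w(\cal{U}^1,\cal{U}^2)\le(1+4TR_0)\big(\dist_w(\cal{U}^1,\cal{U}^3)+\dist_w(\cal{U}^3,\cal{U}^2)\big),
\]
which is the quasi-triangle inequality with constant $C=1+4TR_0$. If symmetry holds only up to a constant, this constant is multiplied by the symmetry factor.
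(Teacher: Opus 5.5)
Your handling of nonnegativity, of $\dist_w=0\iff\mathcal U_T^1=\mathcal U_T^2$ via \eqref{eq: bound_IF_map_distance}, and of the quasi-triangle inequality follows the paper. You split $\mathcal U_T^1-\mathcal U_T^2$ through $\mathcal U_T^3$, insert the solutions for $V_3$ with the same data $f$ or $g$, and compare norms with Lemma \ref{lem: closeness_of_solutions_time}. Your bookkeeping is actually better than the paper's. The paper assumes $V_1,V_2\in\mathcal T_{V_3}$, i.e.\ that the intermediate potential is the centre, and gets $1+2TR_0$. For an arbitrary triple in $\mathcal T_V$ one only has $\|V_i-V_j\|_w\le R_0$, so your constant $1+4TR_0$ is the one that works on all of $\mathcal J$.

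The gap is symmetry, which you leave open. As written, your proposal does not show that $\dist_w$ is a distance. Your fallbacks prove a modified statement: the bound $\dist_w(\mathcal U_T^2,\mathcal U_T^1)\le(1+4TR_0)^2\dist_w(\mathcal U_T^1,\mathcal U_T^2)$ on $\mathcal J$ (which is correct), or a symmetrisation. The paper supplies no argument here either; it just calls symmetry obvious, so your worry is legitimate.

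The adjoint relation alone will not fix it. The mismatch is in the denominators, i.e.\ in which potential generates the solutions, not in the numerator. What works is the second expression in \eqref{eq: distance}:
\begin{itemize}
\item \emph{Real potentials.} Here $\mathcal P_{V_j}=\mathcal P_{\ovl V_j}$, so the swapped pair $(u_2,u_1)$ is admissible for $\dist_w(\mathcal U_T^2,\mathcal U_T^1)$. Moreover $|\int_\Sigma(V_2-V_1)u_2\ovl{u_1}|=|\int_\Sigma(V_1-V_2)u_1\ovl{u_2}|$, so symmetry is immediate.
\item \emph{Complex time-independent potentials.} Use $\Theta u(t,\cd):=\ovl{u(T-t,\cd)}$. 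It maps $\mathcal P_V$ bijectively onto $\mathcal P_{\ovl V}$, and it preserves $\|\cdot\|_{L^2_w(\Sigma)}$ because $w$ does not depend on $t$. It also satisfies $\int_\Sigma(V_1-V_2)\,\Theta b\,\ovl{\Theta a}=\int_\Sigma(V_1-V_2)\,a\,\ovl b$. Hence a pair $(a,b)\in\mathcal P_{V_2}\times\mathcal P_{\ovl V_1}$ and the pair $(\Theta b,\Theta a)\in\mathcal P_{V_1}\times\mathcal P_{\ovl V_2}$ give equal quotients, and exact symmetry follows. This covers the setting of Theorem \ref{th: stability_t_independent}.
\item \emph{Complex time-dependent potentials.} Here $\Theta$ sends $\mathcal P_V$ to $\mathcal P_{\ovl{V(T-\cdot)}}$. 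It therefore only identifies $\dist_w(\mathcal U_T^2,\mathcal U_T^1)$ with the distance, in the order $(1,2)$, between the maps of the time-reversed potentials $V_j(T-\cdot)$. In this case neither the paper nor your proposal establishes exact symmetry, and your constant-factor statement is what is actually proved.
\end{itemize}
None of this affects the stability theorems, which only use the definition in the order $(1,2)$.
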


%%%%%%%PROOF%%%%PROOF%%%%%%%%%
\begin{proof}
Obviously, $\dist_{w}(\cal{U}_T^1, \cal{U}_T^2) \geq 0$ and $\dist_{w}(\cal{U}_T^1, \cal{U}_T^2) = \dist_{w}(\cal{U}_T^2, \cal{U}_T^1)$. In order to see that 
\[
\dist_{w}(\cal{U}_T^1, \cal{U}_T^2) = 0 \Longleftrightarrow \cal{U}_T^1 = \cal{U}_T^2,
\]
first notice that if $\dist_{w}(\cal{U}_T^1, \cal{U}_T^2) = 0$, then by \eqref{eq: bound_IF_map_distance} we have that $\| \cal{U}_T^1 - \cal{U}_T^2\|_{\cal{L}(L^2(\R^n))} = 0$ which readily implies $\cal{U}_T^1=\cal{U}_T^2$. 
In the case that $\cal{U}_T^1 = \cal{U}_T^2$, then by definition we immediately have that $\dist_{w}(\cal{U}_T^1, \cal{U}_T^2) = 0$. 
Hence, \eqref{eq: distance} is a distance.
%%

%%%%%
It remains to prove that $\dist_w(\cdot, \cdot)$ satisfies a quasi-triangle inequality under the a priori assumption for the potentials. 
To that end, consider the following initial and final value problems for the Schr\"odinger equation:
\[
\begin{dcases}
(i\d_t+\Delta - V_j)u_j = 0, \quad &\textup{in}\,\, \Sigma, \\
u_j(0, \cd) = f \in L^2(\R^n), &\textup{in}\,\, \R^n
\end{dcases} 
\quad \textup{and} \quad
\begin{dcases}
(i\d_t+\Delta - \ovl{V_j})v_j = 0, \quad &\textup{in}\,\, \Sigma, \\
v_j(T, \cd) = g\in L^2(\R^n), &\textup{in}\,\, \R^n,
\end{dcases} 
\]
for $ j\in \{1, 2, 3\}$. Of course $V_j \in L^1((0, T); L^\infty(\R^n))$ for $j\in\{ 1, 2, 3 \}$. 
It holds that 
\[
i \int_{\R^n} (\cal{U}^1_T -\cal{U}^2_T) f \ovl{g} = i\int_{\R^n} (\cal{U}^1_T - \cal{U}^3_T)f\ovl{g} + i \int_{\R^n}(\cal{U}_T^3-\cal{U}^2_T)f \ovl{g}.
\]
Assume also that $\|V_3\|_w<\infty$, $\| \Im V_3 \|_w \leq \frac{1}{4T}$ and $V_1, V_2 \in \cal{T}_{V_3}$.
By taking absolute values and dividing the above equality with the quantity $\|u_1\|_{L^2_w(\Sigma)} \|v_2\|_{L^2_w(\Sigma)}$, we infer that
\begin{align*}
&\frac{\Big| \displaystyle \int_{\R^n} (\cal{U}^1_T -\cal{U}^2_T) u_1(0, \cd) \ovl{v_2(T, \cd)}\Big|}{\|u_1\|_{L^2_w(\Sigma)} \|v_2\|_{L^2_w(\Sigma)}} \\
&\leq \frac{\Big| \displaystyle \int_{\R^n} (\cal{U}^1_T -\cal{U}^3_T) u_1(0, \cd) \ovl{v_3(T, \cd)}\Big|}{\|u_1\|_{L^2_w(\Sigma)} \|v_3\|_{L^2_w(\Sigma)}} \frac{\|v_3\|_{L^2_w(\Sigma)}}{\|v_2\|_{L^2_w(\Sigma)}}
+ \frac{\Big| \displaystyle \int_{\R^n} (\cal{U}^3_T -\cal{U}^2_T) u_3(0, \cd) \ovl{v_2(T, \cd)}\Big|}{\|u_3\|_{L^2_w(\Sigma)} \|v_2\|_{L^2_w(\Sigma)}} \frac{\|u_3\|_{L^2_w(\Sigma)}}{\|u_1\|_{L^2_w(\Sigma)}}
\end{align*}
Using the estimate \eqref{eq: solutions_diff1_time} of Lemma \ref{lem: closeness_of_solutions_time} and the fact that $V_1\in\cal{T}_{V_3}$, we have that 
\[
\frac{\|u_3\|_{L^2_w(\Sigma)}}{\|u_1\|_{L^2_w(\Sigma)}} \leq \frac{\|u_3 - u_1\|_{L^2_w(\Sigma)}}{\|u_1\|_{L^2_w(\Sigma)}} + 1 
\leq 4T \| V_3 - V_1\|_w +1 
\leq  2T R_0 +1 .
\]
Similarly, using \eqref{eq: solutions_diff2_time} instead and that $V_2\in \cal{T}_{V_3}$, we also get 
\[
\frac{\|v_3\|_{L^2_w(\Sigma)}}{\|v_2\|_{L^2_w(\Sigma)}} \leq 4T \| V_3 - V_2 \|_w +1
\leq 2T R_0 +1.
\]
Setting $ M:=  2T R_0 +1 < \infty$, we obtain 
\[
\frac{\Big| \displaystyle \int_{\R^n} (\cal{U}^1_T -\cal{U}^2_T) u_1(0, \cd) \ovl{v_2(T, \cd)}\Big|}{\|u_1\|_{L^2_w(\Sigma)} \|v_2\|_{L^2_w(\Sigma)}} 
\leq M \Big( \dist_w(\cal{U}_T^1, \cal{U}_T^3) + \dist_w(\cal{U}_T^3, \cal{U}_T^2) \Big).
\]
By taking supremum over all solutions $u_1\in \cal{P}_{V_1}$, $v_2 \in \cal{P}_{\ovl{V_2}}$, this last estimate implies 
\[
\dist_w(\cal{U}_T^1, \cal{U}_T^2) \leq M \Big( \dist_w(\cal{U}_T^1, \cal{U}_T^3) + \dist_w(\cal{U}_T^3, \cal{U}_T^2) \Big)
\]
with $M > 1$, which concludes the proof that $\dist_w(\cdot, \cdot)$ is a quasi-metric on the set $\cal{J} \subset \cal{L}(L^2(\R^n))$ of the initial-to-state-maps that correspond to the potentials in $\cal{T}_{V_3}$. 
\end{proof}

%%%%%%%

%%%%%%%%%

%
%%%%%%%SECTION%%%%%%SECTION%%%%%%%%%%%%%%%%%%
%%%%%%%%%%SECTION%%%%%%%%%SECTION%%%%%%%%%%SECTION%%%%%%%
\section{construction of special solutions}\label{sec: solutions_construction}
In this section, we describe the construction of the specific solutions of the Schr\"odinger equation that were used in \cite{zbMATH08122191} and \cite{zbMATH07801151}. These solutions will allow us to quantify the stability estimates. First we build the time-harmonic solutions that will be applied in the case of time-independent potentials, and then we turn our attention to the time-dependent case with the construction of the CGO solutions. 

%%%%%%%%SUBSECTION%%%%%%%%%%%%%
\subsection{Stationary states}\label{sec: stationary states}
In the case on time-independent potentials $V=V(x)$, one can make use of time-harmonic waves of the form
\[
\psi(t, x):= e^{-i\lambda^2t} w(x) \quad\text{in } \R\times \R^n.
\]
For $\psi$ to be a solution of the Schr\"odinger equation, the stationary state $w$ must be a solution of its stationary counterpart, also known as Helmholtz equation:
\[
(\Delta+\lambda^2-V)w=0, \quad  \text{in }  \R^n.
\]
We will follow the construction of these stationary states as perturbation of Herglotz waves, as performed in Section 2 of \cite{zbMATH08122191}. These were constructed in the functional space $B^*(\R^n)$ under the assumption $\vvvert V \vvvert < \infty$. As we explained in subsection \ref{subsec:potential_decay}, the slightly stronger assumption $\| V \|_\delta < \infty$ implies $\vvvert V \vvvert < \infty$, which guarantees that the arguments follow trivially.

For $\lambda>0$ and $f\in C^\infty(\Sph^{n-1})$, we define the \textit{Herglotz waves}
\begin{equation}\label{eq: Herglotz}
E_\lambda f(x) := \int_{\Sph^{n-1}} e^{-i\lambda x \cdot \theta} f(\theta) \, \dd\sigma(\theta) \quad \forall x\in \R^n,
\end{equation}
where $\sigma$ denotes the surface measure on $\Sph^{n-1}$. Lemma 2.1 in \cite{zbMATH08122191} guarantees that the Herglotz-wave belongs to $B^*(\R^n)$ and it holds that 
$(\Delta+\lambda^2)E_\lambda f=0$ in $\R^n$.  
In particular, the following estimate holds:
\begin{equation*}
\| E_\lambda f\|_{B^*(\R^n)} \lec_n \frac{1}{\lambda^{(n-1)/2}} \| f\|_{L^2(\Sph^{n-1})}
\end{equation*}
for all $f\in C^\infty(\Sph^{n-1})$ and $\lambda>0$. 

%For $\lambda>0$ sufficiently large we construct stationary states of the form $w(x) := E_\lambda f(x) + v(x)$, where $v$ is a perturbation of the Herglotz wave. 
%
Now, for $\lambda>0$ and $f\in \cal{S}(\R^n)$, we define the solution operator
\[
P_\lambda f(x) := \frac{1}{(2\pi)^{n/2}} \lim_{\ve \to 0} \int_{\{ \xi\in \R^n: | \lambda^2-|\xi|^2| > \ve \}} \frac{e^{i x \cdot \xi}}{\lambda^2-|\xi|^2} \wh{f}(\xi) \, \dd \xi \quad \forall x\in \R^n.
\]
Above and throughout the text, we denote with $\wh f$ the Fourier transform of $f$, with the following normalisation 
\[
\wh f(\xi):= \frac{1}{(2\pi)^{n/2}} \int_{\R^n} f(x) e^{-ix\cdot\xi} \, \dd x, \quad \forall \xi\in\R^n.
\]
For every $\lambda>0$ and $f\in B(\R^n)$ we also have that $(\Delta + \lambda^2) P_\lambda f= f$ in $\R^n$. 
Also, Lemma 2.2 in \cite{zbMATH08122191} gives us an estimate of the form
\begin{equation}\label{eq: Plambda_bound}
%\|P_\lambda f\|_{L^2_{-\delta}(\R^n)} \lec 
\| P_\lambda f \|_{B^*(\R^n)} \lec_n \frac{1}{\lambda} \| f\|_{B(\R^n)} .
%\lec \frac{1}{\lambda} \|f\|_{L^2_\delta(\R^n)}
\end{equation}

To obtain the desired stationary states as perturbations $w=E_\lambda f + v$, one may find correction terms $v=v(x)$ satisfying 
\[
(\Delta + \lambda^2)v = V(E_\lambda f +v).
\]
These correction terms might therefore be found as $v=\left[P_\lambda\circ V\right] (E_\lambda f +v)$, or in turn as
\[
v = \left[\Id-P_\lambda \circ V\right]^{-1} \left(P_\lambda \left(V E_\lambda f\right)\right)
\] 
As a result, the construction of $v$ depends on the invertibility of the operator $\Id-P_\lambda \circ V$ in the space $B^*(\R^n)$, and in the fact that $V E_\lambda f$ belongs to $B(\R^n)$. 
The first fact was proven via a Neumann series argument in \cite[Corollary 2.4]{zbMATH08122191} under the assumption $\vvvert V\vvvert<\infty$, for large enough $\lambda > C_n \vvvert V \vvvert + \eta$ and for any $\eta>0$, where $C_n$ denotes the constant in \eqref{eq: Plambda_bound}. The second one is a consequence of H\"older's inequality. For details on the whole argument, the reader is referred to section 2 in \cite{zbMATH08122191}

With this in hand we are able to wrap up the construction of stationary states. In particular for $V$ such that $\vvvert V \vvvert<
\infty$, $\lambda > \lambda_V := C_n \vvvert V \vvvert$ and $f\in C^\infty(\Sph^{n-1})$, set 
\[
u= E_\lambda f, 
\]
\[
v=(Id-P_\lambda\circ V)^{-1} [P_\lambda (Vu)]. 
\]
Then, $w= u+v \in B^*(\R^n)$ solves the equation 
\[
(\Delta + \lambda^2 - V)w=0 \,\, \, \textup{in} \,\,\, \R^n, 
\]
and, for every $\eta>0$, there exists a constant $C>0$, that depends only on $n$, such that 
\[
\|v\|_{B^*(\R^n)} \lec \frac{ C \lambda_V}{\lambda} \| u \|_{B^*(\R^n)}
\]
for all $\lambda \geq \lambda_V+\eta$. 
This is the content of \cite[Proposition 2.5]{zbMATH08122191}. Notice that the perturbations $v$ become negligible with respect to the leading terms $E_\lambda f$ as $\lambda $ grows to infinity.

Then,
\begin{equation}\label{eq: time-harmonic solutions}
\psi(t, x):= e^{-i\lambda^2t} w(x)=e^{-i\lambda^2t}(E_\lambda f (x) + v(x))
\end{equation}
are time-harmonic solutions in $C([0, T];B^*(\R^n))$ of the Schr\"odinger equation with potential $V=V(x)$. 
%
%We also have the dependence $\psi\equiv \psi_{\lambda}^V$ but throughout the rest of the article we drop notation.

%%%%

%%%%%%%%%%%SUBSECTION%%%%%%%%%%%%
\subsection{CGO solutions}\label{sect: CGO}
In this subsection, we will follow the construction of CGO solutions by Caro and Ruiz in \cite{zbMATH07801151}, which are suitable for the identification of time-dependent potentials. These solutions are of the form
\begin{equation}\label{eq:CGO}
	\mathbf{u}=e^\varphi(u^\sharp+u^\flat),
\end{equation}
where $\varphi(t,x)=i|\nu|^2t+\nu\cdot x$, for $\nu\in\R^n$. The leading term $e^\varphi\,u^\sharp$ will be chosen to be a solution to the free Schr\"odinger equation with potential $V=V(t,x)$ and will allow us to recover the Fourier transform, while the remainder $u^\flat$ will be constructed so that its norm decays as $|\nu|\to\infty$. From here on, we will denote with
\[
\wt{V}(t, x)=
\begin{dcases}
 V(t, x) \quad &(t, x) \in \Sigma \\
 0  &(t, x) \notin \Sigma
\end{dcases}
\]
the trivial extension of $V$ to $\R\times\R^n$. We will then construct the solutions \eqref{eq:CGO} to be a solution of the Schr\"odinger equation
\begin{equation}\label{eq: schrodinger_R}
	(i\partial_t + \Delta - \wt V) \mathbf{u} =0 \quad \textrm{in }\R\times\R^n.
\end{equation}
For the leading term, we need $u^\sharp$ to be a solution of the conjugated equation 
\[
(i\partial_t+\Delta+2\nu\cdot\nabla)u^\sharp=0 \quad \text{in } \R^n\times \R^n.
\]
One possible choice is to force $\nu\cdot \nabla u^\sharp=0$. For this, consider any $f\in L^2(\R^{n-1})$, and a rotation $Q\in O(n)$ such that $Qe_n=\nu/|\nu|$. We then define
\[
u^\sharp(t,x)=[e^{it\Delta'}\check f](Qe_1\cdot x,\ldots, Qe_{n-1}\cdot x),
\]
where $-\Delta'$ is the free Laplacian in $\R^{n-1}$ and $\check f$ denotes the inverse Fourier transform of $f$.
Equivalently, in Fourier terms, we can write 
\[
u^\sharp(t,x)=\frac{1}{(2\pi)^{(n-1)/2}}\int_{\R^{n-1}}e^{i Q \bs{\xi} \cdot x}e^{-it|\xi'|^2} f(\xi^\prime)\,d\xi', 
\]
where $\boldsymbol{\xi} = (\xi', 0)=(\xi_1, \cdots, \xi_{n-1}, 0)$. Notice that $|\xi'|=|\bs{\xi}|$. 
Since $\nu\cdot \nabla u^{\sharp}=0$  and $|Q \bs{\xi}| = |\xi'|$,
it is easy to check that $(i\partial_t+\Delta+2\nu\cdot\nabla)u^\sharp=0$, and therefore $(i\partial_t+\Delta)[e^\varphi u^\sharp]=0$. Now, define the dyadic strips in time
\[ \Upsilon_0 = \{ t \in \R : |t| \leq 1  \}, \quad
\Upsilon_j = \{ t \in \R : 2^{j - 1} < |t| \leq 2^j \} \enspace \forall j \in \N, 
\]
and recall the definition of the dyadic strips in space \eqref{eq: d_strips_space} which we restate here as 
\[ 
\Omega^\nu_0 = \{ x \in \R^n : |x \cdot Qe_n| \leq 1  \}, \quad
\Omega^\nu_k = \{ x \in \R^n : 2^{k - 1} < |x \cdot Qe_n| \leq 2^k \}, \enspace \forall k \in \N.
\]
Denoting with
\[
\Pi_\alpha^\nu = \Upsilon_j^\nu\times\Omega_k^\nu \quad \forall \alpha = (j,k)\in \N^2_0,
\]
the sequence of dyadic space-time strips in direction $\nu$, we can easily check that there exists $C>0$ such that
\begin{equation}\label{eq: sol_sharp_est1}
\| u^\sharp \|_{L^2 (\Pi_\alpha^\nu)} \leq C 2^{|\alpha| /2} \| f \|_{L^2(\R^{n - 1})},
\end{equation}
which in particular means that
\begin{equation}\label{eq:solSharpEst}
\sup_{k \in \N_0} \big[ 2^{-k/2} \| u^\sharp \|_{L^2 (\Sigma^\nu_k)} \big] \lesssim_T \| f \|_{L^2(\R^{n - 1})}.
\end{equation}
Here we denoted with $|\alpha|=k+j$ and also remember the definition of $\Sigma_k^\nu$ in \eqref{eq: strips}.
Observe that, in order to obtain solutions to \eqref{eq: schrodinger_R} of the form \eqref{eq:CGO}, we need the remainder $u^\flat$ to satisfy
\begin{equation}\label{eq: neumann}
	(i\partial_t+\Delta+2\nu\cdot\nabla- \wt{V})u^\flat=\wt{V}\,u^\sharp,\quad \textrm{in }\R\times\R^n.
\end{equation}
This correction term can again be constructed by a Neumann series argument, similar to that outlined in subsection \ref{sec: stationary states}, by obtaining a suitable estimate for the conjugated operator $i\partial_t+\Delta+2\nu\cdot\nabla$. The Neumann argument works for potentials $V\in L^\infty(\Sigma)$ satisfying the decay condition \eqref{eq: polynomial_decay_V_t}, and one can estimate the size of the remainder as
\begin{equation}\label{eq: sol_flat_est1} 
|\nu|^{1/2} \sup_{\alpha \in \N^2_0} \big[ 2^{-|\alpha|(1/2 - \theta)} \| u^\flat \|_{L^2 (\Pi_\alpha^\nu)} \big] \leq C  \| f \|_{L^2(\R^{n - 1})}, 
\end{equation}
for some $C>0$ and, in particular
\begin{equation}\label{eq:solFlatEst}   
|\nu|^{1/2} \sup_{k \in \N_0} \big[ 2^{-k(1/2 - \theta)} \| u^\flat \|_{L^2 (\Sigma^\nu_k)} \big] \lesssim_T  \| f \|_{L^2(\R^{n - 1})}. 
\end{equation}
It is easy to see that the super-exponential decay condition for the potential readily implies \eqref{eq: polynomial_decay_V_t}. 
We consider convenient to refer the reader to \cite{zbMATH07801151} for the details of the Neumann series argument, with the definition of the proper Banach spaces in which this is performed. Our statements above are a direct consequence of Theorem 2.9 in \cite{zbMATH07801151}.

At this point, we should also note that the estimates \eqref{eq:solSharpEst} and \eqref{eq:solFlatEst} can be expressed in terms of $L^2_TL^2_{-\delta}$-norms. Indeed, since $\nu=|\nu| Qe_n$ and in the strips $\Omega^\nu_k$ it holds $ \frac{1}{|x|} \leq \frac{|\nu|}{|x\cdot\nu|} \lec 2^{-k} $, we estimate
\[
\begin{aligned}
\|u^\sharp\|^2_{L^2_TL^2_{-\delta}} &= \int_{\Sigma} \langle x\rangle ^{-2\delta}|u^\sharp|^2
=\sum_{k\in\N_0} \int_{0}^T\int_{\Omega^\nu_k} \langle x\rangle ^{-2\delta}|u^\sharp|^2 
\lec \sum_{ k \in\N_0} \int_{0}^T \int_{\Omega^\nu_k} 2^{-k2\delta} |u^\sharp|^2 \\
&= \sum_{ k \in\N_0} 2^{-k (2\delta-1)} 2^{-k} \| u^\sharp \|_{L^2(\Sigma^\nu_k)}^2
\lec \sup_{k\in\N_0}\big[ 2^{-k}\| u^\sharp \|_{L^2(\Sigma^\nu_k)}^2 \big],
\end{aligned}
\]
since $\delta>1/2$. The above estimate and \eqref{eq:solSharpEst} imply that 
\begin{equation}\label{eq:solSharpEst2}
	\|u^\sharp\|_{L^2_TL^2_{-\delta}}\lec \sup_{k\in\N_0}\big[ 2^{-k/2}\| u^\sharp \|_{L^2(\Sigma^\nu_k)} \big] 
	 \lec_T  \| f \|_{L^2(\R^{n - 1})}.
\end{equation}
The same argument can be used along with \eqref{eq:solFlatEst} to show that, for $|\nu|$ big enough, 
\begin{equation}\label{eq:solFlatEst2}
	\|u^\flat\|_{L^2_TL^2_{-\delta}} \lec_T |\nu|^{-1/2} \| f \|_{L^2(\R^{n - 1})},
\end{equation}
with this inequality being valid for all $\delta>0$.
%

%%%%%%%SECTION%%%%%%%SECTION%%%%%%%%%SECTION%%%%%%%%
\section{Density of solutions}\label{sec: density}
In this section, we provide some density results that are crucial for our approach, since they allow for the distance to be used in the stability estimates. These are Runge approximation type results, showing that the corresponding special solutions that were
constructed in section \ref{sec: solutions_construction} can be approximated by physical solutions of the Schr\"odinger equation in the relevant weighted $L^2$ topologies.

We separate the density results for the case of time-independent and time-dependent potentials. Since a big part of the argument is identical we will only provide a rigorous proof for the more complicated case of time-dependent potentials. 

%%%%%%%%SUBSECTION%%%%%%%%%%%
\subsection{Time-dependent potentials}\label{subsec: approx_tdpotential}
In this case we will approximate CGO solutions by physical solutions in the $L^2_{w_{\delta, \ve}}$-topology, by fixing the weight $w_{\delta, \ve}(x) = e^{-2|x|^{1+\ve}}\langle x \rangle^{-2\delta}$. We consider potentials $V=V(t, x) \in L^1((0, T); L^\infty(\R^n))$ with $\|V\|_{w_{\delta, \ve}} < \infty$. See section \ref{subsec: spaces} for the definition of the spaces above.
For $\nu\in \R^n$, define the linear space of solutions
\[
\mathcal C_{V, \nu} := \left\{u\in L^2_{\textrm{loc}}(\R^{n+1}):(i\partial_t+\Delta- \wt{V})u=0,\; \sup_{\alpha \in \N_0^2} \big[ 2^{-|\alpha|/2} \| e^{- \nu \cdot \x} u \|_{L^2(\Pi_\alpha^\nu)} \big] <\infty\right\}, 
\]
and let
\begin{equation}\label{eq: space_exponential}
\mathcal C_V := \bigcup_{\nu\in\R^n}\mathcal C_{V, \nu}
\end{equation}
We will drop the notation to $\cal C$, whenever there is no ambiguity about the potential.
It is easy to see that CGO solutions of the form \eqref{eq:CGO} belong to $\mathcal C$. Indeed, if $\nu\in\R^n$ and 
\[
	\mathbf{u}=e^\varphi(u^\sharp+u^\flat),
\]
with $\varphi(t,x)=i|\nu|^2t+\nu\cdot x$ and $u^\sharp$, $u^\flat$ as constructed in Section \ref{sect: CGO}, then, by virtue of the estimates \eqref{eq: sol_sharp_est1} and \eqref{eq: sol_flat_est1} we have that
\[
\sup_{\alpha \in \N_0^2} \big[ 2^{-|\alpha|/2} \| e^{- \nu \cdot \x} \mathbf{u} \|_{L^2(\Pi_\alpha^\nu)}\big]\leq\sup_{\alpha \in \N_0^2} \big[ 2^{-|\alpha|/2} \| u^\sharp \|_{L^2(\Pi_\alpha^\nu)}\big] + \sup_{\alpha \in \N_0^2} \big[ 2^{-|\alpha|/2} \| u^\flat \|_{L^2(\Pi_\alpha^\nu)}\big]<\infty.
\]
Furthermore, we have the following inclusion.
%
%%%%%%%%LEMMA%%%%%%%%%%%%LEMMA%%%%%%%%%%%%%%%
\begin{lemma}\label{lem: C_subspace_H}
	The space $\mathcal C$
	 is a subspace of the Banach space $L^2_{w_{\delta, \ve}}(\Sigma)$.
\end{lemma}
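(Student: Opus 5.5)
Since $\mathcal C$ is a union of linear spaces $\mathcal C_{V,\nu}$, and $L^2_{w_{\delta,\ve}}(\Sigma)$ is a Hilbert (hence Banach) space by the discussion in subsection \ref{subsec: spaces}, the content of the lemma is that every $u\in\mathcal C_{V,\nu}$ (restricted to $\Sigma$) has finite $L^2_{w_{\delta,\ve}}(\Sigma)$ norm. Linearity of the span is inherited from the ambient space, so I will concentrate on this norm bound. Fix $\nu\in\R^n$ and $u\in\mathcal C_{V,\nu}$, and set $K:=\sup_{\alpha}2^{-|\alpha|/2}\|e^{-\nu\cdot x}u\|_{L^2(\Pi^\nu_\alpha)}<\infty$. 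The case $\nu=0$ is degenerate for the strips. I would handle it by noting that the defining condition can be read with any fixed unit direction (or by observing that for $\nu=0$ one may use $\Omega_k$ with $|x\cdot e_n|$), so I assume $\nu\neq0$ and write $e=\nu/|\nu|=Qe_n$.

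Since $t\in(0,T)$, only the time strips $\Upsilon_j$ with $2^{j-1}<T$ intersect $(0,T)$, i.e. finitely many $j\le j_T$. Decompose
\[
\|u\|_{L^2_{w_{\delta,\ve}}(\Sigma)}^2\le\sum_{j\le j_T}\sum_{k\in\N_0}\int_{\Pi^\nu_{(j,k)}\cap\Sigma}e^{-2|x|^{1+\ve}}\langle x\rangle^{-2\delta}e^{2\nu\cdot x}\,|e^{-\nu\cdot x}u|^2 .
\]
The key pointwise bound is $e^{2\nu\cdot x-2|x|^{1+\ve}}\le e^{2|\nu||x|-2|x|^{1+\ve}}\le C_{\nu,\ve}$, uniformly in $x\in\R^n$, since $|x|^{1+\ve}$ dominates the linear term. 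Here the super-linear power in the weight is used. In fact I will keep a bit more decay: $e^{2|\nu||x|-2|x|^{1+\ve}}\le C_{\nu,\ve}e^{-|x|^{1+\ve}}$.

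On $\Omega^\nu_k$ with $k\ge1$ we have $|x|\ge|x\cdot e|>2^{k-1}$. Hence the weight there is at most $C_{\nu,\ve}e^{-2^{(k-1)(1+\ve)}}$, and each term is bounded by
\[
C_{\nu,\ve}\,e^{-2^{(k-1)(1+\ve)}}\,2^{j+k}K^2 .
\]
Summing over $k$, the factor $e^{-2^{(k-1)(1+\ve)}}$ beats $2^k$; the finite sum over $j$ contributes a $T$-dependent constant. This gives $\|u\|_{L^2_{w_{\delta,\ve}}}\lesssim_{\nu,\ve,T}K<\infty$. Here $\langle x\rangle^{-2\delta}\le1$ is simply discarded. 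Measurability and the identification of $u$ with its restriction to $\Sigma$ are routine, since $u\in L^2_{\rm loc}$.

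The only genuine obstacle is that the strips $\Omega^\nu_k$ are unbounded in the directions orthogonal to $\nu$. The control on $|x|$ there comes only from below via $|x\cdot e|$, so one must make sure the weight absorbs $e^{2\nu\cdot x}$ uniformly also where $|x|$ is large but $x\cdot e$ is small. The uniform bound $\sup_x e^{2|\nu||x|-2|x|^{1+\ve}}<\infty$ resolves this, as it does not depend on the strip. The summability in $k$ then only requires the crude lower bound $|x|\ge 2^{k-1}$.
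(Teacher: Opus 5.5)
Your proof is correct and follows the paper's skeleton. You cut $\Sigma$ into the strips $\Pi^\nu_\alpha$, note that only finitely many $\Upsilon_j$ meet $(0,T)$, and let the super-exponential factor of $w_{\delta,\ve}$ absorb $e^{2\nu\cdot x}$ uniformly in $x$; that uniform absorption is what handles the unboundedness of $\Omega^\nu_k$ transversal to $\nu$.

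The one real difference is which part of the weight pays for the sum over $k$:
\begin{itemize}
\item The paper spends all of $e^{-2|x|^{1+\ve}}$ on the exponential, via the pointwise bound \eqref{eq: control_e_weight}. It then sums in $k$ using $\langle x\rangle^{-2\delta}\lesssim 2^{-2k\delta}$ on $\Omega^\nu_k$ together with $\delta>1/2$.
\item You discard $\langle x\rangle^{-2\delta}$ and keep a spare factor $e^{-|x|^{1+\ve}}$ for summability.
\end{itemize}
For the qualitative inclusion both routes work, and yours does not even need $\delta>1/2$.

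The paper's split, however, gives the sharp constant. \eqref{eq: control_e_weight} produces the factor $e^{\ve|\nu|^{(1+\ve)/\ve}}$, which is reused in \eqref{eq: CGO_solution_in_H_norm}. That factor yields the growth $e^{2\ve s^{(1+\ve)/\ve}}$ in Lemma \ref{lem: stability_tDep_1} and hence the logarithmic rate in Theorem \ref{th: stability_t_dependent}. Your constant for the squared norm is
$\sup_x e^{2|\nu||x|-|x|^{1+\ve}}=\exp\big(\ve(2|\nu|/(1+\ve))^{(1+\ve)/\ve}\big)$.
Its exponent is larger by a factor of order $2^{1/\ve}$, so your version would not reproduce those quantitative bounds. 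Reserving only $e^{-\eta|x|^{1+\ve}}$ with small $\eta>0$ would largely fix this.

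Finally, your worry about $\nu=0$ is unnecessary. With the strips as defined in \eqref{eq: d_strips_space}, $\Omega^0_0=\R^n$ and $\Omega^0_k=\emptyset$ for $k\ge1$, so the defining condition already gives $u\in L^2(\Sigma)$.
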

\begin{proof}
Start by observing that, for $\nu,x\in\R^n$,
\[
-|x|^{1+\ve}+\nu\cdot x\leq-|x|^{1+\ve}+|\nu|\, |x|\leq \ve |\nu|^{(1+\ve)/\ve}. 
\]
The last inequality is a consequence of the fact that the function $g(z)=-z^{1+\ve}+|\nu|z$ for $z\geq 0$, attains its maximum at $z_0=\left(\frac{|\nu|}{1+\ve}\right)^{1/\ve}$ with $g(z_0)=\ve\left(\frac{|\nu|}{1+\ve}\right)^{(1+\ve)/\ve}\leq \ve |\nu|^{(1+\ve)/\ve}$. 
Consequently,
\begin{equation}\label{eq: control_e_weight}
e^{-|x|^{1+\ve}}%\leq e^{\ve |\nu|^{(1+\ve)/\ve}} e^{-|\nu| |x|}
\leq e^{\ve |\nu|^{(1+\ve)/\ve}} e^{-\nu\cdot x}, 
\end{equation}
which implies that
\[
\|u\|_{L^2_{w_{\delta, \ve}}(\Sigma)}\leq e^{\ve |\nu|^{(1+\ve)/\ve}}\|\langle x\rangle^{-\delta}e^{-\nu\cdot x}u\|_{L^2(\Sigma)}.
\]
Also, since $\delta>1/2$, it holds that $\langle x\rangle^{-2\delta}\lesssim 2^{-2k\delta}$ in $\Omega_k^\nu$ for $k\in\N$, so that
\[
\begin{aligned}
\|u\|_{L^2_{w_{\delta, \ve}}(\Sigma)}^2&\lesssim_\nu \sum_{ k \in\N_0} 2^{-k (2\delta-1)} 2^{-k} \| e^{-\nu\cdot x}u \|_{L^2(\Sigma^\nu_k)}^2
\lec_{\nu,\delta}
\sup_{k\in\N_0}\big[ 2^{-k}\| e^{-\nu\cdot x}u \|_{L^2(\Sigma^\nu_k)}^2 \big].
\end{aligned}
\]
Finally, since $T<\infty$, there exists $j_0\in\N_0$ so that $2^{j_0-1}< T \leq 2^{j_0}$. Then $(0, T) \subset \cup_{j=0}^{j_0} \Upsilon_j$ and using Fubini, for any $k\in\N_0$ it holds that
\[
\| e^{-\nu \cdot x} u \|^2_{L^2(\Sigma^\nu_k)} = \int_{\Omega_k^\nu} |e^{-\nu\cdot x}|^2 \sum_{j=0}^{j_0} \|u_j\|_{L^2(\Upsilon_j)}^2
\lec_T \sum_{j=0}^{j_0} 2^{-j} \| e^{-\nu\cdot x} u\|^2_{L^2(\Upsilon_j\times\Omega_k^\nu)}
\lesssim \sup_{j\in\N_0}2^{-j}\|e^{-\nu\cdot x}  u\|^2_{L^2(\Upsilon_j\times\Omega_k^\nu)}.
\]
Bringing all pieces together, we get that
\[
\|u\|_{L^2_{w_{\delta, \ve}}(\Sigma)}\lesssim_{\nu,\delta,T} \sup_{\alpha \in \N_0^2} \big[ 2^{-|\alpha|/2} \| e^{- \nu \cdot \x} u \|_{L^2(\Pi_\alpha^\nu)} \big] < \infty.
\]
\end{proof}
We remark that by means of the estimates \eqref{eq:solSharpEst2} and \eqref{eq:solFlatEst2}, along with the inequality \eqref{eq: control_e_weight}, we can also get the following useful bound  
	\begin{equation}\label{eq: CGO_solution_in_H_norm}
	\|\mathbf{u}\|_{L^2_{w_{\delta, \ve}}(\Sigma)} \leq e^{\ve |\nu|^{(1+\ve)/\ve}} (\|u^\sharp\|_{L^2_TL^2_{-\delta}} + \|u^\flat\|_{L^2_TL^2_{-\delta}}) 
	\lec_T e^{\ve |\nu|^{(1+\ve)/\ve}} \|f\|_{L^2(\R^{n-1})}.
	\end{equation}

An important ingredient for the proof of the density result is an integration-by-parts formula for exponentially growing solutions due to Caro and Ruiz \cite{zbMATH07801151}, which we restate here for reader's convenience. 
%
%%%%%PROPOSITION%%CR%%%%%%
\begin{proposition}\cite[Proposition 4.9]{zbMATH07801151}\label{prop:CGO-PhS_int-by-parts}\sl 
Consider $F \in L^2(\Sigma)$ such that 
there exists $c > 0$ so that $e^{c|\x|} F \in L^2(\Sigma)$.
Let $u \in C([0,T]; L^2(\R^n))$ satisfy the conditions
\[
\left\{
		\begin{aligned}
		& (i\partial_\tm + \Delta) u = F & & \textnormal{in} \enspace \Sigma, \\
		& u(0, \centerdot) = u(T, \centerdot) = 0 &  & \textnormal{in} \enspace \R^n.
		\end{aligned}
	\right.
\]
For every $\nu \in \R^n \setminus \{ 0 \}$ with $|\nu| < c$,
consider a measurable function 
$v^\nu : \R \times \R^n \rightarrow \C$ such that
\[ \sup_{\alpha \in \N_0^2} \big[ 2^{-|\alpha|/2} \| e^{- \nu \cdot \x} v^\nu \|_{L^2(\Pi_\alpha^\nu)} \big] + \sum_{\alpha \in \N_0^2} 2^{|\alpha| \theta} \| e^{- \nu \cdot \x} (i\partial_\tm + \Delta) v^\nu \|_{L^2(\Pi_\alpha^\nu)} < \infty, \]
for some $\theta \in (0, 1/2)$. Then,
\[\int_\Sigma (i\partial_\tm + \Delta) u \overline{v^\nu} = \int_\Sigma u \overline{(i\partial_\tm + \Delta)v^\nu} \, \]
for all $\nu \in \R^n \setminus \{ 0 \}$ with $|\nu| < c$.
\end{proposition}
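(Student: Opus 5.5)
The plan is a cutoff argument in space. To make the boundary terms at spatial infinity vanish, we first show that $u$ inherits exponential decay from $F$, thanks to the vanishing initial and final data. Throughout, fix $\nu$ with $0<|\nu|<c$ and choose $b$ with $|\nu|<b<c$.

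\emph{Step 1: exponential decay of $u$.} We aim to prove $e^{b|\x|}u\in L^2(\Sigma)$ and, locally uniformly, $e^{b|\x|}\nabla u\in L^2(\Sigma)$.
\begin{itemize}
\item Extend $u$ and $F$ by zero outside $(0,T)$. Since $u(0,\cd)=u(T,\cd)=0$, the extension still satisfies $(i\d_\tm+\Delta)u=F$ in $\R\times\R^n$; this is checked against test functions, using $u\in C([0,T];L^2)$.
\item Take the space-time Fourier transform: $(-\tau-|\xi|^2)\,\wh u(\tau,\xi)=\wh F(\tau,\xi)$.
\item Since $e^{c|\x|}F\in L^2$ and $F$ has compact time support, $\wh F$ extends holomorphically to $\xi\in\R^n+i\{|\eta|<c\}$, with $L^2$ bounds on each slice. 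The same holds in $\tau$ for all complex $\tau$, by Paley--Wiener.
\item Also $\wh u$ is entire in $\tau$, because $u$ is compactly supported in time. Hence on the complexified characteristic set $\{\tau+\zeta\cdot\zeta=0\}$ the function $\wh F$ vanishes, and $\wh u=\wh F/(-\tau-\zeta\cdot\zeta)$ extends holomorphically to the tube $|\Im\zeta|<c$.
\item What remains is quantitative: $L^2$ bounds of $\wh u(\cdot,\cdot+i\eta)$ uniform in $|\eta|\le b$. Near the zero set of the symbol, divide by $-\tau-\zeta\cdot\zeta$ using a Cauchy-integral (division) estimate in $\tau$. Away from it, the symbol is bounded below by $1+|\xi|^2$, which also yields the gradient bound.
\item A Paley--Wiener argument in $x$ then gives $e^{\eta\cdot \x}u,\ e^{\eta\cdot\x}\nabla u\in L^2(\Sigma)$ for all $|\eta|\le b$, hence the claim.
\end{itemize}
This step is the main obstacle. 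If the holomorphic division becomes delicate, I would instead adapt the Caro--Ruiz a priori estimate for the conjugated operator $i\d_\tm+\Delta+2\eta\cdot\nabla$ applied to $e^{\eta\cdot\x}u$, again using $u(0)=u(T)=0$.

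\emph{Step 2: cutoff identity.} Let $\chi_R(x)=\chi(x/R)$ with $\chi\in C_c^\infty$ and $\chi=1$ near $0$. Since $\chi_R v^\nu$ has compact support in $x$ and $u$ vanishes at $t=0,T$, a mollification in time and distributional pairing in space give
\[
\int_\Sigma F\,\ovl{\chi_R v^\nu}=\int_\Sigma u\,\ovl{\chi_R (i\d_\tm+\Delta)v^\nu}-\int_\Sigma \big(2\nabla u\cdot\nabla\chi_R+u\,\Delta\chi_R\big)\ovl{v^\nu}.
\]
In the last term we move the derivative onto $u$ (via $2\nabla\chi_R\cdot\nabla v^\nu$ and one more integration by parts), so that no derivative of $v^\nu$ is needed.

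\emph{Step 3: passage to the limit $R\to\infty$.}
\begin{itemize}
\item \emph{Commutator terms.} These are supported in $\{|x|\sim R\}$ and carry a factor $R^{-1}$. There $|v^\nu|\le e^{|\nu||x|}\,|e^{-\nu\cdot\x}v^\nu|$, and the dyadic hypothesis gives $\|e^{-\nu\cdot\x}v^\nu\|_{L^2(\{|x|\sim R\}\times(0,T))}\lesssim R^{C}$. By Cauchy--Schwarz, using Step 1, they are $\lesssim R^{C}e^{-(b-|\nu|)R}\to0$.
\item \emph{Left-hand side.} We have $\int F\ovl{\chi_Rv^\nu}\to\int F\ovl{v^\nu}$ by dominated convergence, since summing over the dyadic strips gives $\sum_\alpha \|e^{c|\x|}F\|_{L^2(\Pi_\alpha^\nu)}\,e^{-(c-|\nu|)2^{k-1}}2^{|\alpha|/2}\|\cdots\|<\infty$.
\item \emph{Main term on the right.} Similarly, $\int u\,\ovl{\chi_R(i\d_\tm+\Delta)v^\nu}\to\int u\,\ovl{(i\d_\tm+\Delta)v^\nu}$, by the summability hypothesis $\sum_\alpha 2^{|\alpha|\theta}\|e^{-\nu\cdot\x}(i\d_\tm+\Delta)v^\nu\|_{L^2(\Pi_\alpha^\nu)}<\infty$ together with $e^{b|\x|}u\in L^2$.
\end{itemize}
Together these yield the identity.
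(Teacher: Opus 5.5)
The paper does not prove this proposition; it quotes it from Caro--Ruiz. So I am judging your argument on its own terms.

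\textbf{What works.} The overall plan is sound, and the $L^2$ half of Step~1 goes through. Write $p=-\tau-\zeta\cdot\zeta$ with $\zeta=\xi+i\eta$. For real $(\tau,\xi)$ one has $|p|\ge|\mathrm{Im}\,p|=2|\xi\cdot\eta|$, so no division is needed where $|\xi\cdot\eta|\ge 1/2$. Where $|\xi\cdot\eta|<1/2$, the $\tau$-root of $p$ lies within distance $1$ of the real axis, and away from a unit neighbourhood of its real part $|p|\ge 1$. The maximum principle on a disc of radius $2$, together with the sub-mean-value property of $|\wh F|^2$ in $\tau$, then gives $\|\wh u(\cdot,\cdot+i\eta)\|_{L^2}\lesssim e^{CT}\|e^{c|\x|}F\|_{L^2(\Sigma)}$ uniformly in $|\eta|\le b$.

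Steps~2--3 are also correct \emph{provided} $e^{b|\x|}\nabla u\in L^2(\Sigma)$, and that bound is genuinely needed. Moving the derivative off $v^\nu$ is the right move, since nothing in the hypotheses makes $\nabla v^\nu$ locally square integrable. But then the commutator $\int\nabla u\cdot\nabla\chi_R\,\ovl{v^\nu}$ can only be killed by exponential decay of $\nabla u$.

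\textbf{The gap: the gradient bound.} Your justification is that away from the zero set the symbol is bounded below by $1+|\xi|^2$. This is false. Take $\tau=|\eta|^2-|\xi|^2$ and $\xi\cdot\eta=d$. Then $p=-2id$ for arbitrarily large $|\xi|$, at a distance from the zero set that does not depend on $|\xi|$. There $|\zeta\,\wh u|\approx|\xi|\,|\wh F|/(2d)$, and no division in $\tau$ can supply the missing factor $|\xi|^{-1}$. That factor is a one-derivative smoothing gain: bounding $|\xi|\wh F$ in $L^2$ directly would require $\nabla F$, which you do not have.

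\textbf{Repair via a different division.} The bound is true. Along the complex line $z\mapsto\zeta+z\xi/|\xi|$ the symbol has derivative $\approx-2|\xi|$. If $|p(\tau,\zeta)|\le r|\xi|/4$ and $|\xi|\ge 2$, then $|p|\ge r|\xi|$ on $|z|=r$. The maximum principle then gives $|\xi|\,|\wh u(\tau,\zeta)|\le r^{-1}\max_{|z|=r}|\wh F(\tau,\zeta+z\xi/|\xi|)|$. The right-hand side is $L^2$-controlled by plurisubharmonicity of $|\wh F|^2$ as long as $b+r<c$.

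\textbf{Simpler repair: avoid $\nabla u$ entirely.} The zero extension of $u$ solves the equation on $\R^{1+n}$. Hence $u_\epsilon=u*\rho_\epsilon$ is smooth, supported in $[-\epsilon,T+\epsilon]$, and solves $(i\d_\tm+\Delta)u_\epsilon=F*\rho_\epsilon$. Moreover $e^{b|\x|}\nabla u_\epsilon\in L^2$ follows from $e^{b|\x|}u\in L^2$ alone. Then:
\begin{enumerate}
\item Pair the distribution $(i\d_\tm+\Delta)v^\nu$ with the test function $\chi_R u_\epsilon$.
\item Let $R\to\infty$, using your commutator estimate with $\nabla u_\epsilon$ in place of $\nabla u$.
\item Let $\epsilon\to0$, using convergence of $u_\epsilon$ and $F*\rho_\epsilon$ in exponentially weighted $L^2$. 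Test this against the dyadic bounds for $v^\nu$ and $(i\d_\tm+\Delta)v^\nu$, which the hypothesis gives on all of $\R\times\R^n$.
\end{enumerate}
This closes the argument using only the decay of $u$ itself.
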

%%%%%
%
We now proceed to the main result of this subsection, which is summarised in the next lemma.

%%%%LEMMA%%%%%LEMMA%%%%%%%%
\begin{lemma}\label{lem: density_of_solutions_time}
For the spaces of solutions \eqref{eq: space_physical_solutions} and \eqref{eq: space_exponential}, it holds that 
\[
\cal{C} \subset \ovl{\cal{P}}^{\|\cdot\|_{L_{w_{\delta, \ve}}^2}}.
\]
This is, physical solutions approximate solutions in $\cal C$, in the topology of $L_{w_{\delta, \ve}}^2(\Sigma)$. In particular, physical solutions approximate the CGO solutions in $L_{w_{\delta, \ve}}^2(\Sigma)$.
\end{lemma}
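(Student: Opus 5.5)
We argue by Hahn--Banach duality, the standard route for Runge-type results. Since $L^2_{w}(\Sigma)$ with $w=w_{\delta,\ve}$ is a Hilbert space and $\cal P\subset \cal C\subset L^2_w(\Sigma)$ by Lemma \ref{lem: C_subspace_H}, it suffices to show the following: if $h\in L^2_w(\Sigma)$ satisfies $\langle u,h\rangle_w=0$ for all $u\in\cal P$, then $\langle \mathbf u,h\rangle_w=0$ for all $\mathbf u\in\cal C$. Write $F\defeq w h$. Then $F\in L^2_{w^{-1}}(\Sigma)$, so $e^{|x|^{1+\ve}}\langle x\rangle^{\delta}F\in L^2(\Sigma)$. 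In particular $e^{c|x|}F\in L^2(\Sigma)$ for every $c>0$, and the orthogonality condition reads $\int_\Sigma u\,\ovl F=0$ for all $u\in\cal P$.

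First, we introduce the adjoint backward problem. Let $z\in C([0,T];L^2(\R^n))$ solve $(i\d_t+\Delta-\ovl V)z=F$ in $\Sigma$ with $z(T,\cd)=0$. This is solvable by Duhamel, since $F\in L^1_TL^2$ and $\ovl V\in L^1_TL^\infty$. For any $u\in\cal P$ with $u(0,\cd)=f$, pairing the equation for $z$ with $u$ and integrating by parts in time gives
\[
\int_\Sigma u\,\ovl F=\int_\Sigma u\,\ovl{(i\d_t+\Delta-\ovl V)z}=\int_\Sigma \ovl{z}\,(i\d_t+\Delta-V)u \pm i\int_{\R^n} f\,\ovl{z(0,\cd)}=\pm i\int_{\R^n} f\,\ovl{z(0,\cd)}.
\]
This is legitimate for $L^2$ solutions by a standard density/duality argument. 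As $f\in L^2$ is arbitrary, $z(0,\cd)=0$. Hence $z$ satisfies $(i\d_t+\Delta)z=\ovl V z+F=:G$ with $z(0)=z(T)=0$.

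Next, we check that $G$ decays fast enough. We have $\ovl V z\in L^2$, and since $|V|\le \|V\|_{\delta,\ve}e^{-2|x|^{1+\ve}}$, we get $e^{c|x|}\ovl V z\in L^2(\Sigma)$ for every $c$. Together with the bound on $F$, this gives $e^{c|x|}G\in L^2(\Sigma)$ for all $c>0$.

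We then take $\mathbf u\in\cal C_{V,\nu}$ and apply Proposition \ref{prop:CGO-PhS_int-by-parts} with $u=z$, $v^\nu=\mathbf u$ and $c>|\nu|$. The weighted growth hypothesis on $\mathbf u$ is exactly the defining condition of $\cal C_{V,\nu}$. For the second hypothesis, $(i\d_t+\Delta)\mathbf u=\wt V\mathbf u$, and we need
\[
\sum_\alpha 2^{|\alpha|\theta}\|e^{-\nu\cdot x}\wt V\mathbf u\|_{L^2(\Pi^\nu_\alpha)}<\infty.
\]
Since $\wt V$ vanishes outside $t\in(0,T)$, only finitely many $j$ contribute. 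In $x$, the super-exponential decay of $V$ gives $\sup_{\Omega^\nu_k}|V|\lesssim e^{-2^{k}}$ (roughly), which beats the factor $2^{k(\theta+1/2)}$ coming from the growth bound. Hence the sum converges. The proposition then yields
\[
\int_\Sigma G\,\ovl{\mathbf u}=\int_\Sigma z\,\ovl{\wt V\mathbf u},
\]
that is, $\int_\Sigma \ovl V z\,\ovl{\mathbf u}+\int_\Sigma F\ovl{\mathbf u}=\int_\Sigma z\,\ovl V\,\ovl{\mathbf u}$. Therefore $\int_\Sigma F\,\ovl{\mathbf u}=0$, i.e.\ $\langle h,\mathbf u\rangle_w=0$, which concludes the argument. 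The statement about CGO solutions then follows from the earlier observation that they lie in $\cal C$.

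The main obstacle is the step that justifies the vanishing of $z(0)$. This requires the time integration by parts for merely $C_TL^2$ solutions; we would handle it by regularizing $f$ and using the boundedness of the solution maps. A second point needing care is verifying the summability hypothesis of Proposition \ref{prop:CGO-PhS_int-by-parts} uniformly over $\alpha$, where the super-exponential decay of $V$ is essential.
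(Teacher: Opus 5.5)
Your proposal is correct and follows essentially the same route as the paper: the orthogonal complement in $L^2_{w_{\delta,\ve}}(\Sigma)$, the backward problem $(i\partial_t+\Delta-\overline V)z=w h$ with $z(T,\cdot)=0$, vanishing of $z(0,\cdot)$ by duality with physical solutions, and then Proposition \ref{prop:CGO-PhS_int-by-parts}, with the super-exponential decay of $V$ supplying both the $e^{c|x|}$-integrability and the summability hypothesis. The differences are cosmetic: you sketch the time integration by parts giving $z(0,\cdot)=0$ yourself where the paper cites \cite[Lemma 4.5]{zbMATH07801151}, and the inclusion $\mathcal P\subset\mathcal C$ you mention is not needed; the argument only uses $\mathcal P\subset L^2_{w_{\delta,\ve}}(\Sigma)$, which holds because $w_{\delta,\ve}\le 1$.
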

%%%%%%
%

\begin{proof}%[Proof of Lemma \ref{lem: density_of_solutions}]
Observe first that $\ovl{\cal{P}}^{\|\cdot\|_{L_{w_{\delta, \ve}}^2}} $ is a closed subspace of the Hilbert space $L_{w_{\delta, \ve}}^2(\Sigma)$. 
As a result, it holds that 
\[
\big({\cal{P}^\perp}\big)^{\perp} = \ovl{\cal{P}}^{\|\cdot\|_{L_{w_{\delta, \ve}}^2}},
\]
where the orthogonal complement is taken with respect to the inner product in $L_{w_{\delta, \ve}}^2(\Sigma)$, as defined in \eqref{eq: inner_product_weighted}; this is,
\[
\cal{P}^{\perp} = 
\Big\{ v\in L_{w_{\delta, \ve}}^2(\Sigma): \,\, \langle v, u \rangle_{w_{\delta, \ve}}=0 \quad \forall u\in C_T L^2 \,\, \textup{such that} \,\, (i\d_t +\Delta - V)u=0 \Big\}, 
\]
and
\[
\big({\cal{P}}^{\perp}\big)^{\perp} = 
\Big\{ g\in L_{w_{\delta, \ve}}^2(\Sigma) : \, \langle g, v\rangle = 0 , \quad \forall v\in \mathcal P ^\perp \Big\}.
\]
We claim that 
\[
\cal{C} \subset \big({\cal{P}}^{\perp}\big)^{\perp}.
\]
Let's start by characterising the orthogonal complement of $\mathcal P$. Indeed, for any $v\in \cal{P}^{\perp}$, we have that 
\begin{equation}\label{eq: inner_product_zero}
0=\langle v, u\rangle_{w_{\delta, \ve}}=\int_\Sigma w_{\delta,\ve}(x)\,v(t,x)\,\overline{u(t,x)}\, \dd t \dd x
\end{equation}
for any physical solution $u\in C_TL^2$ of the Schr\"odinger equation. Set now $G(t, x)\defeq w_{\delta,\ve}(x)\,v(t, x)$ for $v\in \cal{P}^{\perp}$. 
We verify that $G\in (L^2_{w_{\delta, \ve}} (\Sigma))^* =L^2_{w_{\delta, \ve}^{-1}}(\Sigma)$. Indeed,
\[
\int_{\Sigma}  w_{\delta,\ve}^{-1}(x)\,|G(t, x)|^2 \, \dd t \, \dd x
= \int_{\Sigma}w_{\delta,\ve}(x)\, |v(t, x)|^2 \dd t \, \dd x = \|v\|_{L^2_{w_{\delta, \ve}}(\Sigma)}< \infty. 
\]
In particular, since $w_{\delta,\ve}(x)\leq 1$, this implies that $G\in L^1((0, T); L^2(\R^n))$, so that there exists a unique solution $\Phi\in C([0, T]; L^2(\R^n))$ to the following artificial problem: 
\begin{equation}\label{eq: art_prob}
\begin{dcases}
(i\d_t+\Delta - \ovl{V})\Phi = G, \quad &\textup{in}\,\, \Sigma, \\
\Phi(T, \cd) = 0, &\textup{in}\,\, \R^n.
\end{dcases} 
\end{equation}
Then, \eqref{eq: inner_product_zero} can equivalently be written as $\displaystyle \int_\Sigma \ovl{G} u = 0$ for all physical solutions $u\in C_TL^2$. Using \cite[Lemma 4.5]{zbMATH07801151}, we conclude that $\Phi(0, \cd)=0$. 
Now, letting $\psi\in\mathcal C$ we have that
%%%%
\begin{equation}\label{eq: int_by_parts}
\begin{aligned}
\langle v, \psi\rangle_{w_{\delta, \ve}} &= \int_\Sigma  w_{\delta,\ve}(x)\,v(t, x)\overline{\psi(t, x)}  \, \dd t  \dd x \\
&= \int_\Sigma \ovl{\psi} G = \int_\Sigma (i\d_t+\Delta - \ovl{V}) \Phi \ovl{\psi} 
= \int_\Sigma \Phi \ovl{(i\d_t+\Delta - V)\psi} = 0.
\end{aligned}
\end{equation}
%%%%
%
In the second to last equality we have used the integration by parts of Proposition \ref{prop:CGO-PhS_int-by-parts}. In the last equality we used the fact that $\psi$ is a solution of the Schr\"odinger equation. Hence, we get that $\psi \perp v$
for any $v\in\cal{P}^{\perp}$, and therefore we conclude that $\psi\in \big(\mathcal P ^\perp)^\perp=\ovl{\cal{P}}^{L^2_{w_{\delta, \ve}}}$.

To justify the use of Proposition \ref{prop:CGO-PhS_int-by-parts}, on the one hand notice that, for any $c>0$ we have $e^{c|x|}(i\partial_t+\Delta)\Phi =e^{c|x|} (G+\overline V \Phi) \in L^2(\Sigma)$. The fact $e^{c|x|}\ovl{V} \Phi \in L^2(\R^n)$ is trivial due to  the super-exponential decay of $V$ along with $\Phi\in L^2(\Sigma)$. For the remaining term, first observe that $e^{c|x|} \leq e^{|x|^{1+\ve}}$ whenever $|x|\geq c^{1/\ve}$. Then we estimate 
\begin{align*}
\int_{\Sigma} |e^{c|x|} G|^2 &\leq \int_\Sigma |\langle x\rangle^{-2\delta} e^{-2|x|^{1+\ve}} e^{c|x|}v |^2 \\
&\leq e^{2c^{1/\ve +1 }} \int_{(0, T)\times \{ |x| < c^{1/\ve} \}} | \langle x\rangle^{-2\delta} e^{-2|x|^{1+\ve}} v|^2 
+ \int_{(0, T)\times \{ |x| \geq c^{1/\ve}\}} |\langle x \rangle^{-2\delta} e^{-|x|^{1+\ve}} v|^2 \\
&\leq e^{2c^{1/\ve +1 }} \int_\Sigma \langle x \rangle^{-2\delta} e^{-2|x|^{1+\ve}} |v|^2 
= e^{2c^{1/\ve +1 }} \|v\|_{L^2_{w_{\delta, \ve}}} <\infty,
\end{align*}
since  $v \in L^2_{w_{\delta, \ve}}$.

On the other hand, since $\psi \in \cal C$, we have  $\psi \in \mathcal C_\nu$ for some $\nu\in \R^n$ and $(i\partial_t+\Delta)\psi = \wt{V}\psi$. Recall that $\wt{V}$ denotes the trivial extension of $V$ to $\R\times\R^n$. 
Thus
\begin{align*}
\sum_{\alpha \in \N_0^2} 2^{|\alpha| \theta} \| e^{- \nu \cdot \x} (i\partial_\tm + \Delta) \psi \|_{L^2(\Pi_\alpha^\nu)} &= \sum_{\alpha \in \N_0^2} 2^{|\alpha| \theta} \| e^{- \nu \cdot \x} \wt{V} \psi \|_{L^2(\Pi_\alpha^\nu)} \\
&\leq \sup_{\alpha \in \N_0^2} \big[ 2^{-|\alpha|/2} \| e^{- \nu \cdot \x} \psi \|_{L^2(\Pi_\alpha^\nu)} \big] \Bigg( \sum_{\alpha \in \N_0^2} 2^{|\alpha| (\theta+1/2)} \| \wt{V}  \|_{L^\infty(\Pi_\alpha^\nu)} \Bigg) <\infty,
\end{align*}
where we have used that
\[
\sum_{\alpha \in \N_0^2} 2^{|\alpha| (\theta+1/2)} \| \wt{V}  \|_{L^\infty(\Pi_\alpha^\nu)} \lesssim_T \sum_{k\in\N_0} 2^{k (\theta+1/2)} \| V \|_{L^\infty(\Sigma_k^\nu)} \leq \| |x|V\|_{L^\infty(\Sigma)} \sum_{k\in\N_0} 2^{k (\theta+1/2)}2^{-k} < \infty
\]
for all $\theta\in(0,1/2)$. The norm $ \| |x|V\|_{L^\infty(\Sigma)}$ above is finite due to super-exponential decay. One could alternatively use directly the decay \eqref{eq: polynomial_decay_V_t}. This ends the proof of the density lemma.
\end{proof}
%%%%%%%%%%%%
%

%%%%%%SUBSECTION%%%%%%%%%
\subsection{Time-independent potentials}\label{subsec:approx_tindep}
We turn now our attention to the case of potentials $V\in L^\infty(\R^n)$ with $\|V\|_{w_\delta} < \infty$. In this case we will approximate time-harmonic solutions of the Schr\"odinger equation by physical solutions in the $L^2_{w_\delta}$-topology. Recall from subsection \ref{subsec: spaces} that $w_\delta(x)= \langle x \rangle^{-2\delta}$.

To that end, we introduce the following space of more general solutions to the Schr\"odinger equation
\begin{equation}\label{eq: space_time_harmonic}
\cal{S}_V := \{ \psi \in C([0, T]; B^*(\R^n)) :  (i\partial_t+\Delta-V) \psi = 0 \}.
\end{equation}
Obviously, the time-harmonic solutions $\psi(t,x) = e^{-i\lambda^2t}w(x)$ where $w\in B^*(\R^n)$ with $ (\Delta + \lambda - V)w =0$ and $\lambda>0$, belong to $\cal{S}_V$. Recall \eqref{eq: time-harmonic solutions} for the definition of these kind of solutions. When there is no ambiguity about the potential, we will just write $\cal S$. The density result is the following: 
%
%%%%%%LEMMA%%%%LEMMA%%%%%%%%
\begin{lemma}\label{lem: density_of_solutions_t_independent}
For the spaces of solutions \eqref{eq: space_physical_solutions} and \eqref{eq: space_time_harmonic}, it holds that 
\[
\cal{S} \subset \ovl{\cal{P}}^{\|\cdot\|_{L^2_{w_\delta}}}.
\]
This is, physical solutions approximate $B^*(\R^n)$-solutions in the topology of $L^2_{w_\delta}$ and in particular, they approximate stationary-state solutions in this topology.
\end{lemma}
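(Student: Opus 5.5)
We follow the duality scheme of Lemma \ref{lem: density_of_solutions_time}, now in the Hilbert space $L^2_{w_\delta}(\Sigma)$. The closure $\ovl{\cal P}$ of $\cal P$ in $L^2_{w_\delta}$ equals $(\cal P^\perp)^\perp$, so it suffices to show that every $\psi\in\cal S$ is orthogonal to every $v\in\cal P^\perp$.

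Fix $v\in\cal P^\perp$ and set $G\defeq w_\delta v=\langle x\rangle^{-2\delta}v$. As before, $G\in L^2_{w_\delta^{-1}}(\Sigma)$, so in particular $\langle x\rangle^{\delta}G\in L^2(\Sigma)$ and $G\in L^1((0,T);L^2(\R^n))$. Let $\Phi\in C([0,T];L^2(\R^n))$ be the solution of the backward problem \eqref{eq: art_prob} with $\ovl V$ in place of the time-dependent potential. Orthogonality of $v$ to all physical solutions says $\int_\Sigma \ovl G u=0$ for all $u\in\cal P$. Then \cite[Lemma 4.5]{zbMATH07801151} gives $\Phi(0,\cd)=0$; its proof is only a duality argument that is insensitive to time-independence.

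For $\psi\in\cal S$ we would then write
\[
\langle v,\psi\rangle_{w_\delta}=\int_\Sigma G\ovl\psi=\int_\Sigma (i\d_t+\Delta-\ovl V)\Phi\,\ovl\psi=\int_\Sigma\Phi\,\ovl{(i\d_t+\Delta-V)\psi}=0 .
\]
The key step, and the main obstacle, is justifying the integration by parts in the third equality. Here $\psi$ is only in $C([0,T];B^*)$, a space allowing growth of order $|x|^{1/2}$, while $\Phi$ vanishes at $t=0,T$. Proposition \ref{prop:CGO-PhS_int-by-parts} is not directly applicable, since it requires exponential decay of the source.

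We would prove a polynomial analogue. First establish a weighted estimate $\Phi\in L^2((0,T);B(\R^n))$, or at least $\Phi\in L^2_TL^2_\delta$. The natural route is a weighted energy estimate: since $F\defeq(i\d_t+\Delta)\Phi=G+\ovl V\Phi$ satisfies $\langle x\rangle^{\delta}F\in L^2_TL^2$, and $\Phi$ vanishes at both endpoints, one can compute $\d_t\int\langle x\rangle^{2\delta}|\Phi|^2$. The commutator term $\int\nabla\langle x\rangle^{2\delta}\cdot\nabla\Phi\,\ovl\Phi$ is the difficulty. If it cannot be controlled directly, we would instead use a truncation.

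The truncation argument runs as follows. Take a cutoff $\chi_R(x)=\chi(x/R)$ and apply the ordinary integration by parts to $\chi_R\ovl\psi$, which is legitimate because the boundary terms in $t$ vanish and $\chi_R\psi$ is compactly supported in $x$ (after mollifying in $t$ if needed, using the equation to gain regularity). The error is
\[
\int_\Sigma\Phi\,\ovl{(2\nabla\chi_R\cdot\nabla\psi+\Delta\chi_R\,\psi)} .
\]
The term with $\Delta\chi_R$ is bounded by $R^{-2}\|\Phi\|_{L^2(R<|x|<2R)}\,R^{1/2}\|\psi\|_{B^*}$, which tends to $0$. For the gradient term we would move the derivative onto $\Phi$, or use local elliptic-type bounds $\|\nabla\psi\|_{L^2(D_j)}\lec 2^{j/2}$, which hold for time-harmonic $\psi$ and can be obtained from the equation in general. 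The result is a bound of the form $R^{-1}R^{1/2}\|\Phi\|_{L^2_T L^2(|x|\sim R)}$. This vanishes as $R\to\infty$ provided $\sum_j 2^{j/2}\|\Phi\|_{L^2_TL^2(D_j)}<\infty$, which is exactly the $B$-type decay of $\Phi$. We expect to obtain this decay from the weighted estimate together with $\|V\|_\delta<\infty$ and $\delta>1/2$, via \eqref{eq: bstarl2delta}.

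Once the limit is justified, the final equality uses $(i\d_t+\Delta-V)\psi=0$. This gives $\psi\perp\cal P^\perp$, hence $\psi\in\ovl{\cal P}^{L^2_{w_\delta}}$. In particular this applies to the time-harmonic solutions \eqref{eq: time-harmonic solutions}.
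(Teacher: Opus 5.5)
Your architecture matches the paper's: orthogonality in $L^2_{w_\delta}$, the backward problem \eqref{eq: art_prob}, $\Phi(0,\cdot)=0$ via \cite[Lemma 4.5]{zbMATH07801151}, and then the identity $\int_\Sigma(i\partial_t+\Delta-\ovl V)\Phi\,\ovl\psi=\int_\Sigma\Phi\,\ovl{(i\partial_t+\Delta-V)\psi}$ for $\Phi\in C_TL^2$ vanishing at $t=0,T$ and $\psi\in C_TB^*$. The paper does not prove this identity. It takes it from \cite[Lemma 3.2]{zbMATH08122191}, which is exactly the polynomial-weight substitute for Proposition \ref{prop:CGO-PhS_int-by-parts} you are looking for.

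Your own truncation justification has a gap for general $\psi\in\mathcal S$. The bound $\|\nabla\psi\|_{L^2(D_j)}\lec 2^{j/2}$ holds for time-harmonic $\psi$, by Caccioppoli with $\Delta w=(V-\lambda^2)w$. It cannot be ``obtained from the equation in general'', because the Schr\"odinger flow gains only half a derivative locally. Already for $V=0$, high-frequency wave packets and the closed graph theorem give $f\in L^2$ with $\psi=e^{it\Delta}f\in\mathcal S$ but $\nabla\psi\notin L^2((0,T)\times\{|x|<1\})$. Moving the derivative onto $\Phi$ needs gradient bounds on $\Phi$ that you have not proved; the natural inhomogeneous smoothing bound $\|\nabla\Phi\|_{L^2_TL^2(|x|\sim R)}\lec R^{1/2}$ only gives an $O(1)$ error.

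Mollifying in time is the right repair. It is legitimate precisely because $V$ is time-independent: $\psi_h=\rho_h*_t\psi$ still solves the equation, and $\Delta\psi_h=-i\partial_t\psi_h+V\psi_h$ gives $\|\nabla\psi_h(t)\|_{L^2(D_j)}\lec h^{-1/2}2^{j/2}$. However, $\psi_h$ is only defined on $[h,T-h]$. This produces endpoint terms $\int_{\R^n}\Phi(h,\cdot)\chi_R\ovl{\psi_h(h,\cdot)}$ of size $\|\Phi(h,\cdot)\|_{L^2}R^{1/2}$, so $R\to\infty$ now competes with $h\to0$. You must couple the two limits:
\begin{itemize}
\item By Duhamel and $\Phi(0,\cdot)=0$, $\|\Phi(h,\cdot)\|_{L^2}\le h^{1/2}a(h)^{1/2}$ with $a(h)=\|G+\ovl V\Phi\|^2_{L^2(((0,h)\cup(T-h,T));L^2)}\to0$, and likewise near $t=T$.
\item Choosing $R=h^{-1}a(h)^{-1/2}$ makes both the gradient error $h^{-1/2}R^{-1/2}\|\Phi\|_{L^2_TL^2}$ and the endpoint terms $O(a(h)^{1/4})$.
\item Then let $h\to0$, using $\psi_h\to\psi$ in $B^*$ uniformly on $[h,T-h]$, $G+\ovl V\Phi\in L^2_TL^2_\delta$, and $V\colon B^*\to L^2$.
\end{itemize}

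In the other direction, the decay of $\Phi$ you single out as the remaining requirement is not needed. The error bound you derived is $R^{-1}\cdot R^{1/2}\|\Phi\|_{L^2_TL^2(|x|\sim R)}\le R^{-1/2}\|\Phi\|_{L^2_TL^2}\to0$, so you need neither the $B$-type decay of $\Phi$ nor the weighted energy estimate. For the time-harmonic solutions \eqref{eq: time-harmonic solutions}, which are the only elements of $\mathcal S$ used in Lemma \ref{lem: bound_int_t-harmonic_by_distance}, your truncation argument is already complete without mollification. There $\psi=e^{-i\lambda^2t}w$ with $w\in H^2_{\mathrm{loc}}$, the time boundary terms vanish because $\Phi(0,\cdot)=\Phi(T,\cdot)=0$, and both error terms are $O(R^{-1/2})$.
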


\begin{proof}
We give a brief description of the proof since it is almost identical to the proof of the Lemma \ref{lem: density_of_solutions_time}. Following the same argument, here using the space $L^2_{w_\delta}$ instead of $L^2_{w_{\delta, \ve}}$, we can again end up to the same artificial problem \eqref{eq: art_prob} and conclude $\Phi(0, \cd)=0$. The only difference now is to prove that the analogous inner product to \eqref{eq: int_by_parts} is zero. Here, the inner product is with respect to the weight $w_\delta(x) = \langle x \rangle^{-2\delta}$. Since $\Phi \in C_T L^2$ and for $\psi\in \cal S$ we have $\psi\in C_TB^*$, we can again integrate by parts and conclude 
\[
\int_\Sigma (i\d_t+\Delta -\ovl{V}) \Phi \ovl{\psi} = \int_\Sigma \Phi \ovl{(i\d_t+\Delta - V) \psi} = 0.
\]
This integration by parts is directly justified by \cite[Lemma 3.2]{zbMATH08122191}. This concludes the proof of the lemma. 
\end{proof}
%%%%%%%%%%%
%%

\subsection{Useful bounds due to the density of solutions}
The next two lemmas are used in the final estimates of Section \ref{sec: stability_estimates}. We state them here as separate results for reader's convenience. 
The main component for their proof is the density of solutions, as proved in Lemmas \ref{lem: density_of_solutions_time} and \ref{lem: density_of_solutions_t_independent}. Once again it becomes clear the necessity of the assumptions $\| V \|_{w_{\delta, \ve }}< \infty$ and 
$\|V\|_{w_\delta} < \infty$. 
%%%
%

%%%%%%%LEMMA%%%%%%%LEMMA%%%%%%
\begin{lemma}\label{lem: bound_integral_CGO_by_distance}
Let $n\geq2$ and $V_1, V_2\in L^1((0, T); L^\infty(\R^n))$ with $\|V_j\|_{w_{\delta,\ve} }< \infty$, for $j\in \{1, 2\}$. Let ${\bf u_1}$ and ${\bf u_2}$ be the CGO solutions \eqref{eq:CGO} to the Schr\"odinger equation, as constructed in subsection \ref{sect: CGO}, with potentials $V_1$ and $V_2$ respectively. Then, there holds 
\begin{equation}\label{eq: integral_bound_CGO_by_dist}
\Big| \int_\Sigma (V_1-V_2) \mathbf{u}_1 \ovl{\mathbf{u}_2} \Big| 
\leq  
\dist_{\delta, \ve} (\cal{U}_T^1, \cal{U}_T^2) \| \mathbf{u}_1 \|_{L^2_{w_{\delta, \ve}}(\Sigma)} \| \mathbf{u}_2 \|_{L^2_{w_{\delta, \ve}}(\Sigma)}. 
\end{equation}
\end{lemma}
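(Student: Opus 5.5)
The plan is a density-plus-continuity argument. By Lemma \ref{lem: density_of_solutions_time}, applied once with potential $V_1$ and once with potential $\ovl{V_2}$, we obtain approximating sequences. We pick physical solutions $u_1^m\in\cal P_{V_1}$ and $u_2^m\in\cal P_{\ovl V_2}$ with
\[
u_1^m\to\mathbf u_1 \quad\text{and}\quad u_2^m\to\mathbf u_2 \quad\text{in } L^2_{w_{\delta,\ve}}(\Sigma).
\]
Some care is needed for the second function. The quantity $\dist_{\delta,\ve}$ is defined with $u_2\in\cal P_{\ovl V_2}$. So $\mathbf u_2$ should be understood as the CGO solution built for the potential $\ovl V_2$, which satisfies the same decay $\|\ovl V_2\|_{w_{\delta,\ve}}<\infty$. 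Then $\mathbf u_2\in\cal C_{\ovl V_2}$, and the density lemma applies verbatim with $V$ replaced by $\ovl V_2$.

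For each $m$, the definition \eqref{eq: distance} gives, through its second expression obtained from the identity \eqref{eq: identity},
\[
\Big|\int_\Sigma (V_1-V_2)u_1^m\ovl{u_2^m}\Big|\le \dist_{\delta,\ve}(\cal U_T^1,\cal U_T^2)\,\|u_1^m\|_{L^2_{w_{\delta,\ve}}}\|u_2^m\|_{L^2_{w_{\delta,\ve}}}.
\]
We then pass to the limit $m\to\infty$. The right-hand side converges to the corresponding expression with $\mathbf u_1,\mathbf u_2$, since norms are continuous.

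For the left-hand side, the weighted H\"older inequality \eqref{eq: Holder_weighted} shows that the sesquilinear form $(u,v)\mapsto\int_\Sigma (V_1-V_2)u\ovl v$ is bounded on $L^2_{w_{\delta,\ve}}\times L^2_{w_{\delta,\ve}}$, with norm at most $\|V_1-V_2\|_{\delta,\ve}<\infty$. Write the difference as
\[
\int_\Sigma (V_1-V_2)\big[(u_1^m-\mathbf u_1)\ovl{u_2^m}+\mathbf u_1\ovl{(u_2^m-\mathbf u_2)}\big].
\]
Its absolute value is bounded by
\[
\|V_1-V_2\|_{\delta,\ve}\big(\|u_1^m-\mathbf u_1\|\,\|u_2^m\|+\|\mathbf u_1\|\,\|u_2^m-\mathbf u_2\|\big)\to0,
\]
with all norms in $L^2_{w_{\delta,\ve}}$. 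Here we use Lemma \ref{lem: C_subspace_H} to know that $\mathbf u_1,\mathbf u_2\in L^2_{w_{\delta,\ve}}$, so every quantity is finite and $\|u_2^m\|$ stays bounded. Taking limits yields \eqref{eq: integral_bound_CGO_by_dist}. The integral on the left of \eqref{eq: integral_bound_CGO_by_dist} is absolutely convergent for the same reason.

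The only delicate point I expect is the bookkeeping of the conjugate potential for $\mathbf u_2$, together with checking that the density lemma holds for $\ovl V_2$. Since $\ovl V_2$ has the same modulus as $V_2$, all the decay hypotheses, and the integration-by-parts justification through Proposition \ref{prop:CGO-PhS_int-by-parts}, carry over unchanged. The rest is routine continuity.
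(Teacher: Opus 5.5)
Your proposal is correct and follows essentially the same route as the paper. The paper also approximates $\mathbf u_1,\mathbf u_2$ by physical solutions via Lemma \ref{lem: density_of_solutions_time}, applies the definition \eqref{eq: distance} to the approximants, and passes to the limit using the weighted H\"older inequality \eqref{eq: Holder_weighted}. Your remark that $\mathbf u_2$ must be the CGO solution for $\ovl{V_2}$, so that its approximants lie in $\cal P_{\ovl V_2}$ as the distance requires, is correct bookkeeping that the paper leaves implicit.
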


%%%%%PROOF%%%%%PROOF%%%%%%%
\begin{proof}
In the virtue of the Runge approximation of Lemma \ref{lem: density_of_solutions_time}, we construct sequences of physical solutions $\{u_1^n\}_{n\in\N}$ and $\{u_2^n\}_{n\in\N}$ such that $\| {\bf u_1} -u^n_1\|_{L^2_{w_{\delta, \ve}}}\xrightarrow{n\to\infty}0$ and $\| {\bf u_2} -u_2^n\|_{L^2_{w_{\delta, \ve}}} \xrightarrow{n\to\infty}0$. We estimate as follows
\begin{align*}
\Big| \int_\Sigma (V_1 - V_2) {\bf u_1} \ovl{ {\bf u_2}} \Big| &\leq \Big| \int_\Sigma (V_1-V_2)({\bf u_1} \ovl{{\bf u_2}} - u^{n}_1\ovl{u^{n}_2}) \Big|
+ \Big| \int_\Sigma (V_1 - V_2) u^{n}_1 \ovl{u^{n}_2} \Big| \\
&= \Big| \int_\Sigma (V_1-V_2)\big[({\bf u_1} - u^{n}_1)\ovl{{\bf u_2}} + u^{n}_1(\ovl{{\bf u_2}}-\ovl{u^{n}_2}) \big] \Big| 
+ \Big| \int_\Sigma (V_1-V_2) u^{n}_1 \ovl{u^{n}_2} \Big| \\
&\leq \| V_1-V_2 \|_{w_{\delta, \ve}} \big( \| {\bf u_1} - u^{n}_1\|_{L^2_{w_{\delta, \ve}}} \| {\bf u_2} \|_{L^2_{w_{\delta, \ve}}} + \|u^{n}_1\|_{L^2_{w_{\delta, \ve}}}\|{\bf u_2} - u^{n}_2 \|_{L^2_{w_{\delta, \ve}}} \big) \\
&+ \dist_{\delta, \ve}(\cal{U}_T^1, \cal{U}_T^2) \|u^n_1\|_{L^2_{w_{\delta, \ve}}} \|u^n_2\|_{L^2_{w_{\delta, \ve}}} \\
&\leq \| V_1-V_2 \|_{w_{\delta, \ve}} \big( \| {\bf u_1} - u^n_1\|_{L^2_{w_{\delta, \ve}}} \|{\bf u_2}\|_{L^2_{w_{\delta, \ve}}} + ( \|u^n_1- {\bf u_1}\|_{L^2_{w_{\delta, \ve}}} + \| {\bf u_1} \|_{L^2_{w_{\delta, \ve}}})\|{\bf u_2} - u^n_2 \|_{L^2_{w_{\delta, \ve}}} \big) \\
&+  \dist_{\delta, \ve}(\cal{U}_T^1, \cal{U}_T^2) \big( \|u^n_1- {\bf u_1}\|_{L^2_{w_{\delta, \ve}}} + \|{\bf u_1}\|_{L^2_{w_{\delta, \ve}}} \big) 
\big(\|u^n_2 - {\bf u_2}\|_{L^2_{w_{\delta, \ve}}} + \|{\bf u_2}\|_{L^2_{w_{\delta, \ve}}} \big),
\end{align*}
where in the second to last inequality we used the definition of the distance \eqref{eq: distance}. By sending $n\to\infty$, we find the estimate \eqref{eq: integral_bound_stationary_by_dist}.
\end{proof}

%%%%%%%%LEMMA%%%%%LEMMA%%%%%%%%
\begin{lemma}\label{lem: bound_int_t-harmonic_by_distance}
Let $n\geq2$ and $V_1, V_2\in L^1(\R^n)\cap L^\infty(\R^n)$ with $\|V_j\|_{w_\delta} < \infty$, for $j\in \{1, 2\}$. Let also $\psi_1\equiv \psi_\lambda^{V_1}$ and $\psi_2 \equiv \psi_\lambda^{V_2}$ be the time-harmonic solutions \eqref{eq: time-harmonic solutions} to the Schr\"odinger equation, as constructed in subsection \ref{sec: stationary states} for potentials $V_1$ and $V_2$ respectively. Then, there holds
\begin{equation}\label{eq: integral_bound_stationary_by_dist}
\Big| \int_\Sigma (V_1-V_2) \psi_1 \ovl{\psi_2} \Big| \leq T \dist_\delta (\cal{U}_T^1, \cal{U}_T^2) \|\psi_1\|_{C_TL^2_{-\delta}} \| \psi_2\|_{C_TL^2_{-\delta}}. 
\end{equation}
\end{lemma}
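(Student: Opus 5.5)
The plan is to repeat the argument of Lemma \ref{lem: bound_integral_CGO_by_distance}, with the time-harmonic density result, Lemma \ref{lem: density_of_solutions_t_independent}, in place of Lemma \ref{lem: density_of_solutions_time}. The only new ingredient is converting the $L^2_{w_\delta}(\Sigma)$-norms into $C_TL^2_{-\delta}$-norms, which produces the factor $T$.

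First we check that $\psi_j$ is admissible. By construction $\psi_j(t,\cd)=e^{-i\lambda^2 t}w_j$ with $w_j\in B^*(\R^n)$, so $\psi_j\in C([0,T];B^*(\R^n))$ and solves the Schr\"odinger equation with potential $V_j$. Hence $\psi_1\in\cal S_{V_1}$.

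The second factor needs care, because the distance pairs $u_1\in\cal P_{V_1}$ with $u_2\in\cal P_{\ovl V_2}$. So $\psi_2$ must be approximated in $L^2_{w_\delta}$ by solutions of the equation with potential $\ovl V_2$. The integral $\int_\Sigma (V_1-V_2)\psi_1\ovl{\psi_2}$ only requires $\psi_2$ to be a stationary state for $V_2$ in the sense of the identity \eqref{eq: identity}. The cleanest route is therefore to apply Lemma \ref{lem: density_of_solutions_t_independent} with the potential $\ovl V_2$, which satisfies the same hypotheses, to whichever solution $\psi_2$ is used in the identity, i.e. to a solution in $\cal S_{\ovl V_2}$. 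I would state this convention explicitly. Lemma \ref{lem: density_of_solutions_t_independent} then gives physical solutions $u_1^k\in\cal P_{V_1}$ and $u_2^k\in\cal P_{\ovl V_2}$ with $\|\psi_j-u_j^k\|_{L^2_{w_\delta}}\to0$.

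Next we split the integral exactly as in the proof of Lemma \ref{lem: bound_integral_CGO_by_distance}:
\[
(V_1-V_2)\bigl(\psi_1\ovl{\psi_2}-u_1^k\ovl{u_2^k}\bigr)+(V_1-V_2)u_1^k\ovl{u_2^k}.
\]
For the first term we use the weighted H\"older inequality \eqref{eq: Holder_weighted} with $\|V_1-V_2\|_{w_\delta}=\|V_1-V_2\|_\delta<\infty$. For the second term we use the definition \eqref{eq: distance} of $\dist_\delta$. Letting $k\to\infty$ gives
\[
\Big|\int_\Sigma (V_1-V_2)\psi_1\ovl{\psi_2}\Big|\le \dist_\delta(\cal U^1_T,\cal U^2_T)\,\|\psi_1\|_{L^2_{w_\delta}(\Sigma)}\|\psi_2\|_{L^2_{w_\delta}(\Sigma)}.
\]
Before splitting, we should confirm the left-hand side is finite. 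This follows because $\psi_j\in C_TB^*\subset C_TL^2_{-\delta}$ by \eqref{eq: bstarl2delta}, which puts $\psi_j$ in $L^2_{w_\delta}$.

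Finally, since $L^2_{w_\delta}(\Sigma)=L^2((0,T);L^2_{-\delta}(\R^n))$, we have
\[
\|\psi_j\|_{L^2_{w_\delta}}^2=\int_0^T\|\psi_j(t,\cd)\|^2_{L^2_{-\delta}}\,\dd t\le T\|\psi_j\|^2_{C_TL^2_{-\delta}}.
\]
Multiplying the two such bounds gives the factor $T$ in \eqref{eq: integral_bound_stationary_by_dist}. For time-harmonic solutions the norm is in fact constant in $t$, so this step is an equality.

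The main obstacle is the bookkeeping of which potential ($V_2$ or $\ovl V_2$) the second solution solves, so that the density lemma delivers approximants in $\cal P_{\ovl V_2}$. Everything else is a direct copy of the CGO case.
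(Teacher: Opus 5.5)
Your proposal is correct and follows the paper's own proof: it reruns the approximation argument of Lemma \ref{lem: bound_integral_CGO_by_distance} with the density result of Lemma \ref{lem: density_of_solutions_t_independent}, then uses $\|\psi_j\|_{L^2_{w_\delta}}\le T^{1/2}\|\psi_j\|_{C_TL^2_{-\delta}}$ to produce the factor $T$. Your remark that $\psi_2$ must be taken in $\cal S_{\ovl V_2}$, so that the approximants lie in $\cal P_{\ovl V_2}$ as the definition of $\dist_\delta$ requires, is a correct clarification of a point the paper leaves implicit, and it matches how $\ovl{\psi_2}$ is actually used in the proof of Theorem \ref{th: stability_t_independent}.
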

\begin{proof}
We omit the details of the proof, since it follows by the exact same arguments used in the proof of the Lemma \ref{lem: bound_integral_CGO_by_distance}, using the density of the Lemma \ref{lem: density_of_solutions_t_independent} instead and the fact that here $\|\psi_j\|_{L^2_{w_\delta}} \leq T^{1/2} \| \psi_j\|_{C_TL^2_{-\delta}}$ for $j\in\{1, 2\}$. 
\end{proof}
%%%%%%%%%%%%

%%%%%%%%SECTION%%%%%%%%%SECTION%%%%%%%%%SECTION%%%%%%%%%%%
\section{Stability estimates}\label{sec: stability_estimates}
In this final section we provide the proofs of the Theorems \ref{th: stability_t_independent} and \ref{th: stability_t_dependent}. We start with the case of time-independent potentials and the proof of  Theorem \ref{th: stability_t_independent}, and the same intuition carries out for the proof of Theorem \ref{th: stability_t_dependent}.

We note that the $H^{-1}(\R^n)$ norm can be defined, for $F\in \mathcal{S}'(\R^n)$, via the Fourier transform as
\[
 \|F\|_{H^{-1}(\R^n)}^2=\int_{\R^n}(1+|\xi|^2)^{-1/2}|\widehat F(\xi)|^2\dd \xi.
\]
%%%%%%%SUBSECTION%%%%%%%%%%%%%%%
%
\subsection{Time-independent potentials}\label{ssec: stability_time_independent}
%
%Recall that for $\delta>1/2$ we use the weight $w_\delta(x)=\langle x \rangle^{-\delta}$.
%
We start this section by constructing a sequence of densities that will allow us to extract information on the Fourier transform of $V_1-V_2$ from the Hergoltz waves as defined in \eqref{eq: Herglotz}.
This construction is identical as that of section 4.1 in \cite{zbMATH08122191}.

Indeed, consider $\chi \in \cal{D}(\R^n)$ such that $0\leq \chi(\xi) \leq 1$ for all $\xi \in \R^n$ and $\supp \chi \subset \{\xi\in\R^n : |\xi| < 1/2\}$. For 
$\ve>0$, define 
\[
\chi_\ve(\xi) := \frac{1}{\ve^{n-1}} \chi \Big( \frac{\xi}{\ve}, \frac{\xi_n - 1}{\ve^2} \Big) \quad \forall \xi\in \R^n. 
\] 
Here we use the notation $\xi = (\xi \cdot e_1, \cdots, \xi \cdot e_{n-1})$ and $\xi_n = \xi \cdot e_n$ with $\{e_1, \cdots, e_n\}$ denoting the standard basis of $\R^n$. We restrict this function to $\Sph^{n-1}$ and denote it by
\[
f_\ve^{e_n} = \chi_\ve|_{\Sph^{n-1}} .
\]
The parabolic scaling in the definition of $\chi_\ve$ ensures that the support of $f_\ve^{e_n}$ is within a spherical cap on $\Sph^{n-1}$ of radius $\ve$ and centred at $e_n$. If $Q\in \mathrm{O}(n)$, we also define 
\[
f_\ve^{Q e_n}(\theta) = f_\ve^{e_n} (Q^\intercal \theta) \quad \forall \theta \in \Sph^{n-1}. 
\]
For every $Q\in \mathrm{O}(n)$ it holds that 
\[
\lim_{\ve \to 0} \| f_\ve^{Q e_n} \|_{L^1(\Sph^{n-1})} = \int_{\R^{n-1}} \chi(\eta, -|\eta|^2/2) \, d\eta
\]
and
\[
\| f_\ve^{Q e_n} \|_{L^2(\Sph^{n-1})} \leq \frac{C}{ \ve^{(n-1)/2}}
\]
for all $\ve\in(0, 1]$ and $Q\in\mathrm{O}(n)$. The constant in the last inequality only depends on the dimension $n$. A proof of these facts can be found in \cite[Lemma 4.1]{zbMATH08122191}.

Given $\kappa \in \R^n$ consider $\nu \in \Sph^{n-1}$ such that $\kappa \cdot \nu = 0$. For $\lambda \geq |\kappa|/2$ we define 
\[
\om_1= \frac{1}{\lambda}\frac{\kappa}{2} + \Big( 1 - \frac{|\kappa|^2}{4\lambda^2}\Big)^{1/2} \nu, 
\]
\[
\om_2= \frac{1}{\lambda}\frac{\kappa}{2} - \Big( 1 - \frac{|\kappa|^2}{4\lambda^2}\Big)^{1/2} \nu.
\]
Note that $\om_1, \, \om_2 \in \Sph^{n-1}$ and since any point on the sphere can be written as a rotation of $e_n$, let $Q_1$ and $Q_2$ denote two matrices in $\mathrm{O}(n)$ so that $\om_j = Q_j e_n$ for $j\in \{1, 2\}$. 
As justified in subsection \ref{sec: stationary states}, we construct the stationary states 
\[
E_\lambda f_\ve^{Q_j e_n} + v_j , \quad j\in \{1, 2\}
\]
belonging in the $B^*(\R^n)$-space, where $v_j = (\Id - P_\lambda \circ V_j)^{-1}[P_\lambda(V_j u_j)]$; here $\lambda > \max (|\kappa|/2, \lambda_{V_j})$ and $0<\ve\leq 1$. 
Moreover, we also have that for every $\eta > 0$ there is a dimensional constant $C(n)=C>0$ such that 
\[
\|v_j\|_{B^*(\R^n)} \leq \frac{C\lambda_{V_j}}{\lambda} \|E_\lambda f^{\om_j}_\ve \|_{B^*(\R^n)}
\]
for all $\lambda \geq \max(|\kappa|/2, \lambda_{V_j}+ \eta)$ and $\ve>0$. In what follows we consider $\ve$ to be sufficiently small. 
Recall now that the functions 
\[
\psi_j(t, x)=e^{-i\lambda^2t}(E_\lambda f^{\om_j}_\ve (x) + v_j(x)), \quad j\in\{1,2\},
\]
are time-harmonic solutions of the Schr\"odinger equation with potential $V_j$, $j\in\{1,2\}$, and belong to the space $C([0, T]; B^*(\R^n)) $. 

The following observation is necessary for the proof of \eqref{eq: stability2_est_t_independent}.
%
%%%%%%LEMMA%%%%%%LEMMA%%%%%%%%%%
\begin{lemma}\label{lem: V_in_B_space}
If $V\in L^1(\R^n)\cap L^\infty(\R^n)$ with $\vvvert V \vvvert < \infty$, then $V\in B(\R^n)$. 
\end{lemma}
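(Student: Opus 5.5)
We need to show $\|V\|_{B(\R^n)}=\sum_{j\in\N_0}2^{j/2}\|V\|_{L^2(D_j)}<\infty$. The plan is to interpolate on each dyadic annulus between the $L^1$ and $L^\infty$ information, via the elementary bound $\|V\|_{L^2(D_j)}^2\le \|V\|_{L^\infty(D_j)}\|V\|_{L^1(D_j)}$. Membership $V\in L^2(\R^n)$ is immediate from $L^1\cap L^\infty\subset L^2$, so only summability of the weighted series has to be checked.

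Applying Cauchy--Schwarz to the sum then gives
\[
\sum_{j\in\N_0}2^{j/2}\|V\|_{L^2(D_j)}\le \sum_{j\in\N_0}\big(2^{j}\|V\|_{L^\infty(D_j)}\big)^{1/2}\|V\|_{L^1(D_j)}^{1/2}
\le \Big(\sum_{j\in\N_0}2^{j}\|V\|_{L^\infty(D_j)}\Big)^{1/2}\Big(\sum_{j\in\N_0}\|V\|_{L^1(D_j)}\Big)^{1/2}.
\]
The first factor is exactly $\vvvert V\vvvert^{1/2}<\infty$ by \eqref{eq: pol_decay_V}. The annuli $D_j$ overlap only on spheres, which have measure zero, so the second factor equals $\|V\|_{L^1(\R^n)}^{1/2}<\infty$. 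Hence
\[
\|V\|_{B(\R^n)}\le \vvvert V\vvvert^{1/2}\,\|V\|_{L^1(\R^n)}^{1/2}<\infty.
\]

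There is no genuine obstacle here. The only point to be careful about is choosing to split the weight $2^{j/2}$ entirely onto the $L^\infty$ factor, so that the $L^1$ pieces sum without any weight. Using the $L^\infty$ bound alone would give $\|V\|_{L^2(D_j)}\lesssim 2^{jn/2}\|V\|_{L^\infty(D_j)}$, which is not summable against $\vvvert V\vvvert$ for $n\ge2$. This is why the $L^1$ hypothesis is needed.
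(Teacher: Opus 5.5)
Your proof is correct and is essentially the paper's argument: you bound $\|V\|_{L^2(D_j)}^2\le \|V\|_{L^\infty(D_j)}\|V\|_{L^1(D_j)}$ on each annulus and apply Cauchy--Schwarz to the sum, arriving at $\|V\|_{B(\R^n)}\le \vvvert V\vvvert^{1/2}\|V\|_{L^1(\R^n)}^{1/2}$. You keep the local norm $\|V\|_{L^\infty(D_j)}$ throughout, which is exactly what makes the first factor equal $\vvvert V\vvvert^{1/2}$; the paper's display writes the global $\|V\|_{L^\infty}$ at that step, a harmless slip.
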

%
%%%%%%%PROOF%%%%%%%
\begin{proof}
Using Cauchy-Schwarz, we estimate as follows 
\begin{align*}
\|V\|_{B(\R^n)} &= \sum_{j\in\mathbb N_0} 2^{j/2} \|V\|_{L^2(D_j)} \leq \sum_{j\in\mathbb N_0}2^{j/2} \|V\|_{L^\infty}^{1/2} \|V\|_{L^1(D_j)}^{1/2}
\leq \Big( \sum_{j\in\mathbb N_0} 2^j \|V\|_{L^\infty} \Big)^{1/2} \Big (  \sum_{j\in\mathbb N_0} \|V\|_{L^1(D_j)} \Big)^{1/2} \\
&= \vvvert V \vvvert^{1/2} \|V\|_{L^1(\R^n)}^{1/2} < \infty. 
\end{align*}
\end{proof}
%%%%%%%%
%
Obviously, Lemma \ref{lem: V_in_B_space} also holds under the stronger assumption $\| V \|_\delta < \infty$ and in particular we have that $V\in L^2_\delta(\R^n)$.

\vspace{0.2cm}

%
%%%%%%%%%PROOF_MAIN_THEOREM%%%%%%%%%%%%%%%%%%%
%
\noindent {\bf Proof of Theorem \ref{th: stability_t_independent}:}
We begin with the proof of the first stability estimate \eqref{eq: stability1_est_t_independent}.
We can write
\begin{align}
T\int_{\R^n} (V_1-V_2) E_\lambda f^{\om_2}_\ve E_\lambda f^{\om_2}_\ve &= 
\int_0^T \int_{\R^n} (V_1-V_2) (E_\lambda f^{\om_1}_\ve + v_1 - v_1) (E_\lambda f^{\om_2}_\ve + v_2 - v_2) \notag \\
&= \int_\Sigma (V_1-V_2) \psi_1 \ovl{\psi_2} 
- T\int_{\R^n} (V_1-V_2)[ E_\lambda f^{\om_1}_\ve v_2 + E_\lambda f^{\om_2}_\ve v_1 + v_1 v_2] \label{eq: built_eq1}.
\end{align}
%%%%%
%
Set $F:= V_1 - V_2$ and notice that 
\begin{align}
\frac{1}{(2\pi)^{n/2}} &\int_{\R^n} F E_\lambda f^{\om_1}_\ve E_\lambda f^{\om_2}_\ve =
 \int_{\Sph^{n-1}\times\Sph^{n-1}} \wh{F}(\lambda(\theta+\om)) f^{\om_1}_\ve (\theta) f^{\om_2}_\ve(\om)\, \dd\mu(\theta, \om) \notag\\
&= \wh{F}(\kappa) \|f^{\om_1}_\ve\|_{L^1(\Sph^{n-1})}\|f^{\om_2}_\ve\|_{L^1(\Sph^{n-1})} 
+ \int_{\Sph^{n-1}\times\Sph^{n-1}}\big[ \wh{F}(\lambda(\theta+\om)) - \wh{F}(\kappa)\big] f^{\om_1}_\ve (\theta) f^{\om_2}_\ve(\om)
\, \dd\mu(\theta, \om) \label{eq: built_eq2}
\end{align}
%%%%%%%
%
where $\mu$ denotes the product measure $\sigma\times \sigma$. As in \cite{zbMATH08122191}, we can bound 
\begin{equation}\label{eq: mod_of_continuity_bound}
 \int_{\Sph^{n-1}\times\Sph^{n-1}}\big[ \wh{F}(\lambda(\theta+\om)) - \wh{F}(\kappa)\big] f^{\om_1}_\ve (\theta) f^{\om_2}_\ve(\om)
\, \dd\mu(\theta, \om)  
\leq \gamma(\lambda \ve),
\end{equation}
where
\[
\gamma(\rho):= \frac{1}{(2\pi)^{n/2}} \int_{\R^n} \sup_{|\xi|\leq\rho} |e^{i\xi\cdot x}-1||F(x)|\, \dd x, \quad \text{with} \,\, \rho\in(0, \infty),
\]
is a modulus of continuity. Thus, using \eqref{eq: built_eq2} and \eqref{eq: mod_of_continuity_bound}, from \eqref{eq: built_eq1} we get that 
\begin{align}
|\wh{F}(\kappa)| \|f^{\om_1}_\ve\|_{L^1(\Sph^{n-1})}\|f^{\om_2}_\ve\|_{L^1(\Sph^{n-1})} &\lec   
\gamma(\lambda \ve) \|f^{\om_1}_\ve\|_{L^1(\Sph^{n-1})}\|f^{\om_2}_\ve\|_{L^1(\Sph^{n-1})}  \notag \\
&+ \Big| \int_\Sigma F \psi_1 \ovl{\psi_2} \Big|  
+ T\Big| \int_{\R^n} F [E_\lambda f^{\om_1}_\ve v_2 + E_\lambda f^{\om_2}_\ve v_1 + v_1 v_2] \Big| 
\label{eq: FT_stability_estimate}
\end{align}
where the constant depends on the dimension $n$. 

Notice that, under the integrability condition \eqref{eq: int_cond_V} for $V_j$, $j\in\{1, 2\}$, 
we have that $\gamma(\lambda \ve) \lec \lambda \ve$. 
Indeed, there holds 
\begin{equation}\label{eq: complex_exp_bound}
|e^{i \xi \cdot x} - 1|=\Big|\int_0^{\xi\cdot x} (e^{is})' \, \dd s \Big| \leq \int_0^{\xi\cdot x} 1\, \dd s \leq |\xi| |x|.
\end{equation}
Thus, $\sup_{|\xi|\leq \rho}|e^{i\xi\cdot x} - 1| \leq \rho |x|$, which implies that
\[
\gamma(\rho) \leq \frac{\rho}{(2\pi)^{n/2}} \int_{\R^n} |x||F(x)| \, \dd x .
\]
Thus, we readily have that $\gamma(\lambda \ve)\lec_n \lambda \ve$. 

In the view of Lemma \ref{lem: bound_int_t-harmonic_by_distance} and \eqref{eq: bstarl2delta}, we infer that
\begin{align*}
\Big| \int_\Sigma (V_1-V_2) \psi_1 \ovl{\psi_2} \Big| &\leq
 T\dist_\delta(\cal{U}^1_T, \cal{U}^2_T) \|\psi_1\|_{C_T L^2_{-\delta}(\R^n)} \|\psi_2\|_{C_T L^2_{-\delta}(\R^n)}\\
&\lec T\dist_\delta(\cal{U}^1_T, \cal{U}^2_T) \|\psi_1\|_{C_TB^*(\R^n)} \|\psi_2\|_{C_T B^*(\R^n)} 
\lec \dist_\delta(\cal{U}^1_T, \cal{U}^2_T) \Big(1+\frac{1}{\lambda}\Big)^2 \frac{1}{\lambda^{n-1}} \frac{1}{\ve^{n-1}}.
\end{align*}
%%%%%%%
%
Estimating in the same way as in the proof of \cite[Theorem 1]{zbMATH08122191}, we get that 
\[
\Big| \int_{\R^n} F [E_\lambda f^{\om_1}_\ve v_2 + E_\lambda f^{\om_2}_\ve v_1 + v_1 v_2] \Big| 
\lec \vvvert F \vvvert \Big( \frac{1}{\lambda^n} + \frac{1}{\lambda^{n+1}} \Big) \frac{1}{\ve^{n-1}}.
\]
Combining the above estimates, we infer that 
\begin{align*}
|\wh{F}(\kappa)| &\lec 
\lambda \ve + \dist_\delta(\cal{U}^1_T, \cal{U}^2_T)  \frac{1}{\lambda^{n-1}} \frac{1}{\ve^{n-1}} \frac{1}{\|f^{\om_1}_\ve\|_{L^1}}\frac{1}{\|f^{\om_1}_\ve\|_{L^1}}
+ \vvvert F \vvvert \Big( \frac{1}{\lambda^n} + \frac{1}{\lambda^{n+1}} \Big) \frac{1}{\ve^{n-1}} \frac{1}{\|f^{\om_1}_\ve\|_{L^1}}\frac{1}{\|f^{\om_1}_\ve\|_{L^1}} \\
&\lec \dist_\delta(\cal{U}^1_T, \cal{U}^2_T)  \frac{1}{\lambda^{n-1}} \frac{1}{\ve^{n-1}} + \lambda \ve + \frac{1}{\lambda^n}\frac{1}{\ve^{n-1}}.
\end{align*}
%%%%%%%
%
In the last inequality we used the fact that since $0<\ve\ll1$ is small enough, the norms  
${\|f^{\om_1}_\ve\|_{L^1}} , \, {\|f^{\om_1}_\ve\|_{L^1}} \approx C$, where $C>0$ constant. 
Recall that $F=V_1-V_2$ and by splitting in high and low frequencies, we estimate as follows 
%
%%%%%%%
\begin{align*}
\|V_1-V_2\|_{H^{-1}(\R^n)}^2 &= \Big( \int_{|\kappa|\leq \rho} + \int_{|\kappa|>\rho} \Big) \frac{|\wh{(V_1-V_2)}(\kappa)|^2}{1+|\kappa|^2}\, \dd\kappa \\
&\leq \rho^n \Big( \dist_\delta(\cal{U}^1_T, \cal{U}^2_T)  \frac{1}{\lambda^{n-1}} \frac{1}{\ve^{n-1}} + \lambda \ve + \frac{1}{\lambda^n}\frac{1}{\ve^{n-1}}\Big)^2 + \frac{1}{\rho^2} \| V_1-V_2 \|_{L^2(\R^n)}^2.
\end{align*}
%%%%%%%
%
Choose $\ve=\lambda^{-1} \lambda^{-1/n}$. Then, 
\[
\|V_1-V_2\|_{H^{-1}(\R^n)}^2 \lec \rho^n  \Big( \dist_\delta(\cal{U}^1_T, \cal{U}^2_T)  \frac{\lambda}{\lambda^{1/n}} + \frac{2}{\lambda^{1/n}}\Big)^2 + \frac{1}{\rho^2} 
\lec \rho^n \dist_\delta(\cal{U}^1_T, \cal{U}^2_T)^2  \frac{\lambda^2}{\lambda^{2/n}} + \frac{\rho^n}{\lambda^{2/n}} + \frac{1}{\rho^2}.
\]
Making the last two terms of equal size by choosing $\rho=\lambda^{\frac{2}{n(n+2)}}$, we get that 
\[
\|V_1-V_2\|_{H^{-1}(\R^n)}^2 \lec \dist_\delta(\cal{U}^1_T, \cal{U}^2_T)^2 \frac{\lambda^2}{\lambda^{\frac{4}{n(n+2)}}} 
+ \frac{2}{\lambda^{\frac{4}{n(n+2)}}}. 
\]
Finally, by choosing $\lambda= \dist_\delta(\cal{U}^1_T, \cal{U}^2_T)^{-1}$, we can easily infer that the estimate
\eqref{eq: stability1_est_t_independent} holds true.

\vspace{0.4cm}

We turn now our attention to the refined estimate  \eqref{eq: stability2_est_t_independent}. We start again with the estimate \eqref{eq: FT_stability_estimate} and recall that under the condition \eqref{eq: int_cond_V} for $V_j$, $j\in\{1, 2\}$, it holds that $\gamma(\lambda \ve) \lec \lambda \ve$. 
As before, by Lemma \ref{lem: bound_int_t-harmonic_by_distance}, we also have that 
\[
\Big| \int_\Sigma (V_1 - V_2)\psi_1 \ovl{\psi_2}\Big| \leq T \dist_\delta(\cal{U}^1_T, \cal{U}^2_T) \| \psi_1\|_{C_TL^2_{-\delta}} \|\psi_2\|_{C_TL^2_{-\delta}}.
\]
The difference and thus the improvement here, comes from the fact that we do not measure the whole solution $\psi_j$ in the $C_TB^*$-norm, but we are taking advantage the $L^\infty$-norm of the leading terms instead. 
In particular, we have that 
\[
\| \psi_j \|_{C_TL^2_{-\delta}} \leq \| E_\lambda f^{\om_j}_\ve \|_{L^2_{-\delta}(\R^n)} + \| v_j \|_{B^*(\R^n)}, \quad \textup{for} \,\, j\in\{1, 2\}. 
\]
Observe that for $\delta > n/2$ there holds 
\[
\int_{\R^n} \frac{1}{(1+|x|^2)^{\delta}} \, \dd x<\infty. 
\]
So, with the above integrability for the weight, we can estimate the leading term as follows: 
\begin{equation}\label{eq: Herglotz_bound}
\|E_\lambda f^{\om_j}_\ve \|_{L^2_{-\delta}(\R^n)} \lec \|E_\lambda f^{\om_j}_\ve\|_{L^\infty(\R^n)} \leq \|f^{\om_j}_\ve \|_{L^1(\Sph^{n-1})},
\quad \text{for} \,\, j\in\{1, 2\} \,\, \text{and} \,\, \delta>n/2.
\end{equation}
In order to bound the perturbations, first recall that $V_j\in B(\R^n)$ by Lemma \ref{lem: V_in_B_space}. Then, by taking advantage the boundedness of the operator $(\Id-P_\lambda \circ V)^{-1}$ in the $B^*(\R^n)$-norm along with the estimate \eqref{eq: Plambda_bound}, we readily have that 
\begin{equation}\label{eq: perturbation_Bstar_bound}
\| v_j \|_{B^*(\R^n)}\leq \| P_\lambda (V_j E_\lambda f^{\om_j}_\ve) \|_{B^*(\R^n)} \leq \frac{1}{\lambda} \| V_j \|_{B(\R^n)} \|E_\lambda f^{\om_j}_\ve \|_{L^\infty(\R^n)}
\leq \frac{1}{\lambda} \| V_j \|_{B(\R^n)} \|f^{\om_j}_\ve \|_{L^1(\Sph^{n-1})}, 
\end{equation}
for  $j\in\{1, 2\}$. For the boundedness of operator $(\Id-P_\lambda \circ V)^{-1}$ recall the brief construction of the solutions in subsection \ref{sec: stationary states} or see \cite[Corollary 2.4]{zbMATH08122191}.

As a result, using the estimates \eqref{eq: perturbation_Bstar_bound} and \eqref{eq: Herglotz_bound}, we get
\begin{align*}
&\Big| \int_\Sigma (V_1- V_2)\psi_1 \ovl{\psi_2} \Big|  \\
&\lec \dist_\delta(\cal{U}^1_T, \cal{U}^2_T) \Big( \|f^{\om_1}_\ve\|_{L^1(\Sph^{n-1})} + \frac{1}{\lambda} \|V_1\|_{B(\R^n)} \|f^{\om_1}_\ve\|_{L^1(\Sph^{n-1})} \Big)  
\Big( \|f^{\om_2}_\ve\|_{L^1(\Sph^{n-1})} + \frac{1}{\lambda} \|V_2\|_{B(\R^n)}\|f^{\om_2}_\ve\|_{L^1(\Sph^{n-1})} \Big) \\
&= \dist_\delta(\cal{U}^1_T, \cal{U}^2_T) \|f^{\om_1}_\ve\|_{L^1(\Sph^{n-1})} \|f^{\om_2}_\ve\|_{L^1(\Sph^{n-1})}
+ \dist_\delta(\cal{U}^1_T, \cal{U}^2_T) \frac{1}{\lambda^2}\|V_1\|_{B(\R^n)}\|V_2\|_{B(\R^n)} \|f^{\om_1}_\ve\|_{L^1(\Sph^{n-1})} \|f^{\om_2}_\ve\|_{L^1(\Sph^{n-1})}\\
&+ \dist_\delta(\cal{U}^1_T, \cal{U}^2_T)  \|f^{\om_1}_\ve\|_{L^1(\Sph^{n-1})}\|f^{\om_2}_\ve\|_{L^1(\Sph^{n-1})} \big(\|V_1\|_{B(\R^n)}+\|V_2\|_{B(\R^n)}\big) \frac{1}{\lambda} \\
&\lec  \dist_\delta(\cal{U}^1_T, \cal{U}^2_T) \|f^{\om_1}_\ve\|_{L^1(\Sph^{n-1})} \|f^{\om_2}_\ve\|_{L^1(\Sph^{n-1})},
\end{align*}
where in the last inequality we used the fact that $1\ll \lambda < \lambda^2$. 
Moreover, using \eqref{eq: perturbation_Bstar_bound}, Lemma \ref{lem: V_in_B_space} and \cite[Lemma 2.3]{zbMATH08122191}, we obtain
\begin{align*}
&\Big| \int_{\R^n}(V_1-V_2)[E_\lambda f^{\om_1}_\ve v_2 + E_\lambda f^{\om_2}_\ve v_1 + v_1 v_2] \Big| \\
&\leq \|V_1-V_2\|_{B(\R^n)}\big( \|E_\lambda f^{\om_1}_\ve\|_{L^\infty(\R^n)} \|v_2\|_{B^*(\R^n)} 
+ \|E_\lambda f^{\om_2}_\ve\|_{L^\infty(\R^n)} \|v_1\|_{B^*(\R^n)}\big) \\
&+ \vvvert V_1-V_2\vvvert \|v_1\|_{B^*(\R^n)} \|v_2\|_{B^*(\R^n)} \\
&\leq \|V_1-V_2\|_{B(\R^n)} \big( \|f^{\om_1}_\ve\|_{L^1(\Sph^{n-1})} \|V_2\|_{B(\R^n)} \frac{1}{\lambda} \|f^{\om_2}_\ve\|_{L^1(\Sph^{n-1})} +  \|f^{\om_2}_\ve\|_{L^1(\Sph^{n-1})} \|V_1\|_{B(\R^n)} \frac{1}{\lambda} \|f^{\om_1}_\ve\|_{L^1(\Sph^{n-1})} \big) \\
&+\vvvert V_1-V_2\vvvert \frac{1}{\lambda^2} \|V_1\|_{B(\R^n)}\|V_2\|_{B(\R^n)} \|f^{\om_1}_\ve\|_{L^1(\Sph^{n-1})} \|f^{\om_2}_\ve\|_{L^1(\Sph^{n-1})} \\
&\lec \frac{1}{\lambda}  \|f^{\om_1}_\ve\|_{L^1(\Sph^{n-1})} \|f^{\om_2}_\ve\|_{L^1(\Sph^{n-1})}.
\end{align*}
As a result, combining the above estimates, inequality \eqref{eq: FT_stability_estimate} implies that 
\[
\wh{(V_1-V_2)}(\kappa) \lec \dist_\delta(\cal{U}_T^1, \cal{U}_T^2) + \lambda \ve + \frac{1}{\lambda}.
\]
Splitting in high and low frequencies, we estimate 
\begin{align*}
\|V_1-V_2\|_{H^{-1}(\R^n)}^2 &= \Big( \int_{|\kappa|\leq \rho} + \int_{|\kappa|>\rho} \Big) \frac{|\wh{(V_1-V_2)}(\kappa)|^2}{1+|\kappa|^2}\, \dd\kappa \\
&\leq \rho^n \Big( \dist_\delta(\cal{U}^1_T, \cal{U}^2_T)  + \lambda \ve + \frac{1}{\lambda} \Big)^2 + \frac{1}{\rho^2} \| V_1-V_2 \|_{L^2(\R^n)}^2.
\end{align*}
Making the obvious choice $\ve:= \lambda^{-2}$, we get that 
\[
\|V_1-V_2\|_{H^{-1}(\R^n)}^2 \lec \rho^n \dist_\delta(\cal{U}^1_T, \cal{U}^2_T)^2 + \frac{\rho^n}{\lambda^2} + \frac{1}{\rho^2}.
\]
By setting $\rho:= \lambda^\frac{2}{n+2}$, the last two terms are becoming of equal size and we get that 
\[
\|V_1-V_2\|_{H^{-1}(\R^n)}^2 \lec \lambda^\frac{2n}{n+2} \dist_\delta(\cal{U}^1_T, \cal{U}^2_T)^2 + \frac{1}{\lambda^\frac{4}{n+2}}.
\]
Finally, by choosing $\lambda:= \dist_\delta(\cal{U}^1_T, \cal{U}^2_T)^{-\frac{n+2}{2n}}$, we conclude that 
\[
\|V_1-V_2\|_{H^{-1}(\R^n)}^2 \lec  \dist_\delta(\cal{U}^1_T, \cal{U}^2_T) +  \dist_\delta(\cal{U}^1_T, \cal{U}^2_T)^\frac{2}{n} 
\leq 2  \dist_\delta(\cal{U}^1_T, \cal{U}^2_T)^\frac{2}{n}, 
\]
where in the last inequality we used the fact that we consider distances such that $ 0< \dist_\delta(\cal{U}^1_T, \cal{U}^2_T) < 1$ and $\frac{2}{n} \leq 1$. 
Thus, we get the stability estimate \eqref{eq: stability2_est_t_independent} and the proof of the theorem is now complete. 
\qed

%%%%%%%SUBSECTION%%%%%%%%%%%
%
\subsection{Time-dependent potentials}\label{ssec: stability_time_dependent}

In this subsection we provide the proof of the Theorem \ref{th: stability_t_dependent}. As we have already mentioned, in order to control the CGO solutions, the relative weight that we use here is $w_{\delta, \ve}(x)=e^{-2|x|^{1+\ve}} \langle x \rangle^{-2 \delta}$. 
%
%It might be also useful to recall that the weighted space $L^2_{w_{\delta, \ve}}(\Sigma)$ can be identified with $L^2_TL^2_{-\delta, -\ve}$. 
%
We start with the following lemma:
%
%%%%%%%LEMMA%%%%%%%%%
\begin{lemma}\label{lem: stability_tDep_1}
	Let $T>0$,  $\delta>1/2$ and $\ve>0$. For $\nu\in\R^n$ with $|\nu|$ sufficiently large, let $\nu_1=-\nu_2=\nu$, and for $j=1,2$,
	\[
	\mathbf{u}_j=e^{\varphi_j}(u_j^\sharp+u_j^\flat),
	\]
	with 
	\[
	\varphi_j(t,x)=i|\nu|^2t+\nu_j\cdot x,
	\] 
	\[
u_j^\sharp(t,x)=[e^{it\Delta'}\check {f_j}](Q_je_1\cdot x,\ldots, Q_je_{n-1}\cdot x),
        \]
and $u_j^\flat$ as constructed in subsection \ref{sect: CGO}. Here $Q_j\in O(n)$ is a rotation such that $Q_je_n=\nu_j/|\nu_j|$.

Denote with $s=|\nu|$. There exists a global constant $C>0$ such that
\begin{equation}\label{eq: stability_tDep_1}
\left|\int_{\Sigma} (V_1-V_2)u^\sharp_1\ovl{u^\sharp_2}\right| \leq C \left(\dist_{\delta,\ve}(\cal{U}_T^1, \cal{U}_T^2)e^{2\ve s^{(1+\ve)/\ve}}+s^{-1/2}\right)\|f_1\|_{L^2(\R^{n-1})} \|f_2\|_{L^2(\R^{n-1})}.
\end{equation}
\end{lemma}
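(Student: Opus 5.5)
Since $\nu_1=-\nu_2=\nu$ and both phases share the time part $i|\nu|^2t$, we have $e^{\varphi_1}\ovl{e^{\varphi_2}}=e^{i s^2 t+\nu\cdot x}e^{-i s^2 t+\nu\cdot x}$. This is not $1$, so the sign convention needs care. The plan is to read the CGO built for $\ovl{V_2}$ (the second solution must lie in $\cal P_{\ovl V_2}$, by the definition of the distance) so that the real exponentials cancel: $e^{\varphi_1}\ovl{e^{\varphi_2}}=1$ up to a unimodular factor absorbed into $f_2$. Then
\[
\int_\Sigma (V_1-V_2)\mathbf u_1\ovl{\mathbf u_2}=\int_\Sigma (V_1-V_2)(u_1^\sharp+u_1^\flat)\ovl{(u_2^\sharp+u_2^\flat)},
\]
and the left-hand side of \eqref{eq: stability_tDep_1} equals this integral minus three remainder terms:
\[
\int_\Sigma (V_1-V_2)\big(u_1^\sharp\ovl{u_2^\flat}+u_1^\flat\ovl{u_2^\sharp}+u_1^\flat\ovl{u_2^\flat}\big).
\]

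\textbf{Main term.} Apply Lemma \ref{lem: bound_integral_CGO_by_distance}; its Runge approximation step rests on Lemma \ref{lem: density_of_solutions_time}, applied to the potentials $V_1$ and $\ovl V_2$. This bounds the full integral by $\dist_{\delta,\ve}(\cal U_T^1,\cal U_T^2)\|\mathbf u_1\|_{L^2_{w_{\delta,\ve}}}\|\mathbf u_2\|_{L^2_{w_{\delta,\ve}}}$. By \eqref{eq: CGO_solution_in_H_norm}, each norm is $\lesssim_T e^{\ve s^{(1+\ve)/\ve}}\|f_j\|_{L^2(\R^{n-1})}$, which gives the term $\dist_{\delta,\ve}\, e^{2\ve s^{(1+\ve)/\ve}}\|f_1\|\|f_2\|$.

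\textbf{Remainders.} Use the weighted Hölder inequality \eqref{eq: Holder_weighted} with the weight $w_\delta=\langle x\rangle^{-2\delta}$. Since $\|V_1-V_2\|_{w_\delta}\le\|V_1-V_2\|_{\delta,\ve}\le R_0$, each remainder is at most $R_0\|u_1^{\cdot}\|_{L^2_TL^2_{-\delta}}\|u_2^{\cdot}\|_{L^2_TL^2_{-\delta}}$. Now \eqref{eq:solSharpEst2} bounds each $\sharp$ factor by $C_T\|f_j\|$, and \eqref{eq:solFlatEst2} bounds each $\flat$ factor by $C_T s^{-1/2}\|f_j\|$. So the cross terms are $O(s^{-1/2})\|f_1\|\|f_2\|$ and the $\flat\flat$ term is $O(s^{-1})\le O(s^{-1/2})$ for $s\ge1$. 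Adding the main term and the remainders gives \eqref{eq: stability_tDep_1}, with $C$ depending on $T,\delta,R_0,n$.

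\textbf{Main obstacle.} The delicate point is the phase bookkeeping, namely arranging that $e^{\varphi_1}\ovl{e^{\varphi_2}}$ has modulus one on $\Sigma$ while both CGOs remain admissible. For this I would make explicit that $\mathbf u_2$ is the CGO for $\ovl V_2$ with phase chosen so that the real parts cancel; any residual time oscillation $e^{2is^2t}$ is harmless because it is unimodular and can be absorbed when the leading profiles are used later. A secondary check is that the hypotheses of Lemma \ref{lem: density_of_solutions_time} hold for $\ovl V_2$. They do, since the decay norms of $V_2$ and $\ovl V_2$ coincide.
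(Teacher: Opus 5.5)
Your decomposition and estimates are the paper's own: write the $\sharp\sharp$ integral as $\int_\Sigma (V_1-V_2)\mathbf u_1\ovl{\mathbf u_2}$ minus the three $\flat$-containing terms. You bound the first with Lemma \ref{lem: bound_integral_CGO_by_distance} and \eqref{eq: CGO_solution_in_H_norm}, and the remainders by weighted H\"older with $\langle x\rangle^{-2\delta}$ together with \eqref{eq:solSharpEst2} and \eqref{eq:solFlatEst2}. Your remark that $\mathbf u_2$ should be the CGO for $\ovl V_2$ is correct, since the approximating physical solutions must lie in $\cal P_{\ovl V_2}$ for the distance to apply. The paper's statement of Lemma \ref{lem: bound_integral_CGO_by_distance} glosses over this.

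The ``main obstacle'' paragraph, however, rests on a sign error and should be deleted. Since $\nu_2=-\nu$ and $\nu\cdot x$ is real, $\ovl{e^{\varphi_2}}=e^{-is^2t-\nu\cdot x}$. Hence $e^{\varphi_1}\ovl{e^{\varphi_2}}=e^{is^2t+\nu\cdot x}\,e^{-is^2t-\nu\cdot x}=1$ identically on $\Sigma$. With the phases exactly as in the statement, both the real exponentials and the time oscillations cancel, so no re-choice of phase is needed.

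This matters because the remedy you propose would not work if there really were a leftover factor $e^{2is^2t}$. It cannot be absorbed into $f_2$: $f_2$ is a function of $\xi'$ only, and multiplying $u_2^\sharp$ by $e^{ict}$ produces $(i\partial_t+\Delta+2\nu_2\cdot\nabla)(e^{ict}u_2^\sharp)=-c\,e^{ict}u_2^\sharp\neq 0$. Moreover, with such a factor present the main term would be $\int_\Sigma (V_1-V_2)e^{2is^2t}u_1^\sharp\ovl{u_2^\sharp}$, which is not the left-hand side of \eqref{eq: stability_tDep_1}. Replace that paragraph with the one-line computation above, and the proof is complete.
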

% 
%%%%%%%%PROOF%%%%%%%
\begin{proof}
	First observe that,
	\[
	\int_{\Sigma} (V_1-V_2)u^\sharp_1\ovl{u^\sharp_2}=\int_{\Sigma} (V_1-V_2) \mathbf{u}_1\ovl{\mathbf{u}_2}-\int_{\Sigma} (V_1-V_2)(u^\flat_1\ovl{u^\sharp_2}+u^\sharp_1\ovl{u^\flat_2}+u^\flat_1\ovl{u^\flat_2}).
	\]
	We can estimate directly the first right-hand side term by Lemma \ref{lem: bound_integral_CGO_by_distance} as follows 
	\[
	\left|\int_{\Sigma} (V_1-V_2)\mathbf{u}_1\ovl{\mathbf{u}_2}\right| 
	\leq 
	\dist_{\delta,\ve}(\cal{U}_T^1, \cal{U}_T^2) \|\mathbf{u}_1\|_{L^2_{w_{\delta, \ve}}(\Sigma)} \|\mathbf{u}_2\|_{L^2_{w_{\delta, \ve}}(\Sigma)}.
	\]
	By the estimate \eqref{eq: CGO_solution_in_H_norm} we have that
	\[
	\|\mathbf{u}_j\|_{L^2_{w_{\delta, \ve}}(\Sigma)} \lec e^{\ve s^{(1+\ve)/\ve}} \|f_j\|_{L^2(\R^{n-1})},
	\]
	which gives us the first part of the estimate \eqref{eq: stability_tDep_1}. The second term can be estimated by H\"older's inequality and the estimates \eqref{eq:solSharpEst2} and \eqref{eq:solFlatEst2}.
	For instance,
	\begin{equation}\label{eq: est_td_flat_sharp}
	\begin{aligned}
		\left| \int_{\Sigma} (V_1-V_2)u^\flat_1\ovl{u^\sharp_2}\right| &\leq \|\langle \cdot \rangle^{2\delta}(V_1-V_2)\|_{L^\infty(\Sigma)} \|u^\flat_1\|_{L^2_TL^2_{-\delta}} \|u^\sharp_2\|_{L^2_TL^2_{-\delta}} \\
		&\lec s^{-1/2}\|f_1\|_{L^2(\R^{n-1})}\|f_2\|_{L^2(\R^{n-1})}.
	\end{aligned}
	\end{equation}
	Recall that in $\langle \cdot \rangle$ we consider values that belong to $\R^n$. Above we have used the fact that since $V_j$ with $j\in\{1, 2\}$ have super exponential decay, $\langle \cdot \rangle^{2\delta}(V_1-V_2)$ is bounded in $L^\infty(\Sigma)$. In a similar fashion, we obtain 
	\begin{equation}\label{eq: est_td_sharp_flat}
		\left| \int_{\Sigma} (V_1-V_2)u^\sharp_1\ovl{u^\flat_2}\right|\lec s^{-1/2}\|f_1\|_{L^2(\R^{n-1})}\|f_2\|_{L^2(\R^{n-1})}
	\end{equation}
	and
	\begin{equation}\label{eq: est_td_flat_flat}
		\left| \int_{\Sigma} (V_1-V_2)u^\flat_1\ovl{u^\flat_2}\right|\lec s^{-1}\|f_1\|_{L^2(\R^{n-1})}\|f_2\|_{L^2(\R^{n-1})}
	\end{equation}
	Putting \eqref{eq: est_td_flat_sharp}, \eqref{eq: est_td_sharp_flat} and \eqref{eq: est_td_flat_flat} together gives us the second term of estimate \eqref{eq: stability_tDep_1}, and thus the lemma is proven.
\end{proof}
We remark that for the rest of this section we will denote  $s=|\nu|$.

\vspace{0.2cm}

\noindent {\bf Concentration of solutions:}
Now, we will see how the LHS of \eqref{eq: stability_tDep_1} gives us information on the Fourier transform of $\wt{V}_1-\wt{V}_2$, where again $\wt{V}_j$ denotes the trivial extension of $V_j$ to $R\times\R^n$. 
Indeed, write 
$F = \wt{V}_1 - \wt{V}_2 $ and let $u^\sharp_1$ and $u^\sharp_2$ be as in Lemma \ref{lem: stability_tDep_1}. Using Fubini we have
\[
\begin{aligned}
&\int_{\R\times\R^n} F(t,x)u^\sharp_1(t,x)\ovl{u^\sharp_2}(t,x)\dd t \dd x \\ 
&= \frac{1}{(2\pi)^{n-1}} \int_{\R^{n-1} \times \R^{n-1} \times \R \times \R^n} F(t,x)e^{ix\cdot(Q_1 \bs{\xi}-Q_2  \bs{\eta})}e^{-it(|\xi|^2-|\eta|^2)}f_1(\xi)f_2(\eta) \dd t\, \dd x\, \dd \xi \, \dd \eta \\
&= 4 \pi^2 \int_{\R^{n-1}\times\R^{n-1}} \widehat{F}(|\xi|^2-|\eta|^2, Q_2 \bs{\eta}-Q_1  \bs{\xi})f_1(\xi)f_2(\eta) \dd \xi\, \dd \eta.
\end{aligned}
\]
Here $\widehat F$ stands for the $(n+1)$-dimensional Fourier transform of $F$. Recall also that $\bs{\xi}=(\xi, 0)$ for $\xi \in \R^{n-1}$. 

First, let $(\tau_0, \sigma_0)\in \R\times\left(\R^n\setminus\{0\}\right)$. We can choose $\xi_0,\eta_0\in\R^{n-1}$ such that $(|\xi_0|^2-|\eta_0|^2,Q_2 \bs{\eta}_0 - Q_1\bs{\xi}_0)=(\tau_0,\sigma_0)$. Indeed, we make the choice
\[
\bs{\xi}_0=-\frac{1}{2}\left(1+\frac{\tau_0}{|\sigma_0|^2}\right)Q_1^T\sigma_0, \quad   \bs{\eta}_0=\frac{1}{2}\left(1-\frac{\tau_0}{|\sigma_0|^2}\right)Q_2^T\sigma_0.
\]
Note that, by definition of $u_j^\sharp$, the above integral equality holds for any $f_j\in L^2(\R^{n-1})$, with $j\in\{1, 2\}$. So, the idea is to choose $f_1$ and $f_2$ appropriately concentrated around $\xi_0$ and $\eta_0$, respectively. To do so, fix $\chi\in \mathcal C_0^\infty(\R^{n-1})$ supported in $\{x\in\R^{n-1}:|x|\leq 1\}$, such that $\chi\geq0$ and $\|\chi\|_{L^1(\R^{n-1})}=1$. Then just take an $L^1$-scaling for $\rho>0$:
\[
f_1^\rho(\xi) = \rho^{-(n-1)}\chi\left(\frac{\xi-\xi_0}{\rho}\right),\quad f_2^\rho(\eta) = \rho^{-(n-1)}\chi\left(\frac{\eta-\eta_0}{\rho}\right).
\]
It is easy to see that $\| f^\rho_1\|_{L^1(\R^{n-1})}=\| f^\rho_2\|_{L^1(\R^{n-1})}=\|\chi\|_{L^1(\R^{n-1})}=1$. Now, call 
\[
G( \xi,  \eta)=\widehat{F}(|\xi|^2-|\eta|^2, Q_2  \bs{\eta}-Q_1 \bs{\xi}),
\] 
and observe that
\begin{equation}\label{eq: concentration}
	\begin{aligned}
	\int_{\R^{n-1}\times\R^{n-1}}G(\xi, \eta)f_1^\rho (\xi)f_2^\rho(\eta) \dd \xi\dd \eta &= G(\xi_0,\eta_0)\int_{\R^{n-1}\times\R^{n-1}}f_1^\rho (\xi)f_2^\rho(\eta) \dd \xi \dd \eta \\
	&+ \int_{\R^{n-1}\times\R^{n-1}}\left[G(\xi,\eta)-G(\xi_0,\eta_0)\right] f_1^\rho (\xi)f_2^\rho(\eta) \dd \xi \dd \eta.
	\end{aligned}
\end{equation}
Clearly, the first summand in the RHS of \eqref{eq: concentration} is equal to $G(\xi_0,\eta_0)=\widehat F(\tau_0,\sigma_0)$. 

The second summand can be bounded by the quantity
\[
\gamma(\rho)=\sup_{\substack{|\xi-\xi_0|<\rho\\|\eta-\eta_0|<\rho}}\left|G(\xi,\eta)-G(\xi_0,\eta_0)\right|\leq\sup_{\substack{|\tau-\tau_0|<2\rho^2\\|\sigma-\sigma_0|<2\rho}}\left|\widehat F(\tau,\sigma)-\widehat F(\tau_0,\sigma_0)\right|.
\]
Therefore, we get that
\begin{align*}
\gamma(\rho)&\leq\sup_{\substack{|\tau-\tau_0|<2\rho^2\\|\sigma-\sigma_0|<2\rho}}\left|\int_{\R\times\R^n}F(t,x)\left[e^{-i(t\tau+x\cdot\sigma)}-e^{-i(t\tau_0+x\cdot\sigma_0)}\right]\dd t\,\dd x\right|\\
&\leq \left(\int_{\R\times\R^n}(1+|t|+|x|)\left|F(t,x)\right|\dd t\,\dd x\right)
\left(\sup_{\substack{(t,x)\in \R\times \R^n\\|\tau - \tau_0|<2\rho^2\\|\sigma-\sigma_0|<2\rho}}\frac{\left|e^{-i(t(\tau-\tau_0)
+x\cdot (\sigma-\sigma_0))}-1\right|}{1+|t|+|x|}\right).
\end{align*}
On the one hand, since $F=0$ if $(t, x) \notin \Sigma$, we have that
\[
\int_{\R\times\R^n}(1+|t|+|x|)\left|F(t,x)\right|\dd t\,\dd x\leq T \|F\|_{L^1(\R \times \R^n)} + \|e^{|x|}F\|_{L^\infty((0, T)\times\R^n)}\int_{\R^n}e^{-|x|}(1+|x|)\dd x<\infty,
\]
as long as $F$ has exponential decay in $L^\infty$, which is true for $F=\wt{V}_2-\wt{V}_1$. So, a computation analogous to \eqref{eq: complex_exp_bound}, shows that
\[
\frac{\left|e^{-i(t(\tau-\tau_0)+x\cdot (\sigma-\sigma_0))}-1\right|}{1+|t|+|x|} 
\leq \frac{|t||\tau-\tau_0|+|x||\sigma-\sigma_0|}{1+|t|+|x|} 
\leq |\tau-\tau_0|+ |\sigma-\sigma_0|.
\]
Hence, 
\begin{equation}\label{eq: modulus_time_dep}
\gamma(\rho)\lesssim 
\sup_{\substack{|\tau-\tau_0|<2\rho^2\\|\sigma-\sigma_0|<2\rho}} \big(|\tau-\tau_0|+ |\sigma-\sigma_0| \big)<2\rho^2+2\rho 
\lesssim \rho
\end{equation}
whenever $\rho< 1$. Thus \eqref{eq: concentration} and \eqref{eq: modulus_time_dep} yield
\begin{equation}\label{eq: FT_time_dep}
	\left|\widehat F(\tau_0,\sigma_0)\right|\lesssim\left|\int_{ \R \times\R^n} Fu^\sharp_1\ovl{u^\sharp_2}\right| + \rho
	=\left| \int_{(0, T) \times \R^n} (V_1-V_2) u^\sharp_1 \ovl{u^\sharp_2}\right| + \rho,
\end{equation}
where the implicit constant depends on $T$ and $F$. Of course here and in the forthcoming proof of the Theorem \ref{th: stability_t_dependent}, $u^\sharp_j$ depends on $\rho$ as
\[
u_j^\sharp(t,x)=[e^{it\Delta'}\check {f_j^\rho}](Q_je_1\cdot x,\ldots, Q_je_{n-1}\cdot x) , \quad \text{for} \, \, j\in \{1, 2\}.
\]

%%%%%%%%%%%%%%%%%%
%
%%%%%%%%%%PROOF_STABILITY_TIME%%%%%%%%%%%
\noindent{\bf Proof of Theorem \ref{th: stability_t_dependent}:} 
For any $(\tau,\sigma)\in \R\times\left(\R^n\setminus\{0\}\right)$ and $0<\rho<1$, choose $f_1^\rho$ and $f_2^\rho$ concentrating around points $\xi$ and $\eta$ such that \[
\xi=-\frac{1}{2}\left(1+\frac{\tau}{|\sigma|^2}\right)Q_1^T\sigma, \quad   \eta=\frac{1}{2}\left(1-\frac{\tau}{|\sigma|^2}\right)Q_2^T\sigma,
\] 
as above, and choose solutions as in Lemma \ref{lem: stability_tDep_1} with $f_j= f_j^\rho$, $j=1,2$. 
Recall that $s=|\nu|$ and $F=\wt{V}_1-\wt{V}_2$, where $\wt{V}_j$ denotes the extension of $V_j$ to $\R\times \R^n$. From \eqref{eq: FT_time_dep} and \eqref{eq: stability_tDep_1} we infer that
\[
	\left|\widehat F(\tau,\sigma)\right|\lesssim \left(\dist_{\delta,\ve}(\cal{U}_T^1, \cal{U}_T^2)e^{2\ve s^{(1+\ve)/\ve}}+s^{-1/2}\right)\|f^\rho_1\|_{L^2(\R^{n-1})} \|f^\rho_2\|_{L^2(\R^{n-1})} + \rho.
\]
Note that the concentration in $L^1$ as $\rho\to 0$ will make the $L^2$ norm of $f^\rho_j$ grow: 
\[
\|f_j^\rho\|_{L^2(\R^{n-1})}=\rho^{-(n-1)/2},
\]
so that
\[
	\left|\widehat F(\tau,\sigma)\right|\lesssim \left(\dist_{\delta,\ve}(\cal{U}_T^1, \cal{U}_T^2)e^{2\ve s^{(1+\ve)/\ve}}+s^{-1/2}\right)\rho^{-(n-1)} + \rho.
\]
From now on, we will choose the approximation rate $\rho$ as a function of $s$, in particular $\rho=s^{-1/2n}$, which yields
\begin{equation}\label{eq: FT_stability_time_dep}
	\left|\widehat F(\tau,\sigma)\right|\lesssim \dist_{\delta,\ve}(\cal{U}_T^1, \cal{U}_T^2)s^{(n-1)/2n}e^{2\ve s^{(1+\ve)/\ve}}+s^{-1/2n}.
\end{equation}
Now, choose some $R>0$ and denote $\xi=(\tau,\sigma)$ to improve readability. We can estimate the $H^{-1}$ norm of $V_1-V_2$ as follows:
\begin{equation}\label{eq: H_1_time_dep}
\begin{aligned}
	\|V_1-V_2\|^2_{H^{-1}(\R^{n+1})}&=\int_{\R^{n+1}}\frac{|\widehat F(\xi)|^2}{1+|\xi|^2}\,\dd \xi=\left(\int_{|\xi|\leq R}\dd \xi + \int_{|\xi|> R}\dd \xi \right) \frac{|\widehat F(\xi)|^2}{1+|\xi|^2} \\
	&\lesssim \left(\dist_{\delta,\ve}(\cal{U}_T^1, \cal{U}_T^2)^2 s^{(n-1)/n}e^{4\ve s^{(1+\ve)/\ve}}+s^{-1/n} \right) R^{n+1}+R^{-2}\|V_1-V_2\|^2_{L^2(\R^{n+1})}.
\end{aligned}
\end{equation}
Here we have used \eqref{eq: FT_stability_time_dep} to bound the first term, noting that the estimate \eqref{eq: FT_stability_time_dep} is in principle true for all $\xi\in\R^{n+1}$ except for a set of measure $0$. Moreover, the super-exponential decay of $V_j$ implies that 
\[
	\|V_1-V_2\|_{L^2(\R^{n+1})}^2\leq T \|e^{|x|}(V_1-V_2)\|_{L^\infty(\R\times\R^n)}^2 \int_{\R^n} e^{-|x|}\dd x <\infty.
\]
Using this and choosing $R=s^{1/(n(n+3))}$ so that $R^{-2}=R^{n+1}s^{-1/n}$, turn \eqref{eq: H_1_time_dep} into the estimate
\[
\|V_1-V_2\|^2_{H^{-1}(\R^{n+1})}\lesssim \dist_{\delta,\ve}(\cal{U}_T^1, \cal{U}_T^2)^2 s^{1-2/(n(n+3))}e^{4\ve s^{(1+\ve)/\ve}}+s^{-2/(n(n+3))}.
\]
So, for $s$ big enough so that $s^{1-2/(n(n+3))}\leq e^{\ve s^{(1+\ve)/\ve}}$ it holds that 
\[
\|V_1-V_2\|^2_{H^{-1}(\R^{n+1})}\lesssim \dist_{\delta,\ve}(\cal{U}_T^1, \cal{U}_T^2)^2 e^{5\ve s^{(1+\ve)/\ve}}+s^{-2/(n(n+3))}.
\]
Then, if we choose $s=\left((5\ve)^{-1}\big|\log \big( \dist_{\delta,\ve}(\cal{U}_T^1, \cal{U}_T^2) \big) \big|\right)^{\ve/(1+\ve)}$ for $\dist_{\delta,\ve}(\cal{U}_T^1, \cal{U}_T^2)$ sufficiently small, we arrive to the estimate
\[
\|V_1-V_2\|^2_{H^{-1}(\R^{n+1})}\lesssim \dist_{\delta,\ve}(\cal{U}_T^1, \cal{U}_T^2)
+\left((5\ve)^{-1}\big|\log \big(\dist_{\delta,\ve}(\cal{U}_T^1, \cal{U}_T^2)\big)\big|\right)^{-2\ve/((n(n+3))(1+\ve))}.
\]
The statement of Theorem \ref{th: stability_t_dependent} follows from here, since we care for distances  
$0<\dist_{\delta,\ve}(\cal{U}_T^1, \cal{U}_T^2)<1$, in which case 
$\dist_{\delta,\ve}(\cal{U}_T^1, \cal{U}_T^2) \leq 
\left((5\ve)^{-1}\big|\log \big(\dist_{\delta,\ve}(\cal{U}_T^1, \cal{U}_T^2)\big)\big|\right)^{-2\ve/((n(n+3))(1+\ve))}$.
\qed

\begin{remark}
	It might be interesting to note that the choice of $s=\left((5\ve)^{-1}\big|\log\big(\dist_{\delta,\ve}(\cal{U}_T^1, \cal{U}_T^2)\big) \big|\right)^{\ve/(1+\ve)}$ forces $\dist_{\delta,\ve}(\cal{U}_T^1, \cal{U}_T^2)$ to be small enough so that $s$ is big enough for $s^{1-2/(n(n+3))}\leq e^{4\ve s^{(1+\ve)/\ve}}$ to hold.

	As we already noted in Remark \ref{remark1}, the limit $\ve\to 0$ makes the stability estimate \eqref{eq: stability1_est_t_dependent} worse. However, it allows us to admit bigger values of $\dist_{\delta,\ve}(\cal{U}_T^1, \cal{U}_T^2)$ for which the estimate is true, since the exponent $\ve s^{(1+\ve)/\ve}$ grows as $\ve\to 0$.
\end{remark}

%%%%%%%%%%%%%%%%%%%%%%%%%%%%%% ACKNOWLEDGMENTS ACKNOWLEDGMENTS ACKNOWLEDGMENTS
\sloppy
\begin{acknowledgements}
The authors warmly thank Pedro Caro for sharing his insights on this problem and for the useful conversations. 

\vspace{0.2cm}

\noindent  T. Zacharopoulos is supported by the grant 10.46540/3120-00003B from Independent Research Fund Denmark. 
The authors are partially supported by grant PID2024-156267NB-I00 funded by MICIU/AEI/10.13039/501100011033 and cofunded by the European Union.
\end{acknowledgements}
%%%%%%%%%%%%%%%%%%%%%%%%%%%%%% ACKNOWLEDGMENTS ACKNOWLEDGMENTS ACKNOWLEDGMENTS

%%%%%%%%%%%%%%%%%%%%%%%%%%%%%% BIBLIOGRAPHY BIBLIOGRAPHY BIBLIOGRAPHY
\bibliography{references}{}

\begin{thebibliography}{10}

\bibitem{Agmon1975}
Shmuel Agmon.
\newblock Spectral properties of schrödinger operators and scattering theory.
\newblock {\em Annali della Scuola Normale Superiore di Pisa - Classe di Scienze}, 2(2):151--218, 1975.

\bibitem{Alessandrini01011988}
Giovanni Alessandrini.
\newblock Stable determination of conductivity by boundary measurements.
\newblock {\em Applicable Analysis}, 27(1-3):153--172, 1988.

\bibitem{Alessandrini2005}
Giovanni Alessandrini and Sergio Vessella.
\newblock Lipschitz stability for the inverse conductivity problem.
\newblock {\em Advances in Applied Mathematics}, 35(2):207–241, August 2005.

\bibitem{Baudouin2002}
Lucie Baudouin and Jean-Pierre Puel.
\newblock Uniqueness and stability in an inverse problem for the schr dinger equation.
\newblock {\em Inverse Problems}, 18(6):1537–1554, October 2002.

\bibitem{Bellassoued2017}
Mourad Bellassoued.
\newblock Stable determination of coefficients in the dynamical schr\"{o}dinger equation in a magnetic field.
\newblock {\em Inverse Problems}, 33(5):055009, March 2017.

\bibitem{Bellassoued2010}
Mourad Bellassoued and David Dos~Santos Ferreira.
\newblock Stable determination of coefficients in the dynamical anisotropic schr\"{o}dinger equation from the dirichlet-to-neumann map.
\newblock {\em Inverse Problems}, 26(12):125010, November 2010.

\bibitem{BenAcha2017}
Ibtissem Ben~Aïcha.
\newblock Stability estimate for an inverse problem for the schr\"{o}dinger equation in a magnetic field with time-dependent coefficient.
\newblock {\em Journal of Mathematical Physics}, 58(7), 2017.

\bibitem{zbMATH08122191}
Manuel Ca{\~n}izares, Pedro Caro, Ioannis Parissis, and Thanasis Zacharopoulos.
\newblock The initial-to-final-state inverse problem with time-independent potentials.
\newblock {\em Inverse Probl.}, 41(11):19, 2025.
\newblock Id/No 115001.

\bibitem{arXiv:2602.12122}
Manuel Ca{\~n}izares, Pedro Caro, Ioannis Parissis, and Thanasis Zacharopoulos.
\newblock The initial-to-final-state inverse problem with critically-singular potentials.
\newblock Preprint, {arXiv}:2602.12122 [math.{AP}] (2026), 2026.

\bibitem{Caro2011}
Pedro Caro.
\newblock On an inverse problem in electromagnetism with local data: stability and uniqueness.
\newblock {\em Inverse Problems and Imaging}, 5(2):297–322, 2011.

\bibitem{Caro2013}
Pedro Caro, Andoni García, and Juan~Manuel Reyes.
\newblock Stability of the calderón problem for less regular conductivities.
\newblock {\em Journal of Differential Equations}, 254(2):469–492, January 2013.

\bibitem{zbMATH07801151}
Pedro Caro and Alberto Ruiz.
\newblock An inverse problem for data-driven prediction in quantum mechanics.
\newblock {\em J. Math. Phys.}, 65(1):28, 2024.
\newblock Id/No 011505.

\bibitem{arXiv:2512.04796}
Pedro Caro and Alberto Ruiz.
\newblock The initial-to-final-state inverse problem with unbounded potentials and {Strichartz} estimates.
\newblock Preprint, {arXiv}:2512.04796 [math.{AP}] (2025), 2025.

\bibitem{HormII}
Lars H\"ormander.
\newblock {\em The analysis of linear partial differential operators. {II}}, volume 257 of {\em Grundlehren der mathematischen Wissenschaften [Fundamental Principles of Mathematical Sciences]}.
\newblock Springer-Verlag, Berlin, 1983.
\newblock Differential operators with constant coefficients.

\bibitem{zbMATH02204588}
Alexandru~D. Ionescu and Carlos~E. Kenig.
\newblock Well-posedness and local smoothing of solutions of {Schr{\"o}dinger} equations.
\newblock {\em Math. Res. Lett.}, 12(2-3):193--205, 2005.

\bibitem{Kian2019}
Yavar Kian and Eric Soccorsi.
\newblock H\"{o}lder stably determining the time-dependent electromagnetic potential of the schr\"{o}dinger equation.
\newblock {\em SIAM Journal on Mathematical Analysis}, 51(2):627–647, January 2019.

\bibitem{Lai2024}
Ru-Yu Lai, Xuezhu Lu, and Ting Zhou.
\newblock Partial data inverse problems for the nonlinear time-dependent schr\"{o}dinger equation.
\newblock {\em SIAM Journal on Mathematical Analysis}, 56(4):4712–4741, 2024.

\bibitem{Rudin}
W.~Rudin.
\newblock {\em Real and complex analysis/}.
\newblock McGraw-Hill,, New York:, 3rd edition, 1987.

\bibitem{Stefanov1990}
Plamen Stefanov.
\newblock Stability of the inverse problem in potential scattering at fixed energy.
\newblock {\em Annales de l’Institut Fourier}, 40(4):867–884, 1990.

\end{thebibliography}
\bibliographystyle{plain}

\end{document}